\newtheorem{thm}{Theorem}[section]
 \newtheorem{cor}{Corollary}[section]
 \newtheorem{lem}{Lemma}[section]
 \newtheorem{prop}{Proposition}[section]
 \newtheorem{defn}{Definition}[section]
\theoremstyle{remark}
\newtheorem{rem}{Remark}[section]
\title{Global Well-posedness in Critical Besov Spaces for Two-fluid Euler-Maxwell Equations}
\author{}
\author{Jiang Xu\thanks {E-mail: jiangxu\underline{ }79@yahoo.com.cn}\\
\small{\textit{Department of Mathematics}},
\\ \small{\textit{Nanjing
University of Aeronautics and Astronautics}},
 \small{\textit{Nanjing 210016, China}}\\[5mm]Jun Xiong\thanks {E-mail: junx\underline{ }87@yahoo.cn}\\
\small{\textit{Department of Mathematics}},\\\small{\textit{Nanjing
University of Aeronautics and Astronautics}},
\small{\textit{Nanjing 211106, China}}\\[5mm]
Shuichi Kawashima\thanks{E-mail: kawashim@math.kyushu-u.ac.jp}\\
\small{\textit{Graduate School of Mathematics}},\\
\small{\textit{Kyushu University, Fukuoka 812-8581, Japan}}}
\date{}
\begin{document}
\maketitle{} \begin{abstract} In this paper, we study the
well-posedness in critical Besov spaces for two-fluid Euler-Maxwell
equations, which is different from the one-fluid case. We need to
deal with the difficulties mainly caused by the nonlinear coupling
and cancelation between two carriers. Precisely, we first obtain the
local existence and blow-up criterion of classical solutions to the
Cauchy problem and periodic problem pertaining to data in Besov
spaces with critical regularity. Furthermore, we construct the
global existence of classical solutions with aid of a different
energy estimate (in comparison with the one-fluid case) provided the
initial data is small under certain norms. Finally, we establish the
large-time asymptotic behavior of global solutions near equilibrium
in Besov spaces with relatively lower regularity.
\end{abstract}

\hspace{-0.5cm}\textbf{Keywords.} \small{two-fluid Euler-Maxwell
equations, classical solutions, Chemin-Lerner spaces}\\

\hspace{-0.5cm}\textbf{AMS subject classification:} \small{35L45,\
76N15,\ 35B25}

\section{Introduction}
As an un-magnetized plasma is operated under some high frequency
conditions (such as photoconductive switches, electro-optics and
high-speed computers, etc.), electromagnetic fields are generated by
moving electrons and ions, then the two carriers transport interacts
with the propagating magnetic waves. In this case, the transport
process is typically governed by Euler-Maxwell equations, which take
the form of Euler equations for the conservation laws of mass
density and current density for carriers, coupled to Maxwell's
equations for self-consistent electromagnetic fields. By some
appropriate re-scaling, the two-fluid compressible Euler-Maxwell
equations are written, in nondimensional form, as (see, e.g.,
\cite{MRS})
\begin{equation}
\left\{
\begin{array}{l}
\partial_{t}n_{\pm}+\nabla\cdot(n_{\pm}u_{\pm})=0,\\
\partial_{t}(n_{\pm}u_{\pm})+\nabla\cdot(n_{\pm}u_{\pm}\otimes u_{\pm})+\nabla p_{\pm}(n_{\pm})
=\mp n_{\pm}(E+\varepsilon u_{\pm}\times B)-n_{\pm}u_{\pm}/\tau_{\pm},\\
\varepsilon\lambda^2\partial_{t}E-\nabla\times B=\varepsilon(n_{+}u_{+}-n_{-}u_{-}),\\
 \varepsilon\partial_{t}B+\nabla\times E=0,\\
 \lambda^2\nabla\cdot E=n_{-}-n_{+},\ \ \ \nabla\cdot B=0.
 \end{array}
\right.\label{R-E1}
\end{equation}
for $(t,x)\in[0,+\infty)\times\Omega(\Omega=\mathbb{R}^{N}$ or
$\mathbb{T}^{N},\ N=2,3)$. Here the unknowns
$n_{\pm}=n_{\pm}(t,x)>0, u_{\pm}=u_{\pm}(t,x)\in\Omega$,
respectively, stand for densities and velocities of the electrons
(+) and ions $(-)$. $E=E(t,x)\in\Omega$ and $B=B(t,x)\in\Omega$
denote the electric field and magnetic field, respectively. The
pressure functions $p_{\pm}(\cdot)$ satisfy the usual $\gamma$-law:
$p_{\pm}(n_{\pm})=A_{\pm}n_{\pm}^{\gamma}$, where $A_{\pm}>0$ are
some physical constants and the adiabatic exponent $\gamma\geq1$.
$\tau_{\pm}$ are the (scaled) constants for the momentum-relaxation
timed of electrons and ions, and $\lambda>0$ is the Debye length.
$c=(\epsilon_{0}\upsilon_{0})^{-\frac{1}{2}}>0$ is the speed of
light, where $\epsilon_{0}$ and $\upsilon_{0}$ are the vacuum
permittivity and permeability. Setting $\varepsilon=\frac{1}{c}$.
The parameters $\tau_{\pm},\lambda$ and $\varepsilon$ arising from
nondimensionalization are independent each other, and they are
assumed to be very small compared to the reference physical size. In
this paper, we set these physical constants to be one.

It is not difficult to see  below in the text that the Euler-Maxwell
equations (\ref{R-E1}) consist of a quasi-linear symmetrizable
hyperbolic system, the main feature of which is the finite time
blow-up of classical solutions even when the initial data are smooth
and small. Hence, the qualitative study and device simulation of
(\ref{R-E1}) are far from trivial. The primary objective of this
paper is to establish the global well-posedness for the
corresponding Cauchy problem and periodic problem. For this purpose,
(\ref{R-E1}) is equipped with the following initial data
\begin{equation}(n_{\pm},u_{\pm},E,B)(x,0)=(n_{\pm0},u_{\pm0},E_{0},B_{0})(x)\label{R-E2}
\end{equation}
satisfying the compatible conditions
\begin{equation}\nabla\cdot E_{0}=n_{-0}-n_{+0},\ \ \ \nabla\cdot B_{0}=0,\ \
 x\in\Omega.\label{R-E3}
\end{equation}

In the past years, the Euler-Maxwell equations have attached much
attention. In one space dimension, using the Godunov scheme with the
fractional step together with the compensated compactness theory,
Chen, Jerome and Wang \cite{CJW} constructed the existence of a
global weak solution to the initial boundary value problem for
arbitrarily large initial data in $L^\infty$. Assuming initial data
in Sobolev spaces $H^s(\mathbb{R}^3)$ with higher regularity
$s>5/2$, a local existence theory of smooth solutions for the Cauchy
problem of Euler-Maxwell equations was established in \cite{J} by
the author's modification of the classical semigroup-resolvent
approach of Kato \cite{K}. Subsequently, the global existence and
the large time behavior of smooth solutions with small perturbations
were obtained by Peng, Wang and Gu \cite{PWG}, Duan \cite{DRJ,DLZ},
Ueda, Wang and Kawashima \cite{UK,UWK}. In addition, the asymptotic
limits such as the non-relativistic limit
$(\varepsilon\rightarrow0)$, the quasi-neutral limit
$(\lambda\rightarrow0)$ and the combined non-relativistic and
quasi-neutral limits $(\varepsilon=\lambda\rightarrow0)$ have been
justified by Peng and Wang \cite{PW1,PW2,PW3}. The reader is also
referred to \cite{XX} for combined diffusive relaxation limits and
\cite{T1,T2} for WKB asymptotics; and references therein.

Up to now, the study for Euler-Maxwell equations in several
dimensions are still far from well known in the framework of
critical spaces. Recently, using the low- and high-frequency
decomposition arguments, we constructed uniform (global) classical
solutions (around constant equilibrium) to the Cauchy problem of
one-fluid Euler-Maxwell system in Chemin-Lerner spaces with critical
regularity. Furthermore, based on the Aubin-Lions compactness lemma,
it is justified that the (scaled) classical solutions converge
globally in time to the solutions of compressible Euler-Poisson
equations in the process of nonrelativistic limit and to that of
drift-diffusion equations under the relaxation limit or the combined
nonrelativistic and relaxation limits, see \cite{X2}.

In the present paper, we extend those results in \cite{X2} to the
two-fluid Euler-Maxwell equations (\ref{R-E1}). More precisely, we
consider the perturbation near the constant equilibrium state
$(1,0,1,0,0,\bar{B})(\bar{B}\in \Omega)$ which is a particular
solution of the system (\ref{R-E1})-(\ref{R-E2}), and achieve local
well-posedness for general data and global well-posedness for small
data. It should be pointed out that (\ref{R-E1}) is different from
the one-fluid case and this extension is not trivial. We are faced
with new difficulties arising from the more complicated nonlinear
coupling and cancelation between two carriers. For instance, the
expected dissipation rates for the densities of electrons and ions
are absent in whole space $\mathbb{R}^N$, and we only capture the
\textit{weaker} dissipation ones from contributions of $(\nabla
n_{+},\nabla n_{-})$ and $n_{+}-n_{-}$. Therefore, in order to close
the ``\textit{a priori}" estimates in critical spaces, new
techniques in comparison with \cite{X2} are adopted. Indeed, we
perform the \textit{homogeneous} blocks rather than the
\textit{inhomogeneous} blocks to localize the symmetric system, as
one captures the dissipation rate for velocities. Furthermore, the
elementary fact established in the recent work \cite{XK}, which
indicates the relations between homogeneous Chemin-Lerner spaces and
inhomogeneous Chemin-Lerner spaces, will been used. In addition,
different from that in \cite{X2}, we modify the nonlinear smooth
function arising from the symmetrization a little such that
$h(0)=h'(0)=0$, then we take full advantage of the continuity for
compositions in space-time Besov spaces (Chemin-Lerner spaces) which
is a natural generalization from Besov spaces to Chemin-Lerner
spaces, to estimate the cancelation of densities between two
carriers effectively. For above details, see Sect. \ref{sec:3},
Lemma \ref{lem4.1}-\ref{lem4.2} and Proposition
\ref{prop5.1}-\ref{prop5.2}.

To state main results more explicitly, we first introduce the
functional spaces
$$\widetilde{\mathcal{C}}_{T}(B^{s}_{p,r}(\Omega)):=\widetilde{L}^{\infty}_{T}(B^{s}_{p,r}(\Omega))\cap\mathcal{C}([0,T],B^{s}_{p,r}(\Omega))
$$ and $$\widetilde{\mathcal{C}}^1_{T}(B^{s}_{p,r}(\Omega)):=\{f\in\mathcal{C}^1([0,T],B^{s}_{p,r}(\Omega))|\partial_{t}f\in\widetilde{L}^{\infty}_{T}(B^{s}_{p,r}(\Omega))\},$$
where the index $T>0$ will be omitted when $T=+\infty$, the reader
is referred to Definition \ref{defn2.1} below for Chemin-Lerner
spaces.

Throughout this paper, let us denote by $s_{c}$ the critical number
$1+N/2$. First of all, we give the local existence and blow-up
criterion of classical solutions to (\ref{R-E1})-(\ref{R-E2}) away
from the vacuum.
\begin{thm}\label{thm1.1}
Let $\bar{B}\in\Omega$ be any given constant. Suppose that
$n_{\pm0}-1, u_{\pm0}, E_{0}$ and $B_{0}-\bar{B}\in
B^{s_{c}}_{2,1}(\Omega)$ satisfy $n_{\pm0}>0$ and the compatible
conditions (\ref{R-E3}). Then there exists a time $T_{0}>0$  such
that
\begin{itemize}
\item[(i)] Existence: the
system (\ref{R-E1})-(\ref{R-E2})  has a unique solution
$(n_{\pm},u_{\pm},E,B)\in\mathcal{C}^{1}([0,T_{0}]\times \Omega)$
with $n_{\pm}>0$ for all $t\in[0,T_{0}]$ and
$(n_{\pm}-1,u_{\pm},E,B-\bar{B})\in\widetilde{\mathcal{C}}_{T_{0}}(B^{s_{c}}_{2,1}(\Omega))\cap
\widetilde{\mathcal{C}}^1_{T_{0}}(B^{s_{c}-1}_{2,1}(\Omega))$;
\item[(ii)]Blow-up criterion: if the maximal time $T^{*}(>T_{0})$ of existence of such a solution
is finite, then
$$\limsup_{t\rightarrow T^{*}}\|n_{\pm}(t,\cdot)-1,u_{\pm}(t,\cdot),E(t,\cdot),B(t,\cdot)-\bar{B}\|_{B^{s_{c}}_{2,1}(\Omega)}=\infty$$
if and only if $$\int^{T^{*}}_{0}\|(\nabla n_{\pm},\nabla
u_{\pm},\nabla E,\nabla
B)(t,\cdot)\|_{L^{\infty}(\Omega)}dt=\infty.$$
\end{itemize}
\end{thm}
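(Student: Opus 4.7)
The plan is the classical symmetrization--iteration--convergence scheme adapted to the critical Chemin--Lerner space $\widetilde{\mathcal C}_T(B^{s_c}_{2,1})$. Setting $\rho_\pm=n_\pm-1$ and $\widetilde B=B-\bar B$ and dividing the momentum equations by $n_\pm$, I would select smooth functions $h_\pm$ vanishing to second order at $0$ (so $h_\pm(0)=h_\pm'(0)=0$) such that the substitution $v_\pm=\sqrt{p'_\pm(1)}\,\rho_\pm+h_\pm(\rho_\pm)$ puts the two Euler blocks into Friedrichs-symmetric form. Coupled with the already-symmetric Maxwell block this yields, in the unknown $U=(v_+,v_-,u_+,u_-,E,\widetilde B)$, a symmetric hyperbolic system
$$\partial_t U+\sum_{j=1}^N A^j(U)\,\partial_j U=L\,U+Q(U),$$
with $A^j$ smooth and symmetric, $L$ a constant matrix gathering the Lorentz--relaxation--Maxwell couplings, and $Q$ a smooth nonlinearity with $Q(0)=Q'(0)=0$. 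The second-order vanishing of $Q$, engineered via the modification of $h_\pm$, is what will later allow the composition estimate in $B^{s_c}_{2,1}$ to absorb the cross-species coupling $n_--n_+$ without a derivative loss.

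Define $U^0\equiv 0$ and let $U^{k+1}\in\widetilde{\mathcal C}_T(B^{s_c}_{2,1})$ solve the linear Cauchy problem $\partial_t U^{k+1}+\sum_j A^j(U^k)\partial_j U^{k+1}=LU^{k+1}+Q(U^k)$ with $U^{k+1}(0)=U_0$, obtained by Friedrichs regularization. Localizing by the homogeneous dyadic blocks $\dot\Delta_q$, combining the Bahouri--Chemin--Danchin commutator bound
$$\bigl\|2^{qs_c}\|[A^j(U^k)\partial_j,\dot\Delta_q]U^{k+1}\|_{L^2_x}\bigr\|_{\ell^1_q}\lesssim\|\nabla U^k\|_{L^\infty}\|U^{k+1}\|_{B^{s_c}_{2,1}}+\|\nabla U^{k+1}\|_{L^\infty}\|U^k\|_{B^{s_c}_{2,1}}$$
with Moser-type product and composition estimates for $Q(U^k)$ (valid because $Q(0)=Q'(0)=0$), and with the identification between homogeneous and inhomogeneous Chemin--Lerner norms from \cite{XK}, one obtains a Gronwall-type bound $\|U^{k+1}\|_{\widetilde L^\infty_T B^{s_c}_{2,1}}\le C(\|U_0\|_{B^{s_c}_{2,1}}+T\,\Phi(M_k))\exp(CT\,\Phi(M_k))$ with $M_k=\|U^k\|_{\widetilde L^\infty_T B^{s_c}_{2,1}}$. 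Choosing $T_0$ small in terms of $\|U_0\|_{B^{s_c}_{2,1}}$ alone gives a uniform bound $M_k\le M$. Running the analogous estimate on the differences $U^{k+1}-U^k$ in the one-derivative-weaker space $B^{s_c-1}_{2,1}$ yields a contraction, so $(U^k)$ converges strongly in $\widetilde L^\infty_{T_0}(B^{s_c-1}_{2,1})$; the Fatou property of $B^{s_c}_{2,1}$ lifts the limit back into $\widetilde L^\infty_{T_0}(B^{s_c}_{2,1})$, and using the equation to control $\partial_t U$ provides the time continuity, delivering $U\in\widetilde{\mathcal C}_{T_0}(B^{s_c}_{2,1})\cap\widetilde{\mathcal C}^1_{T_0}(B^{s_c-1}_{2,1})$. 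Positivity $n_\pm>0$ persists on $[0,T_0]$ via the embedding $B^{s_c}_{2,1}\hookrightarrow L^\infty$ and continuity in time, while the constraints (\ref{R-E3}) propagate since their time derivatives vanish by Amp\`ere's and Faraday's laws. Uniqueness follows from the analogous difference estimate in $B^{s_c-1}_{2,1}$ combined with Gronwall.

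For the blow-up criterion, the direction ``finite Lipschitz integral $\Rightarrow$ no blow-up in $B^{s_c}_{2,1}$'' is the heart of the matter: running the previous estimate directly on the exact solution $U$ while tracking the dependence on $\|\nabla U\|_{L^\infty}$ yields
$$\|U(t)\|_{B^{s_c}_{2,1}}\le\|U_0\|_{B^{s_c}_{2,1}}\exp\!\Bigl(C\!\int_0^t\!\bigl(1+\|(\nabla n_\pm,\nabla u_\pm,\nabla E,\nabla B)(\tau)\|_{L^\infty}\bigr)d\tau\Bigr),$$
so finiteness of the Lipschitz integral up to $T^*$ keeps $\|U\|_{B^{s_c}_{2,1}}$ bounded there, and the local existence can be restarted from a time arbitrarily close to $T^*$, contradicting its maximality. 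The reverse direction is immediate from the embedding $B^{s_c}_{2,1}\hookrightarrow W^{1,\infty}$. The main obstacle throughout is the \emph{critical} regularity $s_c=1+N/2$ itself: there is no derivative margin, so the commutator estimate must be sharp and $Q$ must be genuinely quadratic. This is precisely why one has to work in the homogeneous Chemin--Lerner framework (rather than the inhomogeneous one used in the one-fluid case \cite{X2}) and why the modified symmetrization $h_\pm(0)=h_\pm'(0)=0$ is introduced: without these two ingredients the cross-species cancellation $n_--n_+$ would generate a borderline loss that the $\ell^1$-summability of $B^{s_c}_{2,1}$ cannot absorb on its own.
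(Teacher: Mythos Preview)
Your proposal is correct and follows essentially the same route as the paper: symmetrize via a nonlinear density change, invoke the iteration/convergence scheme for symmetric hyperbolic systems in critical Chemin--Lerner spaces (the paper simply cites \cite{XK} for this step), and derive the blow-up criterion from the $\dot\Delta_q$-localized energy identity, the sharp commutator estimate of \cite{BCD}, and Gronwall, exactly as in the paper's Proposition~\ref{prop3.1}.

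One point of overreach worth flagging: you attribute the need for the homogeneous-block localization and the second-order vanishing $h_\pm'(0)=0$ to the \emph{local} theory, as if the cross-species term $n_--n_+$ would otherwise cause a borderline loss in Theorem~\ref{thm1.1}. The paper does not need either refinement here. In its proof of Proposition~\ref{prop3.1} the lower-order term $L(W)$ (which carries all the electromagnetic coupling and the cross-species interaction) is simply dropped---it is harmless for a local Gronwall argument---and the system is treated as a generic Friedrichs-symmetric hyperbolic system. The homogeneous blocks are used only to access the sharp commutator bound, not because of any two-fluid cancellation. The modifications you emphasize (homogeneous localization for the dissipation estimate, $\Phi'(0)=0$ to control $\Phi(\varrho_+)-\Phi(\varrho_-)$ via Proposition~\ref{prop5.1}) become essential only in Section~\ref{sec:4}, for the global a~priori estimate behind Theorem~\ref{thm1.2}. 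Your local proof still works, but it is carrying more baggage than necessary.
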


\begin{rem}
Recently, Xu and Kawashima \cite{XK} have established a general
theory on the well-posedness of generally symmetriable hyperbolic
systems in the framework of critical Chemin-Lerner spaces, which is
regarded as the generalization of the classical local existence
theory of Kato and Majda \cite{K,M}. As a matter of fact, the
results are also adapted to the periodic case. As in
Sect.~\ref{sec:3}, we see that (\ref{R-E1}) is transformed into a
symmetric hyperbolic system equivalently. Hence, the general theory
can be applied to the Euler-Maxwell equations. It is worth noting
that the blow-up criterion of classical solutions to the
Euler-Maxwell equations is obtained firstly in the present paper.
\end{rem}

In small amplitude regime, we establish the following global
well-posedness to (\ref{R-E1})-(\ref{R-E2}) in critical spaces.

\begin{thm}\label{thm1.2} Let $\bar{B}\in \Omega$ be any given constant.
Suppose that $(n_{\pm0}-1,u_{\pm0},E_{0},B_{0}-\bar{B})\in
B^{s_{c}}_{2,1}(\Omega)$ satisfy the compatible conditions
(\ref{R-E3}). There exists a positive constant $\delta_{0}$ such
that if
$$\|(n_{\pm0}-1,u_{\pm0},E_{0},B_{0}-\bar{B})\|_{B^{s_{c}}_{2,1}(\Omega)}\leq
\delta_{0},$$ then the system (\ref{R-E1})-(\ref{R-E2}) admits a
unique global solution $(n_{\pm},u_{\pm},E,B)$ satisfying
\begin{eqnarray*}
(n_{\pm},u_{\pm},E,B)\in \mathcal{C}^{1}([0,\infty)\times \Omega)
\end{eqnarray*}
and
$$(n_{\pm}-1,u_{\pm},E,B-\bar{B}) \in\widetilde{\mathcal{C}}(B^{s_{c}}_{2,1}(\Omega))\cap\widetilde{\mathcal{C}}^{1}(B^{s_{c}-1}_{2,1}(\Omega)).$$
Moreover, there are two positive constants $\mu_{0}$ and $C_{0}$
such that
\begin{itemize}
\item[(i)] when $\Omega=\mathbb{R}^N$, it yields the
following \begin{eqnarray}
&&\|(n_{\pm}-1,u_{\pm},E,B-\bar{B})\|_{\widetilde{L}^{\infty}(B^{s_{c}}_{2,1}(\Omega))}\nonumber\\&&
+\mu_{0}\Big\{\|(n_{+}-n_{-},u_{\pm})\|_{\widetilde{L}^{2}(B^{s_{c}}_{2,1}(\Omega))}
+\|(\nabla n_{\pm},E)\|_{\widetilde{L}^{2}(B^{s_{c}-1}_{2,1}(\Omega))}+\|\nabla B\|_{\widetilde{L}^{2}(B^{s_{c}-2}_{2,1}(\Omega))}\Big\}\nonumber\\
&\leq&
C_{0}\|(n_{\pm0}-1,u_{\pm0},E_{0},B_{0}-\bar{B})\|_{B^{s_{c}}_{2,1}(\Omega)};
\label{R-E4}
\end{eqnarray}

\item[(ii)]when $\Omega=\mathbb{T}^{N}$, we further set
$\bar{n}_{\pm0}=1$, it yields the following
\begin{eqnarray}
&&\|(n_{\pm}-1,u_{\pm},E,B-\bar{B})\|_{\widetilde{L}^{\infty}(B^{s_{c}}_{2,1}(\Omega))}\nonumber\\&&
+\mu_{0}\Big\{\|(n_{\pm}-1,u_{\pm})\|_{\widetilde{L}^{2}(B^{s_{c}}_{2,1}(\Omega))}
+\|E\|_{\widetilde{L}^{2}(B^{s_{c}-1}_{2,1}(\Omega))}+\|\nabla B\|_{\widetilde{L}^{2}(B^{s_{c}-2}_{2,1}(\Omega))}\Big\}\nonumber\\
&\leq&
C_{0}\|(n_{\pm0}-1,u_{\pm0},E_{0},B_{0}-\bar{B})\|_{B^{s_{c}}_{2,1}(\Omega)},
\label{R-E444}
\end{eqnarray}
\end{itemize}
where $\bar{f}$ denotes the mean value of $f(x)$ over
$\mathbb{T}^{N}$, that is,
$$\bar{f}=\frac{1}{|\mathbb{T}^{N}|}\int_{\mathbb{T}^{N}}f(x)dx.$$
\end{thm}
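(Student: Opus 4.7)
The plan is to combine the local existence from Theorem~\ref{thm1.1} with a global \emph{a priori} estimate and a standard continuation/bootstrap argument. Writing $U = (n_{\pm}-1, u_{\pm}, E, B-\bar{B})$ and letting $D(T)$ denote the sum of dissipation norms on the left-hand side of (\ref{R-E4}), the aim is to show
\[
X(T) := \|U\|_{\widetilde{L}^{\infty}_{T}(B^{s_{c}}_{2,1}(\Omega))} + \mu_{0}D(T) \leq C_{0}\|U_{0}\|_{B^{s_{c}}_{2,1}(\Omega)}
\]
as long as $\|U\|_{\widetilde{L}^{\infty}_{T}(B^{s_{c}}_{2,1}(\Omega))}$ remains small. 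Smallness of $\delta_{0}$ then closes a continuity argument which, together with the blow-up criterion of Theorem~\ref{thm1.1}(ii), extends the solution to $[0,+\infty)$; the regularity statement $(n_{\pm},u_{\pm},E,B)\in\mathcal{C}^{1}([0,\infty)\times\Omega)$ follows from Besov embedding at the index $s_{c}=1+N/2$.

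First I would pass to the symmetric hyperbolic reformulation of (\ref{R-E1}) constructed in Section~\ref{sec:3} and apply the \emph{homogeneous} Littlewood--Paley block $\dot{\Delta}_{q}$ to the linearization around equilibrium. A basic $L^{2}$ energy identity at frequency $q$ yields dissipation for $u_{\pm}$ directly from the relaxation $1/\tau_{\pm}$ and, after invoking Gauss's law, for the quasi-neutral combination $n_{+}-n_{-}$. To recover the weaker dissipations of $\nabla n_{\pm}$, $E$ and $\nabla B$ visible in (\ref{R-E4}), I would augment the basic energy by a Lyapunov functional built from cross products of the form
\[
\alpha\,\langle\dot{\Delta}_{q}\nabla n_{\pm},\dot{\Delta}_{q}u_{\pm}\rangle,\qquad \beta\,\langle\dot{\Delta}_{q}E,\dot{\Delta}_{q}(u_{+}-u_{-})\rangle,\qquad \gamma\,\langle\dot{\Delta}_{q}(\nabla\times B),\dot{\Delta}_{q}E\rangle,
\]
with small positive coefficients $\alpha,\beta,\gamma$ chosen hierarchically so that every induced cross-term has the right sign uniformly in $q$. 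The decreasing regularity hierarchy $\nabla B$ at index $s_{c}-2$, then $(\nabla n_{\pm},E)$ at $s_{c}-1$, then $(n_{+}-n_{-},u_{\pm})$ at $s_{c}$ in (\ref{R-E4}) directly reflects this cascade of extractions.

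The nonlinear contributions split into commutator terms $[\dot{\Delta}_{q},V\cdot\nabla]$ arising from the quasi-linear coefficients and genuinely quadratic couplings coming from $n_{\pm}u_{\pm}$, $u_{\pm}\times B$, and the symmetrized pressure; both are handled at the critical index $s_{c}$ by standard Chemin--Lerner paraproduct and commutator estimates. A crucial point stressed in the introduction is that the symmetrizing change of variables is tuned so that the smooth remainder $h$ satisfies $h(0)=h'(0)=0$; Lemmas~\ref{lem4.1}--\ref{lem4.2} then provide the composition bound
\[
\|h(V)\|_{\widetilde{L}^{2}_{T}(B^{s_{c}}_{2,1})}\lesssim\|V\|_{\widetilde{L}^{\infty}_{T}(B^{s_{c}}_{2,1})}\|V\|_{\widetilde{L}^{2}_{T}(B^{s_{c}}_{2,1})},
\]
which is exactly what allows the inter-species cancellation (where full individual $\nabla n_{\pm}$ dissipation is unavailable) to be absorbed by the $u_{\pm}$ dissipation. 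Summing the localized inequalities in $\ell^{1}_{q}$ and invoking the equivalence between homogeneous and inhomogeneous Chemin--Lerner norms from \cite{XK} yields $X(T)\leq C\|U_{0}\|_{B^{s_{c}}_{2,1}}+CX(T)^{2}$, closing the bootstrap.

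For the periodic case $\Omega=\mathbb{T}^{N}$, mass conservation together with the extra assumption $\bar{n}_{\pm 0}=1$ gives $\bar{n}_{\pm}(t)\equiv 1$, so Poincaré's inequality on zero-mean functions upgrades the quasi-neutral dissipation on $n_{+}-n_{-}$ to a dissipation on each $n_{\pm}-1$ individually, producing (\ref{R-E444}); the remainder of the argument is unchanged. The main obstacle, in comparison with the one-fluid case of \cite{X2}, lies entirely in the dissipation analysis on $\mathbb{R}^{N}$: Gauss's law detects only the density difference, so individual density dissipation must be recovered indirectly at the cost of one derivative, and the coefficients $\alpha,\beta,\gamma$ in the Lyapunov functional must be tuned simultaneously so that the mixed-regularity package in (\ref{R-E4}) is coercive at every dyadic frequency while still dominating the nonlinear errors controlled above.
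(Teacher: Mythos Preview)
Your outline follows the paper's proof closely: Proposition~\ref{prop4.1} (the \textit{a priori} estimate, built up through Lemmas~\ref{lem4.1}--\ref{lem4.4} by successively extracting dissipation of $\upsilon_{\pm}$, then $(\nabla\varrho_{\pm},\varrho_{+}-\varrho_{-})$, then $\tilde{E}$, then $\nabla\tilde{B}$ via exactly the cross terms you list) is combined with Proposition~\ref{prop3.1} in a standard bootstrap, and Poincar\'e handles $\mathbb{T}^{N}$.

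A few places where your description of the mechanism is off, though the ingredients you list would still close the argument. First, the dissipation of $\varrho_{+}-\varrho_{-}$ does \emph{not} come from the basic energy identity; it appears together with $\nabla\varrho_{\pm}$ from the cross product $\langle\Delta_{q}\upsilon_{\pm},\Delta_{q}\nabla\varrho_{\pm}\rangle$, once the $\tilde{E}$ term in the momentum equation is replaced via Gauss's law (see (\ref{R-E48})--(\ref{R-E50})). Second, the crucial composition input is not a single-variable bound on $h(V)$ but the \emph{difference} estimate of Proposition~\ref{prop5.1} applied to $\Phi(\varrho_{+})-\Phi(\varrho_{-})$ in (\ref{R-E57}); this is what lets the nonlinear remainder of Gauss's law be absorbed by the $\varrho_{+}-\varrho_{-}$ dissipation itself, not by the $u_{\pm}$ dissipation. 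Third, on $\mathbb{T}^{N}$ Poincar\'e upgrades the $\nabla n_{\pm}$ dissipation (not the $n_{+}-n_{-}$ dissipation) to control of $n_{\pm}-1$, via (\ref{R-E74})--(\ref{R-E76}). Finally, the paper uses homogeneous blocks $\dot{\Delta}_{q}$ only for the velocity step (Lemma~\ref{lem4.1}), pairing it with a separate $L^{2}$ estimate and the fact (\ref{R-E41}); the remaining cross-product lemmas are run with inhomogeneous $\Delta_{q}$ directly.
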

\begin{rem}
Following from approaches in the current paper, the well-posedness
results to the Cauchy problem and periodic problem pertaining to
data in the \textit{supercritical} Besov spaces
$B^s_{p,r}(\Omega)(s>s_{c}, p=2,\ 1\leq r\leq\infty)$ can be also
established. Furthermore, the fact that Sobolev spaces
$H^{s}(\Omega):=B^s_{2,2}(\Omega)$ allows the results to be also
true in the usual Sobolev spaces with $s>s_{c}$.
\end{rem}

\begin{rem}
In the whole space, the energy inequality (\ref{R-E4}) is not so
surprising in comparison with the one-fluid case as in \cite{X2},
however, it is indeed \textit{different}. Due to the nonlinear
coupling and cancelation between two carriers, the dissipation rates
of $(n_{+}, n_{-})$ does not appear in (\ref{R-E4}) any more, and
the dissipation rates from $n_{+}-n_{-}$ and $(\nabla n_{+},\nabla
n_{-})$ are available only. In this case, to overcome the technical
difficulties occurring in the \textit{a priori} estimates, some
useful facts in Chemin-Lerner spaces are developed. It is worth
noting that the dissipation rate of $(n_{+}, n_{-})$ itself in the
periodic case can be obtained, see the proof of Theorem
\ref{thm1.2}. Besides, from (\ref{R-E4})-(\ref{R-E444}), we see that
there is a ``1-regularity-loss" phenomenon for the dissipation rates
of electromagnetic field $(E,B)$.
\end{rem}

As a direct consequence of Theorem \ref{thm1.2}, we obtain the
large-time asymptotic behavior of global solutions near the
equilibrium state $(1,0,1,0,0,\bar{B})$ in some Besov spaces.

\begin{cor}\label{cor1.3}
Let $(n_{\pm},u_{\pm},E,B)$ be the global-in-time solution in
Theorem \ref{thm1.2}, it holds that ($\varepsilon>0$)
$$\|n_{+}(t,\cdot)-n_{-}(t,\cdot),u_{\pm}(t,\cdot)\|_{B^{s_{c}-\varepsilon}_{2,1}(\Omega)}\rightarrow
0,$$
$$\|E(t,\cdot)\|_{B^{s_{c}-1-\varepsilon}_{2,1}(\Omega)}\rightarrow
0, \ \
\|B(t,\cdot)-\bar{B}\|_{B^{s_{c}-2-\varepsilon}_{p,1}(\Omega)}\rightarrow
0, $$ moreover,
$$\|n_{\pm}(t,\cdot)-1\|_{B^{s_{c}-1-\varepsilon}_{p,1}(\mathbb{R}^{N})}\rightarrow
0 \ \ \Big(p=\frac{2N}{N-2}, \ N>2\Big), $$
$$\|n_{\pm}(t,\cdot)-1\|_{B^{s_{c}-\varepsilon}_{2,1}(\mathbb{T}^{N})}\rightarrow
0,$$ as the time variable $t\rightarrow +\infty$.
\end{cor}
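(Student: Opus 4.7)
The plan is to extract the claimed decays from the global energy estimates (\ref{R-E4}) and (\ref{R-E444}) of Theorem \ref{thm1.2} by a dyadic-block interpolation argument: the time integrability provided by the dissipation norms $\widetilde{L}^2(B^\sigma_{2,1})$ forces each low-frequency block to tend to zero, while the uniform control in $\widetilde{L}^\infty(B^{s_c}_{2,1})$ absorbs the high-frequency tail after paying the small loss $\varepsilon>0$ in spatial regularity.

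\textbf{Step 1 (pointwise decay of each dyadic block).} Let $f$ denote any one of $n_+-n_-$, $u_\pm$, $E$, or a component of $\nabla B$. Theorem \ref{thm1.2} gives $\int_0^\infty \|\Delta_j f(t)\|_{L^2}^2\,dt<\infty$ for each $j$, while the membership $(n_\pm-1,u_\pm,E,B-\bar B)\in\widetilde{\mathcal{C}}^1(B^{s_c-1}_{2,1})$ makes $t\mapsto\|\Delta_j f(t)\|_{L^2}$ uniformly Lipschitz with a constant depending on $j$. The combination of square-integrability and Lipschitz continuity forces $\|\Delta_j f(t)\|_{L^2}\to 0$ as $t\to\infty$: otherwise, a sequence $t_n\to\infty$ with $\|\Delta_j f(t_n)\|_{L^2}\ge\delta$ would produce disjoint intervals of uniform length on which the integrand is bounded below by $(\delta/2)^2$, contradicting finiteness of the integral.

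\textbf{Step 2 (from blockwise to full Besov convergence).} For any $\eta>0$, split the Besov norm at a truncation level $J$ to be chosen,
\[
\|f(t)\|_{B^{s-\varepsilon}_{2,1}}\le \sum_{j<J}2^{j(s-\varepsilon)}\|\Delta_j f(t)\|_{L^2}+2^{-J\varepsilon}\|f\|_{\widetilde{L}^\infty(B^s_{2,1})}.
\]
The second (high-frequency) term is $<\eta/2$ as soon as $J$ is large, thanks to the uniform Besov bound of Theorem \ref{thm1.2}; the first is a finite sum in the inhomogeneous setting, each of whose terms tends to zero by Step 1, so it is also $<\eta/2$ for $t$ large. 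This yields the decays of $(n_+-n_-,u_\pm)$ in $B^{s_c-\varepsilon}_{2,1}$, of $E$ in $B^{s_c-1-\varepsilon}_{2,1}$, and of $B-\bar B$ in $B^{s_c-2-\varepsilon}_{p,1}$ (starting from the dissipation of $\nabla B$ at level $\widetilde{L}^2(B^{s_c-2}_{2,1})$).

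\textbf{Step 3 (the $L^p$ decay of $n_\pm-1$ and the periodic case).} In $\mathbb{R}^N$ with $N>2$, the inequality (\ref{R-E4}) does not control $n_\pm-1$ itself but only $\nabla n_\pm$; we therefore invoke the Sobolev embedding $\dot B^{s_c}_{2,1}\hookrightarrow\dot B^{s_c-1}_{p,1}$ with $p=2N/(N-2)$, using the correspondence between homogeneous and inhomogeneous Chemin--Lerner spaces from \cite{XK} to handle the low-frequency block, and Bernstein's inequality to trade $L^2$ decay for $L^p$ decay within a fixed range of frequencies. In the periodic case with $\bar n_{\pm0}=1$, mass conservation $\partial_t n_\pm+\nabla\cdot(n_\pm u_\pm)=0$ propagates $\bar n_\pm(t)=1$, so Poincar\'e's inequality upgrades the gradient dissipation of $n_\pm$ to a dissipation of $n_\pm-1$ itself in $\widetilde{L}^2(B^{s_c}_{2,1}(\mathbb{T}^N))$, and Steps 1--2 apply verbatim. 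The most delicate point in the whole argument is precisely this bookkeeping between homogeneous and inhomogeneous Chemin--Lerner norms in the $L^p$ case; Step 1, by contrast, is immediate once $\partial_t$ is read off from (\ref{R-E1}).
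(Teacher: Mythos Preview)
Your proposal is correct and follows essentially the approach the paper indicates in Remark~\ref{rem1.3}: the paper omits the proof entirely, pointing to Corollary~5.1 of \cite{FX} and stating only that it ``similarly follows by the Gagliardo--Nirenberg--Sobolev inequality and interpolation arguments,'' which is precisely what your Steps~1--3 implement (your dyadic high--low splitting in Step~2 is a hands-on formulation of the interpolation between the $\widetilde{L}^2$ dissipation norm and the uniform $\widetilde{L}^\infty(B^{s_c}_{2,1})$ bound, and Step~3 supplies the GNS/Poincar\'e input for the $n_\pm-1$ claims).
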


\begin{rem}\label{rem1.3}
Recalling the Corollary 5.1 in \cite{FX}, we omit details of the
proof of Corollary \ref{cor1.3}, since they are similarly followed
by the Gagliardo-Nirenberg-Sobolev inequality (see, \textit{e.g.},
\cite{E}) and interpolation arguments. In addition,  from the
embedding $B^{s_{c}-\varepsilon}_{2,1}\hookrightarrow
B^{s_{c}-1-\varepsilon}_{p,1}(N=3)$, we know that the large-time
asymptotic behavior of densities $n_{\pm}$ in the whole space case
is \textit{weaker} than that in the periodic case.
\end{rem}

\begin{rem}\label{rem1.4}
Following from the similar manners, the corresponding results can be
obtained for non-isentropic two-fluid Euler-Maxwell equations, which
include the temperature transport equations of carriers rather than
the assumed pressure-density relations as in (\ref{R-E1}) only. Let
us mention that the dissipation rates of temperatures will behave as
that of velocities, that is, there is no regularity-loss phenomenon
for temperatures.
\end{rem}

The rest of this paper unfolds as follows. In Sect. \ref{sec:2}, we
briefly review some useful properties on Besov spaces. In Sect.
\ref{sec:3}, we establish the local existence and blow-up criterion
for the Euler-Maxwell equations (\ref{R-E1}). Sect. \ref{sec:4} is
devoted to the global existence of classical solutions in critical
spaces. In the last section (Sect. \ref{sec:5}), we remark a natural
generalization on the continuity of composition functions in
Chemin-Lerner spaces.

\section{Littlewood-Paley theory and functional spaces}\label{sec:2}
Throughout the paper, $f\lesssim g$ denotes $f\leq Cg$, where $C>0$
is a generic constant. $f\thickapprox g$ means $f\lesssim g$ and
$g\lesssim f$. Denote by $\mathcal{C}([0,T],X)$ (resp.,
$\mathcal{C}^{1}([0,T],X)$) the space of continuous (resp.,
continuously differentiable) functions on $[0,T]$ with values in a
Banach space $X$.  Also, $\|(f,g,h)\|_{X}$ means $
\|f\|_{X}+\|g\|_{X}+\|h\|_{X}$, where $f,g,h\in X$. $\langle
f,g\rangle$ denotes the inner product of two functions $f,g$ in
$L^2(\mathbb{R}^{N})$.

In this section, we briefly review the Littlewood-Paley
decomposition and some properties of Besov spaces. The reader is
also referred to, \textit{e.g.}, \cite{BCD,D} for more details.

Let us start with the Fourier transform. The Fourier transform
$\hat{f}$ of a $L^1$-function $f$ is given by
$$\mathcal{F}f=\int_{\mathbb{R}^{N}}f(x)e^{-2\pi x\cdot\xi}dx.$$ More
generally, the Fourier transform of any $f\in\mathcal{S}'$, the
space of tempered distributions, is given by
$$(\mathcal{F}f,g)=(f,\mathcal{F}g)$$ for any $g\in \mathcal{S}$, the Schwartz
class.

First, we fix some notation.
$$\mathcal{S}_{0}=\Big\{\phi\in\mathcal{S},\partial^{\alpha}\mathcal{F}f(0)=0, \forall \alpha \in \mathbb{N}^{N}\ \mbox{multi-index}\Big\}.$$
Its dual is given by
$$\mathcal{S}'_{0}=\mathcal{S}'/\mathcal{P},$$ where $\mathcal{P}$
is the space of polynomials.

We now introduce a dyadic partition of $\mathbb{R}^{N}$. We choose
$\phi_{0}\in \mathcal{S}$ such that $\phi_{0}$ is even,
$$\mathrm{supp}\phi_{0}:=A_{0}=\Big\{\xi\in\mathbb{R}^{N}:\frac{3}{4}\leq|\xi|\leq\frac{8}{3}\Big\},\  \mbox{and}\ \ \phi_{0}>0\ \ \mbox{on}\ \ A_{0}.$$
Set $A_{q}=2^{q}A_{0}$ for $q\in\mathbb{Z}$. Furthermore, we define
$$\phi_{q}(\xi)=\phi_{0}(2^{-q}\xi)$$ and define $\Phi_{q}\in
\mathcal{S}$ by
$$\mathcal{F}\Phi_{q}(\xi)=\frac{\phi_{q}(\xi)}{\sum_{q\in \mathbb{Z}}\phi_{q}(\xi)}.$$
It follows that both $\mathcal{F}\Phi_{q}(\xi)$ and $\Phi_{q}$ are
even and satisfy the following properties:
$$\mathcal{F}\Phi_{q}(\xi)=\mathcal{F}\Phi_{0}(2^{-q}\xi),\ \ \ \mathrm{supp}\ \mathcal{F}\Phi_{q}(\xi)\subset A_{q},\ \ \ \Phi_{q}(x)=2^{qN}\Phi_{0}(2^{q}x)$$
and
$$\sum_{q=-\infty}^{\infty}\mathcal{F}\Phi_{q}(\xi)=\cases{1,\ \ \ \mbox{if}\ \ \xi\in\mathbb{R}^{N}\setminus \{0\},
\cr 0, \ \ \ \mbox{if}\ \ \xi=0.}
$$
As a consequence, for any $f\in S'_{0},$ we have
$$\sum_{q=-\infty}^{\infty}\Phi_{q}\ast f=f.$$

To define the homogeneous Besov spaces, we set
$$\dot{\Delta}_{q}f=\Phi_{q}\ast f,\ \ \ \ q=0,\pm1,\pm2,...$$

\begin{defn}\label{defn2.1}
For $s\in \mathbb{R}$ and $1\leq p,r\leq\infty,$ the homogeneous
Besov spaces $\dot{B}^{s}_{p,r}$ is defined by
$$\dot{B}^{s}_{p,r}=\{f\in S'_{0}:\|f\|_{\dot{B}^{s}_{p,r}}<\infty\},$$
where
$$\|f\|_{\dot{B}^{s}_{p,r}}
=\cases{\Big(\sum_{q\in\mathbb{Z}}(2^{qs}\|\dot{\Delta}_{q}f\|_{L^p})^{r}\Big)^{1/r},\
\ r<\infty, \cr \sup_{q\in\mathbb{Z}}
2^{qs}\|\dot{\Delta}_{q}f\|_{L^p},\ \ r=\infty.} $$\end{defn}

To define the inhomogeneous Besov spaces, we set $\Psi\in
\mathcal{C}_{0}^{\infty}(\mathbb{R}^{N})$ be even and satisfy
$$\mathcal{F}\Psi(\xi)=1-\sum_{q=0}^{\infty}\mathcal{F}\Phi_{q}(\xi).$$
It is clear that for any $f\in S'$, yields
$$\Psi*f+\sum_{q=0}^{\infty}\Phi_{q}\ast f=f.$$
We further set
$$\Delta_{q}f=\cases{0,\ \ \ \ \ \ \ \, \ j\leq-2,\cr
\Psi*f,\ \ \ j=-1,\cr \Phi_{q}\ast f, \ \ j=0,1,2,...}$$

\begin{defn}\label{defn2.2}
For $s\in \mathbb{R}$ and $1\leq p,r\leq\infty,$ the inhomogeneous
Besov spaces $B^{s}_{p,r}$ is defined by
$$B^{s}_{p,r}=\{f\in S':\|f\|_{B^{s}_{p,r}}<\infty\},$$
where
$$\|f\|_{B^{s}_{p,r}}
=\cases{\Big(\sum_{q=-1}^{\infty}(2^{qs}\|\dot{\Delta}_{q}f\|_{L^p})^{r}\Big)^{1/r},\
\ r<\infty, \cr \sup_{q\geq-1} 2^{qs}\|\dot{\Delta}_{q}f\|_{L^p},\ \
r=\infty.}$$
\end{defn}

Let us point out that the definitions of $\dot{B}^{s}_{p,r}$ and
$B^{s}_{p,r}$ does not depend on the choice of the Littlewood-Paley
decomposition.  Now, we state some basic properties, which will be
used in subsequent analysis.
\begin{lem}(Bernstein inequality)\label{lem2.1}
Let $k\in\mathbb{N}$ and $0<R_{1}<R_{2}$. There exists a constant
$C$, depending only on $R_{1},R_{2}$ and $N$, such that for all
$1\leq a\leq b\leq\infty$ and $f\in L^{a}$,
$$
\mathrm{Supp}\mathcal{F}f\subset \{\xi\in \mathbb{R}^{N}: |\xi|\leq
R_{1}\lambda\}\Rightarrow\sup_{|\alpha|=k}\|\partial^{\alpha}f\|_{L^{b}}
\leq C^{k+1}\lambda^{k+d(\frac{1}{a}-\frac{1}{b})}\|f\|_{L^{a}};
$$
$$
\mathrm{Supp}\mathcal{F}f\subset \{\xi\in \mathbb{R}^{N}:
R_{1}\lambda\leq|\xi|\leq R_{2}\lambda\} \Rightarrow
C^{-k-1}\lambda^{k}\|f\|_{L^{a}}\leq
\sup_{|\alpha|=k}\|\partial^{\alpha}f\|_{L^{a}}\leq
C^{k+1}\lambda^{k}\|f\|_{L^{a}}.
$$
\end{lem}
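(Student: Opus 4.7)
The plan is to prove both parts by reducing each estimate to a convolution representation and invoking Young's inequality, exploiting the scaling invariance of the Fourier transform under dilations. First, for the upper bound in the first assertion, I would fix a smooth cutoff $\chi\in C_c^\infty(\mathbb{R}^N)$ which equals $1$ on the ball $\{|\xi|\leq R_1\}$ and is supported in a slightly larger ball. Since $\operatorname{supp}\hat f\subset\{|\xi|\leq R_1\lambda\}$, we have the identity $\hat f(\xi)=\chi(\xi/\lambda)\hat f(\xi)$; multiplying by $(i\xi)^\alpha$ and inverting the Fourier transform gives $\partial^\alpha f=g_{\lambda,\alpha}\ast f$, where $g_{\lambda,\alpha}(x)=\lambda^{N+k}g_\alpha(\lambda x)$ and $g_\alpha\in\mathcal{S}$ is the inverse Fourier transform of $(i\xi)^\alpha\chi(\xi)$, independent of $\lambda$. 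Young's inequality with $1/c:=1+1/b-1/a$ then gives $\|\partial^\alpha f\|_{L^b}\leq\|g_{\lambda,\alpha}\|_{L^c}\|f\|_{L^a}$, and the dilation identity $\|g_{\lambda,\alpha}\|_{L^c}=\lambda^{k+N(1/a-1/b)}\|g_\alpha\|_{L^c}$ produces the advertised exponent on $\lambda$.

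For the second assertion, the upper bound is an immediate specialization of the first with $a=b$. The lower bound rests on the algebraic identity $|\xi|^{2k}=\sum_{|\alpha|=k}\binom{k}{\alpha}\xi^{2\alpha}$, equivalently $(-\Delta)^k=\sum_{|\alpha|=k}\binom{k}{\alpha}\partial^{2\alpha}$. I would pick $\psi\in C_c^\infty(\mathbb{R}^N\setminus\{0\})$ equal to $1$ on the annulus $\{R_1\leq|\xi|\leq R_2\}$ and supported in a slightly larger annulus bounded away from the origin. Because $\operatorname{supp}\hat f\subset\{R_1\lambda\leq|\xi|\leq R_2\lambda\}$, we have $\hat f(\xi)=\psi(\xi/\lambda)\hat f(\xi)=\bigl(\psi(\xi/\lambda)|\xi|^{-2k}\bigr)\bigl(|\xi|^{2k}\hat f(\xi)\bigr)$; letting $H\in\mathcal{S}$ be the inverse Fourier transform of $\psi(\xi)/|\xi|^{2k}$ (well-defined since $\psi$ vanishes near $0$) and expanding $|\xi|^{2k}$ via the identity above yields the representation $f=\lambda^{-2k}\sum_{|\alpha|=k}c_\alpha\,H_\lambda\ast\partial^{2\alpha}f$ with $H_\lambda(x)=\lambda^N H(\lambda x)$. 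Young's inequality in $L^1\ast L^a\to L^a$ then gives $\|f\|_{L^a}\lesssim\lambda^{-2k}\sup_{|\beta|=2k}\|\partial^\beta f\|_{L^a}$, and applying the already-proved upper bound of the second assertion to $\partial^\alpha f$ (whose Fourier support is unchanged) reduces $\|\partial^{2\alpha}f\|_{L^a}$ to $\lambda^k\|\partial^\alpha f\|_{L^a}$, giving the desired lower bound.

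The main (mild) obstacle is bookkeeping the constant as a function of $k$ to produce the geometric form $C^{k+1}$ with $C=C(R_1,R_2,N)$: a careless bound on $\|g_\alpha\|_{L^c}$ or on the Schwartz seminorms of $H$ allows factorial growth in $k$, which must be avoided. This is handled by exploiting that $\chi$ and $\psi/|\cdot|^{2k}$ are supported on a \emph{fixed} set where $|\xi|$ is bounded above and (in the second case) also bounded below, so the symbols $(i\xi)^\alpha\chi(\xi)$ and $\xi^{2\alpha}/|\xi|^{2k}\cdot\psi$ and their derivatives admit $L^1$-bounds on their inverse Fourier transforms growing at most geometrically in $k$ via the Leibniz rule together with straightforward combinatorial estimates. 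Once this uniform-in-$k$ bound is in place, the remainder of the argument is routine manipulation of Fourier multipliers and Young's inequality.
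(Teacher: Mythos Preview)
Your argument is correct and is precisely the standard proof of Bernstein's inequality as presented in the reference the paper cites (Bahouri--Chemin--Danchin, \cite{BCD}, Lemma~2.1). The paper itself does not supply a proof of this lemma; it is stated as a known tool from Littlewood--Paley theory and used later as a black box. So there is nothing to compare against in the paper proper.

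A few minor remarks on your write-up. First, your handling of the $C^{k+1}$ bookkeeping is exactly the point that needs care, and your sketch of how to get geometric-in-$k$ bounds on $\|g_\alpha\|_{L^c}$ and $\|H\|_{L^1}$ is right in spirit; in practice one bounds $\|(1+|x|^2)^{M}g_\alpha\|_{L^\infty}$ for a fixed $M>N/2$ via Plancherel-type arguments on $(1-\Delta)^M[(i\xi)^\alpha\chi(\xi)]$, using that $\chi$ has fixed compact support so that $\sup_{|\xi|\le R}|\xi^\alpha|\le R^{k}$ and the Leibniz expansion contributes only polynomially many terms with bounded coefficients. Second, in the lower-bound step your use of $\partial^{2\alpha}f=\partial^\alpha(\partial^\alpha f)$ followed by the already-established upper bound on $\partial^\alpha f$ is clean; just be sure to note explicitly that the multinomial sum $\sum_{|\alpha|=k}\binom{k}{\alpha}=N^k$ is itself geometric in $k$, so it can be absorbed into the constant. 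Finally, the statement as printed in the paper has a typographical slip ($d$ in place of $N$ in the exponent); your version with $N(1/a-1/b)$ is the intended one.
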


As a direct corollary of the above inequality, we have
\begin{rem}\label{rem2.1} For all
multi-index $\alpha$, it holds that
$$\frac{1}{C}\|f\|_{\dot{B}^{s + |\alpha|}_{p,
r}}\leq\|\partial^\alpha f\|_{\dot{B}^s_{p, r}}\leq
C\|f\|_{\dot{B}^{s + |\alpha|}_{p, r}};$$
$$
\|\partial^\alpha f\|_{B^s_{p, r}}\leq C\|f\|_{B^{s + |\alpha|}_{p,
r}}.
$$
\end{rem}

The second one is the embedding properties in Besov spaces.
\begin{lem}\label{lem2.2} Let $s\in \mathbb{R}$ and $1\leq
p,r\leq\infty,$ then
\begin{itemize}
\item[(1)]If $s>0$, then $B^{s}_{p,r}=L^{p}\cap \dot{B}^{s}_{p,r};$
\item[(2)]If $\tilde{s}\leq s$, then $B^{s}_{p,r}\hookrightarrow
B^{\tilde{s}}_{p,\tilde{r}}$;
\item[(3)]If $1\leq r\leq\tilde{r}\leq\infty$, then $\dot{B}^{s}_{p,r}\hookrightarrow
\dot{B}^{s}_{p,\tilde{r}}$ and $B^{s}_{p,r}\hookrightarrow
B^{s}_{p,\tilde{r}};$
\item[(4)]If $1\leq p\leq\tilde{p}\leq\infty$, then $\dot{B}^{s}_{p,r}\hookrightarrow \dot{B}^{s-N(\frac{1}{p}-\frac{1}{\tilde{p}})}_{\tilde{p},r}
$ and $B^{s}_{p,r}\hookrightarrow
B^{s-N(\frac{1}{p}-\frac{1}{\tilde{p}})}_{\tilde{p},r}$;
\item[(5)]$\dot{B}^{N/p}_{p,1}\hookrightarrow\mathcal{C}_{0},\ \ B^{N/p}_{p,1}\hookrightarrow\mathcal{C}_{0}(1\leq p<\infty);$
\end{itemize}
where $\mathcal{C}_{0}$ is the space of continuous bounded functions
which decay at infinity.
\end{lem}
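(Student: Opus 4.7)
The plan is to derive each embedding from Bernstein's inequality (Lemma~\ref{lem2.1}) and the monotonicity of $\ell^r$, handling the homogeneous and inhomogeneous cases in parallel and taking some care with the low-frequency block $\Psi*f$ in the inhomogeneous setting.

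Items (3), (2) and (4) each reduce to a one-line manipulation. For (3), the inclusion $\ell^r(\mathbb{Z})\hookrightarrow\ell^{\tilde r}(\mathbb{Z})$ applied to the sequence $\bigl(2^{qs}\|\dot\Delta_q f\|_{L^p}\bigr)_q$ gives both the homogeneous and inhomogeneous claims at once. For (2), on the inhomogeneous scale $q\geq -1$ the factor $2^{q(\tilde s - s)}$ is uniformly bounded whenever $\tilde s\leq s$; if the inequality is strict the factor is even geometrically decaying, so the $B^{\tilde s}_{p,\tilde r}$ norm is dominated by the $B^s_{p,r}$ norm (possibly after invoking (3) when $\tilde s=s$ to change the exponent $r$). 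For (4), the second half of Bernstein's inequality yields $\|\dot\Delta_q f\|_{L^{\tilde p}}\lesssim 2^{qN(1/p-1/\tilde p)}\|\dot\Delta_q f\|_{L^p}$, and multiplying through by $2^{q(s-N(1/p-1/\tilde p))}$ and taking the $\ell^r$ norm delivers the result in both homogeneous and inhomogeneous versions.

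For (1), the key observation is that when $s>0$ the geometric factor $2^{-qs}$ makes the series $\sum_{q\geq 0}\Phi_q*f$ absolutely convergent in $L^p$ whenever $f\in\dot B^s_{p,r}$, so the decomposition $f=\Psi*f+\sum_{q\geq 0}\Phi_q*f$ shows $L^p\cap\dot B^s_{p,r}\hookrightarrow B^s_{p,r}$; conversely $\Delta_q f=\dot\Delta_q f$ for $q\geq 0$ while $\|\Psi*f\|_{L^p}\lesssim\|f\|_{L^p}$ via Young's inequality, giving the reverse inclusion. For (5), Bernstein together with $r=1$ summability yields
\[
\|f\|_{L^\infty}\leq\sum_{q\geq -1}\|\Delta_q f\|_{L^\infty}\lesssim\sum_{q\geq -1}2^{qN/p}\|\Delta_q f\|_{L^p}=\|f\|_{B^{N/p}_{p,1}};
\]
the refinement to $\mathcal{C}_0$ follows by density of $\mathcal{S}$ in $B^{N/p}_{p,1}$ (valid for $p<\infty$, and where the exponent $r=1$ guarantees actual convergence of the Littlewood-Paley series) combined with the fact that $\mathcal{C}_0$ is closed under uniform limits. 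The homogeneous analogue is proved the same way modulo polynomials.

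The only point demanding a little thought is (1), where one must verify that the equivalence does not depend on the particular dyadic partition and that the identification $f=\Psi*f+\sum_{q\geq 0}\Phi_q*f$ holds in $L^p$ for every $f\in L^p\cap\dot B^s_{p,r}$; everything else amounts to bookkeeping the dyadic indices.
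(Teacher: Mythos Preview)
The paper does not actually prove Lemma~\ref{lem2.2}; it is recorded as a standard fact about Besov spaces, with the reader referred to \cite{BCD,D} for details. Your sketch is correct and is precisely the textbook argument one finds in those references: Bernstein's inequality for (4) and (5), the $\ell^r$ inclusion for (3), the bounded geometric weight $2^{q(\tilde s-s)}$ on $q\geq -1$ for (2), and the low/high-frequency splitting for (1).

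One minor remark: in (4) it is the first conclusion of Lemma~\ref{lem2.1} (ball support, $k=0$) that gives the $L^p\to L^{\tilde p}$ bound you quote, not the annulus statement; and in (2) the paper's formulation is slightly loose about $\tilde r$, so your reading --- strict inequality $\tilde s<s$ with arbitrary $\tilde r$, or $\tilde s=s$ combined with (3) --- is the right way to make it precise.
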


On the other hand, we also present the definition of Chemin-Lerner
space-time spaces first introduced by J.-Y. Chemin and N. Lerner
\cite{C2}, which are the refinement of the spaces
$L^{\theta}_{T}(\dot{B}^{s}_{p,r})$ or
$L^{\theta}_{T}(B^{s}_{p,r})$.

\begin{defn}\label{defn2.3}
For $T>0, s\in\mathbb{R}, 1\leq r,\theta\leq\infty$, the homogeneous
mixed time-space Besov spaces
$\widetilde{L}^{\theta}_{T}(\dot{B}^{s}_{p,r})$ is defined by
$$\widetilde{L}^{\theta}_{T}(\dot{B}^{s}_{p,r}):
=\{f\in
L^{\theta}(0,T;\mathcal{S}'_{0}):\|f\|_{\widetilde{L}^{\theta}_{T}(\dot{B}^{s}_{p,r})}<+\infty\},$$
where
$$\|f\|_{\widetilde{L}^{\theta}_{T}(\dot{B}^{s}_{p,r})}:=\Big(\sum_{q\in\mathbb{Z}}(2^{qs}\|\dot{\Delta}_{q}f\|_{L^{\theta}_{T}(L^{p})})^{r}\Big)^{\frac{1}{r}}$$
with the usual convention if $r=\infty$.
\end{defn}

\begin{defn}\label{defn2.4}
For $T>0, s\in\mathbb{R}, 1\leq r,\theta\leq\infty$, the
inhomogeneous mixed time-space Besov spaces
$\widetilde{L}^{\theta}_{T}(B^{s}_{p,r})$ is defined by
$$\widetilde{L}^{\theta}_{T}(B^{s}_{p,r}):
=\{f\in
L^{\theta}(0,T;\mathcal{S}'):\|f\|_{\widetilde{L}^{\theta}_{T}(B^{s}_{p,r})}<+\infty\},$$
where
$$\|f\|_{\widetilde{L}^{\theta}_{T}(B^{s}_{p,r})}:=\Big(\sum_{q\geq-1}(2^{qs}\|\Delta_{q}f\|_{L^{\theta}_{T}(L^{p})})^{r}\Big)^{\frac{1}{r}}$$
with the usual convention if $r=\infty$.
\end{defn}

Next we state some basic properties on the inhomogeneous
Chemin-Lerner spaces only, since the similar ones are true in the
homogeneous Chemin-Lerner spaces.

The first one is that $\widetilde{L}^{\theta}_{T}(B^{s}_{p,r})$ may
be linked with the classical spaces $L^{\theta}_{T}(B^{s}_{p,r})$
via the Minkowski's inequality:
\begin{rem}\label{rem2.2}
It holds that
$$\|f\|_{\widetilde{L}^{\theta}_{T}(B^{s}_{p,r})}\leq\|f\|_{L^{\theta}_{T}(B^{s}_{p,r})}\,\,\,
\mbox{if}\,\, r\geq\theta;\ \ \ \
\|f\|_{\widetilde{L}^{\theta}_{T}(B^{s}_{p,r})}\geq\|f\|_{L^{\theta}_{T}(B^{s}_{p,r})}\,\,\,
\mbox{if}\,\, r\leq\theta.
$$\end{rem}
Let us also recall the property of continuity for product in
Chemin-Lerner spaces $\widetilde{L}^{\theta}_{T}(B^{s}_{p,r})$.
\begin{prop}\label{prop2.1}
The following inequality holds:
$$
\|fg\|_{\widetilde{L}^{\theta}_{T}(B^{s}_{p,r})}\leq
C(\|f\|_{L^{\theta_{1}}_{T}(L^{\infty})}\|g\|_{\widetilde{L}^{\theta_{2}}_{T}(B^{s}_{p,r})}
+\|g\|_{L^{\theta_{3}}_{T}(L^{\infty})}\|f\|_{\widetilde{L}^{\theta_{4}}_{T}(B^{s}_{p,r})})
$$
whenever $s>0, 1\leq p\leq\infty,
1\leq\theta,\theta_{1},\theta_{2},\theta_{3},\theta_{4}\leq\infty$
and
$$\frac{1}{\theta}=\frac{1}{\theta_{1}}+\frac{1}{\theta_{2}}=\frac{1}{\theta_{3}}+\frac{1}{\theta_{4}}.$$
As a direct corollary, one has
$$\|fg\|_{\widetilde{L}^{\theta}_{T}(B^{s}_{p,r})}
\leq
C\|f\|_{\widetilde{L}^{\theta_{1}}_{T}(B^{s}_{p,r})}\|g\|_{\widetilde{L}^{\theta_{2}}_{T}(B^{s}_{p,r})}$$
whenever $s\geq d/p,
\frac{1}{\theta}=\frac{1}{\theta_{1}}+\frac{1}{\theta_{2}}.$
\end{prop}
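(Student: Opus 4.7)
The plan is to prove the product estimate by Bony's paradifferential decomposition, writing
\begin{equation*}
fg \;=\; T_f g + T_g f + R(f,g),
\end{equation*}
where $T_f g = \sum_{q} S_{q-1} f \,\Delta_q g$ is the paraproduct (with $S_{q-1} = \sum_{k\leq q-2}\Delta_k$) and $R(f,g) = \sum_{|q-q'|\leq 1} \Delta_q f \,\Delta_{q'} g$ is the remainder. Each of the three pieces will be estimated separately in $\widetilde{L}^{\theta}_T(B^s_{p,r})$ by exploiting the spectral localization of the building blocks, Hölder's inequality in time, and Bernstein's lemma in space. The second inequality then follows from the first and the embedding $B^{s}_{p,r}\hookrightarrow L^\infty$ valid for $s\geq N/p$ (cf.\ Lemma \ref{lem2.2}(5)), which allows one to absorb the mixed $L^{\theta_i}_T(L^\infty)$ norms into the Chemin--Lerner norms.

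For the paraproduct $T_f g$, the spectral support of $S_{q-1} f \,\Delta_q g$ sits in an annulus of size $\sim 2^q$, so $\Delta_{q'} T_f g$ picks up only $|q-q'|\leq C$. Using $\|S_{q-1} f\|_{L^\infty} \lesssim \|f\|_{L^\infty}$ and Hölder in $t$ with $1/\theta = 1/\theta_1 + 1/\theta_2$ gives
\begin{equation*}
\|\Delta_{q'} T_f g\|_{L^\theta_T(L^p)} \;\lesssim\; \|f\|_{L^{\theta_1}_T(L^\infty)} \sum_{|q-q'|\leq C}\|\Delta_q g\|_{L^{\theta_2}_T(L^p)}.
\end{equation*}
Multiplying by $2^{q's}$, taking the $\ell^r(\mathbb{Z}_{\geq -1})$ norm, and using that the finite-shift sum is harmless on $\ell^r$ yields the bound $\|f\|_{L^{\theta_1}_T(L^\infty)}\|g\|_{\widetilde{L}^{\theta_2}_T(B^s_{p,r})}$. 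The symmetric paraproduct $T_g f$ is handled identically with indices $\theta_3,\theta_4$, producing the second term on the right.

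The remainder $R(f,g)$ is the delicate piece. Here the spectrum of $\Delta_q f \,\Delta_{q'} g$ for $|q-q'|\leq 1$ sits in a \emph{ball} of size $\sim 2^q$, so $\Delta_{q'} R(f,g)$ receives contributions from all $q\geq q'-N_0$. One applies Bernstein's lemma (Lemma \ref{lem2.1}) to gain the factor $2^{q'N/p}\|\Delta_q f \,\Delta_q g\|_{L^{p/2}}$ if needed, but more simply puts the $L^\infty$ norm on one factor and the $L^p$ norm on the other, yielding
\begin{equation*}
2^{q's}\|\Delta_{q'} R(f,g)\|_{L^\theta_T(L^p)} \;\lesssim\; \sum_{q\geq q'-N_0} 2^{(q'-q)s}\,2^{qs}\|\Delta_q g\|_{L^{\theta_2}_T(L^p)}\|f\|_{L^{\theta_1}_T(L^\infty)}.
\end{equation*}
The hypothesis $s>0$ is exactly what makes $2^{(q'-q)s}$ summable for $q\geq q'-N_0$, so Young's convolution inequality on $\mathbb{Z}$ delivers the $\ell^r$ bound and closes the estimate.

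The main obstacle, and the reason the statement is phrased with four time exponents, is that in Chemin--Lerner spaces the $L^\theta_T$ norm sits \emph{inside} the $\ell^r_q$ summation; one must therefore apply Hölder in $t$ block by block before summing, which constrains how the exponents $(\theta_1,\theta_2)$ and $(\theta_3,\theta_4)$ are paired with the paraproducts. The corollary under $s\geq N/p$ then follows by the embedding $\widetilde{L}^{\theta_i}_T(B^s_{p,r})\hookrightarrow L^{\theta_i}_T(L^\infty)$ (using Remark \ref{rem2.2} together with Lemma \ref{lem2.2}(5)), applied to the factor we wish to place in $L^\infty$.
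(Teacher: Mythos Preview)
The paper does not supply a proof of this proposition; it is stated as a recalled fact (``Let us also recall the property of continuity for product in Chemin--Lerner spaces\ldots''), with the underlying argument implicitly deferred to the references, in particular \cite{BCD}. Your Bony paraproduct decomposition is precisely the standard route taken in that reference to establish such product estimates, and your treatment of the three pieces---the two paraproducts via almost-orthogonality and H\"older in time block by block, the remainder via the geometric sum enabled by $s>0$---is correct. One small caveat on the corollary: the embedding $B^{s}_{p,r}\hookrightarrow L^\infty$ at the endpoint $s=N/p$ requires $r=1$ (Lemma~\ref{lem2.2}(5) is stated only for $r=1$), so strictly speaking the corollary as written needs either $s>N/p$ or $s=N/p$ with $r=1$; the paper's own statement is a bit loose on this point as well.
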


In the next symmetrization, we meet with some composition functions.
The following continuity result for compositions is used to estimate
them.
\begin{prop}\label{prop2.2}(\cite{A})
Let $s>0$, $1\leq p, r, \theta\leq \infty$, $F'\in
W^{[s]+1,\infty}_{loc}(I;\mathbb{R})$ with $F(0)=0$, $T\in
(0,\infty]$ and $v\in \widetilde{L}^{\theta}_{T}(B^{s}_{p,r})\cap
L^{\infty}_{T}(L^{\infty}).$ Then
$$\|F(f)\|_{\widetilde{L}^{\theta}_{T}(B^{s}_{p,r})}\leq
C(1+\|f\|_{L^{\infty}_{T}(L^{\infty})})^{[s]+1}\|F'\|_{W^{[s]+1,\infty}}\|f\|_{\widetilde{L}^{\theta}_{T}(B^{s}_{p,r})}.$$
\end{prop}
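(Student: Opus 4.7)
The plan is to adapt the classical Besov composition estimate (in the spirit of Runst--Sickel / Bony / Meyer) to the Chemin--Lerner scale, by exploiting that the auxiliary multipliers arising from a telescoping decomposition admit uniform-in-$t$ pointwise bounds, so the time integration commutes cleanly with the dyadic sum. Since $F(0)=0$, I would first write, with the convention $S_{-1}f:=0$,
$$F(f) \;=\; \sum_{q\ge -1}\bigl[F(S_{q+1}f) - F(S_q f)\bigr] \;=\; \sum_{q\ge -1} m_q\,\Delta_q f,\qquad m_q \;:=\; \int_0^1 F'\bigl(S_q f + t\,\Delta_q f\bigr)\,dt,$$
so that $\|m_q\|_{L^{\infty}_T(L^{\infty})}\le \|F'\|_{L^{\infty}}$ uniformly in $q$. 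Each summand is thus the product of a spectrally localized factor and a bounded multiplier, which is the structure needed for the paraproduct machinery.

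Next I would apply $\Delta_j$ to both sides and split the dyadic sum at $q=j-N_0$ in the standard Bony fashion. The \emph{high} part ($q\ge j-N_0$) is handled by the $L^p$-boundedness of $\Delta_j$ combined with the pointwise bound on $m_q$, yielding an estimate in terms of $\sum_{q\ge j-N_0}\|\Delta_q f\|_{L^\theta_T(L^p)}$. For the \emph{low} part ($q<j-N_0$), where the convolution kernel of $\Delta_j$ sees $m_q\Delta_q f$ in its ``smooth'' regime, spectral localization lets one trade the weight $2^{js}$ for $[s]+1$ derivatives falling onto $m_q$. The required bounds $\|\partial^\alpha m_q\|_{L^{\infty}_T(L^{\infty})}$ for $|\alpha|\le [s]+1$ follow from the Fa\`a di Bruno formula: each chain-rule derivative of $S_q f + t\Delta_q f$ is controlled in $L^\infty$ via Bernstein's inequality by a factor $2^{q|\beta|}\|f\|_{L^{\infty}_T(L^{\infty})}$, producing the prefactor $(1+\|f\|_{L^{\infty}_T(L^{\infty})})^{[s]+1}\|F'\|_{W^{[s]+1,\infty}}$.

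Finally, I would take the $L^\theta_T(L^p)$ norm of $\Delta_j F(f)$, multiply by $2^{js}$, and sum in $\ell^r_j$ by a discrete Young inequality against a kernel of the form $2^{-|j-q|\sigma}$ with some fixed $\sigma>0$, which reduces the $\ell^r_j$ sum to an $\ell^r_q$ sum applied to $\|\Delta_q f\|_{L^\theta_T(L^p)}$ and thereby reconstructs the norm $\|f\|_{\widetilde{L}^\theta_T(B^s_{p,r})}$. The main obstacle is the bookkeeping in the low-frequency part: because $m_q$ depends nonlinearly on $f$, each derivative applied to it generates a chain-rule product of up to $[s]+1$ factors $\partial(S_q f + t\Delta_q f)$, and one must verify that all such factors are absorbed into $\|f\|_{L^{\infty}_T(L^{\infty})}$ rather than consuming the $\widetilde{L}^\theta_T(B^s_{p,r})$ norm (this is precisely why the extra $L^\infty$ hypothesis is imposed and why the exponent $[s]+1$ enters). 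Beyond this point the passage from the static Besov case to the Chemin--Lerner version is automatic, since the multipliers $m_q$ carry no structural time dependence beyond a pointwise $L^\infty_{t,x}$ bound, so Minkowski's inequality legitimately exchanges $L^\theta_T$ and $\ell^r_j$ at every step.
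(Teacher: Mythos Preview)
The paper does not supply its own proof of Proposition~\ref{prop2.2}: the statement is quoted from Abidi~\cite{A} and used as a black box (the only related argument in the paper is the short proof of Proposition~\ref{prop5.1} in the Appendix, which \emph{invokes} Proposition~\ref{prop2.2} rather than proving it). So there is nothing in the paper to compare your proposal against.

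That said, your outline is the standard Meyer-type telescoping argument that underlies such composition estimates, and it is essentially correct. The decomposition $F(f)=\sum_{q}m_q\Delta_q f$ with $m_q=\int_0^1 F'(S_qf+t\Delta_qf)\,dt$, the uniform $L^\infty_{t,x}$ bound on $m_q$, the high/low split relative to the output frequency $j$, and the use of Fa\`a di Bruno together with Bernstein to control $\partial^\alpha m_q$ for $|\alpha|\le[s]+1$ are exactly the ingredients of the classical proof; the passage to Chemin--Lerner spaces is indeed automatic since the pointwise $L^\infty$ bounds on $m_q$ and its derivatives are uniform in $t$, so the $L^\theta_T$ integration and the $\ell^r_q$ summation commute via Minkowski. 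The one place where your sketch is a bit compressed is the low-frequency part: you should make explicit that when $q<j-N_0$ the support of $\mathcal{F}(\Delta_q f)$ is far from the annulus $A_j$, so $\Delta_j(m_q\Delta_q f)$ forces a frequency $\sim 2^j$ to come from $m_q$; one then uses $\|\Delta_j m_q\|_{L^\infty}\lesssim 2^{-j([s]+1)}\sup_{|\alpha|=[s]+1}\|\partial^\alpha m_q\|_{L^\infty}$ before invoking Fa\`a di Bruno. With that detail filled in, the argument closes with the discrete Young inequality exactly as you describe.
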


In addition, we present some estimates of commutators in homogeneous
and inhomogeneous Chemin-Lerner spaces to bound commutators.
\begin{prop}\label{prop2.3}
Let  $1<p<\infty$ and $1\leq \rho\leq\infty$. Then there exists a
generic constant $C>0$ depending only on $s, N$ such that
$$\cases{\|[f,\Delta_{q}]\mathcal{A}g\|_{L^{\theta}_{T}(L^{p})}\leq
Cc_{q}2^{-qs}\|\nabla
f\|_{\widetilde{L}^{\theta_{1}}_{T}(B^{s-1}_{p,1})}\|g\|_{\widetilde{L}^{\theta_{2}}_{T}(B^{s}_{p,1})},\
\ s=1+\frac{N}{p},\cr
\|[f,\Delta_{q}]g\|_{L^{\theta}_{T}(L^{p})}\leq
Cc_{q}2^{-q(s+1)}\|f\|_{\widetilde{L}^{\theta_{1}}_{T}(\dot{B}^{\frac{N}{p}+1}_{p,1})}\|g\|_{\widetilde{L}^{\theta_{2}}_{T}(\dot{B}^{s}_{p,1})},\
\ s\in(-\frac{N}{p}-1, \frac{N}{p}],}
$$
where the commutator $[\cdot,\cdot]$ is defined by $[f,g]=fg-gf$,
and the operator $\mathcal{A}:=\mathrm{div}$ or $\mathrm{\nabla}$.
$\{c_{q}\}$ denotes a sequence such that $\|(c_{q})\|_{ {l^{1}}}\leq
1$, $\frac{1}{\theta}=\frac{1}{\theta_{1}}+\frac{1}{\theta_{2}}$.
\end{prop}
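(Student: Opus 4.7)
The plan is to reduce the commutator to pieces that can be controlled by a Bony-type paraproduct decomposition $fg = T_f g + T_g f + R(f,g)$, where $T_f g := \sum_{q'} S_{q'-1} f\,\Delta_{q'} g$ and $R(f,g) := \sum_{q'}\Delta_{q'} f\,\widetilde{\Delta}_{q'} g$. Applying $\Delta_q$ to both sides yields
$$[\Delta_q,f]g \;=\; \underbrace{[\Delta_q,T_f]g}_{\mathrm{I}_q} \;+\; \underbrace{\Delta_q T_g f - T_{\Delta_q g} f}_{\mathrm{II}_q} \;+\; \underbrace{\Delta_q R(f,g) - R(f,\Delta_q g)}_{\mathrm{III}_q},$$
and each of the three pieces will be estimated first pointwise in $t$, then reassembled in time by Hölder's inequality to recover the Chemin-Lerner structure.

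For the first inequality (inhomogeneous, $s = 1+N/p$, with $\mathcal{A}g$ in place of $g$), the key term is $\mathrm{I}_q$: spectral localization restricts the sum to $|q-q'| \le 4$, and on each surviving summand I use the integral representation
$$[\Delta_q, S_{q'-1}f]\mathcal{A}\Delta_{q'}g(x) \;=\; -\int_0^1\!\!\int \Phi_q(y)\, y\cdot \nabla S_{q'-1}f(x-\tau y)\,\mathcal{A}\Delta_{q'}g(x-y)\,dy\,d\tau.$$
Since $\int|y|\,|\Phi_q(y)|\,dy \lesssim 2^{-q}$, Bernstein's inequality (Lemma \ref{lem2.1}) converts the $\mathcal{A}$ into a factor $2^{q'} \sim 2^q$, giving $\|[\Delta_q, S_{q'-1}f]\mathcal{A}\Delta_{q'}g\|_{L^p} \lesssim \|\nabla S_{q'-1}f\|_{L^\infty}\|\Delta_{q'}g\|_{L^p}$. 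Using the embedding $B^{N/p}_{p,1}\hookrightarrow L^\infty$ from Lemma \ref{lem2.2} to control $\|\nabla f\|_{L^\infty}$ by $\|\nabla f\|_{B^{s-1}_{p,1}}$ and the $\ell^1$ summability of the sequence $\bigl(2^{q's}\|\Delta_{q'}g\|_{L^p}\bigr)_{q'}$ produces the required factor $c_q 2^{-qs}$. The paraproduct remainder $\mathrm{II}_q$ and the remainder piece $\mathrm{III}_q$ are handled by the same mechanism: spectral support constrains the sum ($|q-q'|\le 4$ for $\mathrm{II}_q$ and $q'\ge q-N_0$ for $\mathrm{III}_q$), after which Bernstein moves the derivative in $\mathcal{A}$ onto whichever factor has lowest frequency, and one concludes by Hölder in $x$ and $\ell^1$ summation.

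For the second inequality (homogeneous, $s \in (-N/p-1,N/p]$), the structure is the same but with one derivative transferred from $g$ back to $f$, which accounts for the sharper factor $2^{-q(s+1)}$: the $2^{-q}$ comes from the Taylor mean-value step inside $\mathrm{I}_q$, while $\|\nabla S_{q'-1}f\|_{L^\infty}$ is controlled by $\|f\|_{\dot{B}^{N/p+1}_{p,1}}$ via $\dot{B}^{N/p}_{p,1}\hookrightarrow L^\infty$. The upper endpoint $s \le N/p$ is what makes the paraproduct $\mathrm{II}_q$ converge when $g$ is paraproduct-ed against $f$ (again through $\dot{B}^{N/p}_{p,1}\hookrightarrow L^\infty$), while the lower bound $s>-N/p-1$ is exactly the condition for the remainder series $\sum_{q'\ge q-N_0} 2^{q's}\|\Delta_{q'}f\|_{L^p}\|\widetilde{\Delta}_{q'}g\|_{L^p}$, re-weighted by $2^{q(s+1)}$, to be $\ell^1$-summable in $q$ after applying Bernstein to shift derivatives.

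The main technical obstacle is not any single piece but the interplay between the $\ell^1$-summation in $q'$ and the time integration: the tilde spaces $\widetilde{L}^\theta_T(B^s_{p,1})$ are tailored exactly so that the dyadic sum is taken \emph{after} the $L^\theta_t$-norm (cf.\ Remark \ref{rem2.2}). Concretely, after the pointwise-in-$t$ estimates above, I apply Hölder in time with $1/\theta = 1/\theta_1+1/\theta_2$, write $\|\nabla f\|_{L^{\theta_1}_T(L^\infty)} \lesssim \|\nabla f\|_{\widetilde{L}^{\theta_1}_T(B^{N/p}_{p,1})}$ (using the embedding into $L^\infty$ dyadically, which only works thanks to the third index $1$), and assemble the $q'$-sum of $L^{\theta_2}_T(L^p)$-norms of $\Delta_{q'} g$ into $\|g\|_{\widetilde{L}^{\theta_2}_T(B^s_{p,1})}$ (resp.\ the homogeneous analogue). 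This careful ordering, together with the borderline embedding at $s = N/p$ that forces the third index to be $1$, is the sole subtlety beyond the classical Danchin-style commutator argument.
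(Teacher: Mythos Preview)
Your sketch via Bony's paraproduct decomposition is the standard and correct route to these commutator estimates, and the pieces you identify ($\mathrm{I}_q$, $\mathrm{II}_q$, $\mathrm{III}_q$) together with the Taylor mean-value trick on $\mathrm{I}_q$ and the H\"older-in-time step to pass to the tilde spaces are exactly what is needed.

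However, note that the paper does \emph{not} prove Proposition~\ref{prop2.3} at all: it is listed in Section~\ref{sec:2} among the preliminary facts (alongside Propositions~\ref{prop2.1} and~\ref{prop2.2}) and is simply quoted as a known commutator estimate, in the spirit of the references \cite{BCD,D}. So there is nothing to compare against on the paper's side --- your argument is a self-contained proof of a result the authors take for granted. If you want to match the paper's treatment, a one-line citation to the standard literature would suffice; if you want to supply a proof, what you have written is essentially the classical Danchin/Bahouri--Chemin--Danchin argument adapted to Chemin--Lerner spaces, and it is fine as a sketch.
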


Finally, let us point out that all the properties described in the
this section remain true in the periodic setting, see \cite{D}.

\section{Local existence and blow-up criterion}\setcounter{equation}{0} \label{sec:3}

It is convenient to obtain the main results, we first reformulate
the compressible Euler-Maxwell system (\ref{R-E1}). Set
\begin{equation}
\left\{
\begin{array}{l}
\varrho_{\pm}(t,x)=\frac {2}{\gamma-1}\{[n_{\pm}(\frac
{t}{\sqrt{\gamma}},x)]^{\frac {\gamma-1}{2}}-1\},\ \ \
\upsilon_{\pm}(t,x)=\frac {1}{\sqrt{\gamma}}u_{\pm}(\frac {t}{\sqrt{\gamma}},x),\\
\tilde{E}(t,x)=\frac {1}{\sqrt{\gamma}}E(\frac
{t}{\sqrt{\gamma}},x),\ \ \  \tilde{B}(t,x)=\frac
{1}{\sqrt{\gamma}}B(\frac {t}{\sqrt{\gamma}},x)-\bar{B}.
 \end{array}
\right.\label{R-E5}
\end{equation}
Then the system (\ref{R-E1}) can be reformulated, for classical
solution $W=(\varrho_{\pm},\upsilon_{\pm},\tilde{E},\tilde{B})$, as
\begin{equation}
\left\{
\begin{array}{l}
\partial_{t} \varrho_{\pm}+\upsilon_{\pm}\cdot\nabla\varrho_{\pm}+(\frac {\gamma-1}{2}\varrho_{\pm}+1)\nabla\cdot\upsilon_{\pm}=0,\\
\partial_{t} \upsilon_{\pm}+(\frac {\gamma-1}{2}\varrho_{\pm}+1)\nabla\varrho_{\pm}+\upsilon_{\pm}\cdot\nabla\upsilon_{\pm}=
   \mp(\frac {1}{\sqrt{\gamma}}\tilde{E}+\upsilon_{\pm}\times (\tilde{B}+\bar{B}))-\frac {1}{\sqrt{\gamma}}\upsilon_{\pm},\\
\partial_{t} \tilde{E}-\frac {1}{\sqrt{\gamma}}\nabla\times\tilde{B}=\frac {1}{\sqrt{\gamma}}\upsilon_{+}+
    \frac {1}{\sqrt{\gamma}}[\Phi(\varrho_{+})+\varrho_{+}]\upsilon_{+}-\frac {1}{\sqrt{\gamma}}\upsilon_{-}-
    \frac {1}{\sqrt{\gamma}}[\Phi(\varrho_{-})+\varrho_{-}]\upsilon_{-},\\
\partial_{t} \tilde{B}+\frac {1}{\sqrt{\gamma}}\nabla\times\tilde{E}=0,\\
\nabla\cdot\tilde{E}=-\frac
{1}{\sqrt{\gamma}}[\Phi(\varrho_{+})+\varrho_{+}]+\frac
{1}{\sqrt{\gamma}}[\Phi(\varrho_{-})+\varrho_{-}],\ \ \
\nabla\cdot\tilde{B}=0
 \end{array}
\right.\label{R-E6}
\end{equation}
with the initial data
\begin{equation}
W|_{t=0}=W_{0}:=(\varrho_{\pm0},\upsilon_{\pm0},\tilde{E}_{0},\tilde{B}_{0})\label{R-E7}
\end{equation}
satisfying the corresponding compatible conditions
\begin{equation}
\left\{
\begin{array}{l}
\nabla\cdot\tilde{E}_{0}=-\frac {1}{\sqrt{\gamma}}[\Phi(\varrho_{+0})+\varrho_{+0}]+\frac {1}{\sqrt{\gamma}}[\Phi(\varrho_{-0})+\varrho_{-0}],\\
\nabla\cdot\tilde{B}_{0}=0.
 \end{array}
\right.\label{R-E8}
\end{equation}
Here the nonlinear function $\Phi(\cdot)$ in (\ref{R-E6}) is defined
by
\[
\Phi(\rho)=(\frac {\gamma-1}{2}\rho+1)^{\frac {2}{\gamma-1}}-\rho-1.
\]
Notice that $\Phi(\rho)$ is a smooth function on the domain
$\{\rho|\frac {\gamma-1}{2}\rho+1>0\}$ satisfying
$\Phi(0)=\Phi'(0)=0$, which is a little different from that in
\cite{X2}.

For this reformulation, we have
\begin{rem}\label{rem3.1} The variable change is from the open set $\{(n_{+},u_{+},n_{-},u_{-},E,B)\in
(0,+\infty)\times \mathbb{R}^{N}\times (0,+\infty)\times
\mathbb{R}^{N} \times\mathbb{R}^{N}\times \mathbb{R}^{N}\}$ to the
open set $\{W\in\mathbb{R}\times \mathbb{R}^{N}\times
\mathbb{R}\times \mathbb{R}^{N}\times \mathbb{R}^{N}\times
\mathbb{R}^{N}|\frac{\gamma-1}{2}\varrho_{\pm}+1>0\}$. It is easy to
show that for classical solutions $(n_{\pm},u_{\pm},E,B)$ away from
vacuum, (\ref{R-E1})-(\ref{R-E2}) is equivalent to
(\ref{R-E6})-(\ref{R-E7}) with
$\frac{\gamma-1}{2}\varrho_{\pm}+1>0$.
\end{rem}
The simpler case of $\gamma=1$ can be treated in the similar way by
using the reformulation in terms of the enthalpy variable, see,
\textit{e.g.}, \cite{X2}. Without loss of generality, we focus on
the system (\ref{R-E6})-(\ref{R-E7}).

Next, let us write (\ref{R-E6}) as a symmetric hyperbolic system.
Set
$$W_{I}=(\varrho_{+},\upsilon_{+},\varrho_{-},\upsilon_{-})^{\top},\ W_{II}=(\tilde{E},\tilde{B})^{\top},\ W=(W_{I},W_{II})^{\top}.$$
Then (\ref{R-E6}) is reduced to
\begin{eqnarray}
\partial_{t}W+{\sum_{j=1}^{N}}A_{j}(W_{I})\partial_{x_{j}}W=L(W), \label{R-E9}
\end{eqnarray}
where
\begin{eqnarray*}
A_{j}(W_{I})=\left(%
\begin{array}{cc}
  A^{I}_{j}(W_{I}) & 0 \\
  0 & A^{II}_{j} \\
\end{array}%
\right),
\end{eqnarray*}
\begin{eqnarray*}
L(W)=\left(%
\begin{array}{c}
  0 \\
  -(\frac {1}{\sqrt{\gamma}}\tilde{E}+\upsilon_{+}\times (\tilde{B}+\bar{B}))-\frac {1}{\sqrt{\gamma}}\upsilon_{+}\\
  0\\
  (\frac {1}{\sqrt{\gamma}}\tilde{E}+\upsilon_{-}\times (\tilde{B}+\bar{B}))-\frac {1}{\sqrt{\gamma}}\upsilon_{-}\\
  \frac {1}{\sqrt{\gamma}}\upsilon_{+}+\frac {1}{\sqrt{\gamma}}[\Phi(\varrho_{+})+\varrho_{+}]\upsilon_{+}-\frac {1}{\sqrt{\gamma}}\upsilon_{-}
  -\frac {1}{\sqrt{\gamma}}[\Phi(\varrho_{-})+\varrho_{-}]\upsilon_{-}\\
  0\\ \end{array}%
  \right)
\end{eqnarray*}
with
\begin{eqnarray*}
 A^{I}_{j}(W_{I})=\left(%
 \begin{array}{cccc}
    \upsilon^{j}_{+} & (\frac {\gamma-1}{2}\varrho_{+}+1)e_{j}^{\top} & 0 & 0 \\
   (\frac {\gamma-1}{2}\varrho_{+}+1)e_{j} & \upsilon^{j}_{+}I_{N} & 0 & 0 \\
   0 & 0 & \upsilon^{j}_{-} & (\frac {\gamma-1}{2}\varrho_{-}+1)e_{j}^{\top} \\
   0 & 0 & (\frac {\gamma-1}{2}\varrho_{-}+1)e_{j} & \upsilon^{j}_{-}I_{N}  \\
 \end{array}%
 \right), \end{eqnarray*}
\begin{eqnarray*}
A^{II}_{j}=\left(%
\begin{array}{cc}
 0 & P_{j} \\
  P_{j}^{\top} & 0 \\
\end{array}%
\right)
\end{eqnarray*}
and
\begin{eqnarray*}
P_{1}=\left(%
\begin{array}{ccc}
  0 & 0 & 0  \\
  0 & 0 & \frac {1}{\sqrt{\gamma}} \\
  0 & -\frac {1}{\sqrt{\gamma}} & 0  \\
\end{array}%
\right),\
P_{2}=\left(%
\begin{array}{ccc}
  0 & 0 & -\frac {1}{\sqrt{\gamma}} \\
  0 & 0 & 0 \\
  \frac {1}{\sqrt{\gamma}} & 0 & 0  \\
\end{array}%
\right),\
P_{3}=\left(%
\begin{array}{ccc}
  0 & \frac {1}{\sqrt{\gamma}} & 0 \\
  -\frac {1}{\sqrt{\gamma}} & 0 & 0 \\
  0 & 0 & 0  \\
\end{array}%
\right).
\end{eqnarray*}

Here $I_{N}$ denotes the unit matrix of order $N$ and $e_{j}$ is the
$N$-dimensional vector where the $j$th component is one, others are
zero. From the explicit structure of the block matrix $A_{j}(W_{I})$
above, we see that (\ref{R-E6}) is a symmetric hyperbolic system on
$G=\{W|\frac{\gamma-1}{2}\varrho_{\pm}+1>0\}$ in the sense of
Friedrichs. Based on the recent work \cite{XK} for generally
symmetric hyperbolic systems, we get the local existence and
uniqueness of classical solutions
$W=(\varrho_{\pm},\upsilon_{\pm},\tilde{E},\tilde{B})$, which reads
as follows.

\begin{prop}\label{prop3.1} Assume that
$W_{0}\in{B^{s_{c}}_{2,1}}$ satisfying
$\frac{\gamma-1}{2}\varrho_{\pm0}+1>0$ and (\ref{R-E8}), then there
exists a time $T_{0}>0$ (depending only on the initial data) such
that
\begin{itemize}
\item[(i)] Existence: the
system (\ref{R-E6})-(\ref{R-E7}) has a unique solution
$W\in\mathcal{C}^{1}([0,T_{0}]\times \mathbb{R}^{N})$ with
$\frac{\gamma-1}{2}\varrho_{\pm}+1>0$ for all $t\in[0,T_{0}]$ and
$W\in\widetilde{\mathcal{C}}_{T_{0}}(B^{s_{c}}_{2,1})\cap
\widetilde{\mathcal{C}}^1_{T_{0}}(B^{s_{c}-1}_{2,1})$;
\item[(ii)]Blow-up criterion: if the maximal time $T^{*}(>T_{0})$ of existence of such a solution
is finite, then
$$\limsup_{t\rightarrow T^{*}}\|W(t)\|_{B^{s_{c}}_{2,1}}=\infty$$
if and only if $$\int^{T^{*}}_{0}\|\nabla
W(t)\|_{L^{\infty}}dt=\infty.$$
\end{itemize}
\end{prop}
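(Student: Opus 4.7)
The plan is to invoke the general local well-posedness and blow-up theory for symmetric hyperbolic systems in critical Chemin-Lerner spaces established in [XK], applied to the reformulated system (\ref{R-E9}). The symmetry of the coefficient matrices $A_{j}(W_{I})$ on the hyperbolicity domain $G=\{W\mid \frac{\gamma-1}{2}\varrho_{\pm}+1>0\}$ has just been verified; the nonlinearity $L(W)$ is smooth in $W$, and crucially the composition function $\Phi$ satisfies $\Phi(0)=\Phi'(0)=0$, so each nonlinear term of $L(W)$ falls within the scope of the composition estimate (Proposition \ref{prop2.2}). Thus the structural hypotheses of [XK] are met.

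For transparency, here is how the [XK]-type argument proceeds in the present setting. Build approximate solutions $\{W^{n}\}$ by a Friedrichs iteration
\begin{equation*}
\partial_{t}W^{n+1}+\sum_{j=1}^{N}A_{j}(W^{n}_{I})\partial_{x_{j}}W^{n+1}=L(W^{n}),\qquad W^{n+1}|_{t=0}=W_{0},
\end{equation*}
each one a linear symmetric hyperbolic system. Localize by $\Delta_{q}$, take the $L^{2}$ inner product of the resulting equation with $\Delta_{q}W^{n+1}$, integrate by parts the symmetric principal part, and control the commutator $[A_{j}(W^{n}_{I}),\Delta_{q}]\partial_{x_{j}}W^{n+1}$ by Proposition \ref{prop2.3} at the critical index $s_{c}=1+N/2$. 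Summing in $\ell^{1}(\mathbb{Z})$ against the weight $2^{qs_{c}}$, and estimating the source $L(W^{n})$ via Propositions \ref{prop2.1}-\ref{prop2.2}, one arrives at
\begin{equation*}
\|W^{n+1}\|_{\widetilde{L}^{\infty}_{T}(B^{s_{c}}_{2,1})}\leq \|W_{0}\|_{B^{s_{c}}_{2,1}}\exp\Big(C\int_{0}^{T}(1+\|W^{n}\|_{B^{s_{c}}_{2,1}})\,d\tau\Big).
\end{equation*}
A standard bootstrap then yields uniform bounds on some $[0,T_{0}]$. A difference argument shows $\{W^{n}\}$ is Cauchy in the weaker space $\widetilde{L}^{\infty}_{T_{0}}(B^{s_{c}-1}_{2,1})$; Fatou-type lower semicontinuity lifts the limit $W$ back into $\widetilde{\mathcal{C}}_{T_{0}}(B^{s_{c}}_{2,1})$, and $\widetilde{\mathcal{C}}^{1}_{T_{0}}(B^{s_{c}-1}_{2,1})$-regularity comes from the equation itself. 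Uniqueness is standard in the $B^{s_{c}-1}_{2,1}$-framework. The hyperbolicity condition $\frac{\gamma-1}{2}\varrho_{\pm}+1>0$ is preserved on a short time interval by continuity of $W$ and the embedding $B^{s_{c}}_{2,1}\hookrightarrow L^{\infty}$.

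For the blow-up criterion, the same energy estimate is refined so as to retain $\|\nabla W(\tau)\|_{L^{\infty}}$ on the right-hand side rather than the full critical norm --- possible because Proposition \ref{prop2.3} provides a Lipschitz-type commutator bound --- yielding
\begin{equation*}
\|W(t)\|_{B^{s_{c}}_{2,1}}\leq \|W_{0}\|_{B^{s_{c}}_{2,1}}\exp\Big(C\int_{0}^{t}(1+\|\nabla W(\tau)\|_{L^{\infty}})\,d\tau\Big).
\end{equation*}
Hence finiteness of $\int_{0}^{T^{*}}\|\nabla W(\tau)\|_{L^{\infty}}\,d\tau$ precludes blow-up of the critical norm at $T^{*}$; the converse is immediate from the embedding $B^{s_{c}}_{2,1}\hookrightarrow W^{1,\infty}$. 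The main obstacle throughout is closing the estimate \emph{exactly} at the critical index $s_{c}$: any logarithmic loss in the commutator or composition bound would be fatal. The critical-index versions of Propositions \ref{prop2.2}-\ref{prop2.3}, together with the Chemin-Lerner refinement $\widetilde{L}^{\infty}_{T}$ in place of $L^{\infty}_{T}$, are precisely what make the estimate close --- this is the technical heart of [XK] that is inherited here.
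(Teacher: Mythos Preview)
Your proposal is correct and follows the same overall strategy as the paper: invoke the general theory of \cite{XK} for part (i), and for part (ii) derive a Gronwall-type estimate controlling $\|W(t)\|_{B^{s_{c}}_{2,1}}$ by $\int_{0}^{t}\|\nabla W\|_{L^{\infty}}\,d\tau$, the converse direction being immediate from the embedding $B^{s_{c}}_{2,1}\hookrightarrow W^{1,\infty}$.

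There is one technical difference worth flagging. For the blow-up criterion the paper does \emph{not} localize with the inhomogeneous blocks $\Delta_{q}$ and Proposition~\ref{prop2.3} as you suggest; instead it applies the \emph{homogeneous} operator $\dot{\Delta}_{q}$ to (\ref{R-E9}), and for the commutator $[\dot{\Delta}_{q},A_{j}(W_{I})]W_{x_{j}}$ invokes a commutator estimate from \cite{BCD} (Lemma~2.100) that bounds it directly by $c_{q}2^{-qs_{c}}\big(\|\nabla A_{j}(W_{I})\|_{L^{\infty}}\|W\|_{\dot{B}^{s_{c}}_{2,1}}+\|\nabla A_{j}(W_{I})\|_{\dot{B}^{s_{c}}_{2,1}}\|\nabla W\|_{L^{\infty}}\big)$ --- i.e.\ with $\|\nabla W\|_{L^{\infty}}$ appearing explicitly. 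This yields an estimate in $\dot{B}^{s_{c}}_{2,1}$; a separate bare $L^{2}$ energy estimate on (\ref{R-E9}) is then added, and the two are combined via $B^{s_{c}}_{2,1}=L^{2}\cap\dot{B}^{s_{c}}_{2,1}$ (Lemma~\ref{lem2.2}(1)) to reach (\ref{R-E17}). Your route through inhomogeneous blocks would also work, but note that Proposition~\ref{prop2.3} as stated in the paper is phrased with Chemin--Lerner norms on the right, not $\|\nabla W\|_{L^{\infty}}$; you would need the $L^{\infty}$-variant of the commutator estimate (which is standard, e.g.\ \cite{BCD}) rather than Proposition~\ref{prop2.3} itself. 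The paper's homogeneous-plus-$L^{2}$ split is a clean way to access that $L^{\infty}$ form while staying within the tools already quoted.
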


\begin{proof}
From \cite{XK}, it suffices to establish the blow-up criterion. We
consider the symmetric system (\ref{R-E9}) with $L(W)\equiv0$ for
simplicity, since it is only responsible for the global
well-posedness and large time behavior of solutions.

Applying the homogeneous operator $\dot{\Delta}_{q}$ to
(\ref{R-E9}), we infer that $\dot{\Delta}_{q}W$ satisfies
\begin{equation}
\partial_{t}\dot{\Delta}_{q}W+\sum_{j=1}^{N}A_{j}(W_{I})\dot{\Delta}_{q}\partial_{x_{j}}W
=-\sum_{j=1}^{N}[\dot{\Delta}_{q},A_{j}(W_{I})]W_{x_{j}},
\label{R-E10}
\end{equation}
where the commutator $[\cdot,\cdot]$ is defined by $[f,g]:=fg-gf$.

Perform the inter product with $\dot{\Delta}_{q}W$ on both sides of
the equation (\ref{R-E10}) to get
\begin{eqnarray}
&&\langle\dot{\Delta}_{q}W,\dot{\Delta}_{q}W\rangle_{t}
+\sum_{j=1}^{N}\langle A_{j}(W_{I})\dot{\Delta}_{q}W,
\dot{\Delta}_{q}W\rangle_{x_{j}}
\nonumber\\&=&-2\sum_{j=1}^{N}\langle
[\dot{\Delta}_{q},A_{j}(W_{I})]W_{x_{j}},\dot{\Delta}_{q}W\rangle
+\langle A_{j}(W_{I})_{x_{j}}\dot{\Delta}_{q}W,
\dot{\Delta}_{q}W\rangle. \label{R-E11}
\end{eqnarray}
By integrating (\ref{R-E11}) with respect to $x$ over
$\mathbb{R}^{N}$, we get
\begin{eqnarray}
\frac{d}{dt}\|\dot{\Delta}_{q}W\|^2_{L^2}
\lesssim\|[\dot{\Delta}_{q},A_{j}(W_{I})]W_{x_{j}}\|_{L^2}\|\Delta_{q}W\|_{L^2}+
\|A_{j}(W_{I})_{x_{j}}\|_{L^\infty}\|\dot{\Delta}_{q}W\|^2_{L^2}.
\label{R-E12}
\end{eqnarray}
Let $\epsilon>0$ be a small number. Dividing (\ref{R-E12}) by
$(\|\dot{\Delta}_{q}W\|^2_{L^2}+\epsilon)^{1/2}$ gives
\begin{eqnarray}
&&\frac{d}{dt}\Big(\|\dot{\Delta}_{q}W\|^2_{L^2}+\epsilon\Big)^{1/2}\nonumber\\&\lesssim&
\|[\dot{\Delta}_{q},A_{j}(W_{I})]W_{x_{j}}\|_{L^2}+
\|A_{j}(W_{I})_{x_{j}}\|_{L^\infty}\|\dot{\Delta}_{q}W\|_{L^2}\nonumber\\&\lesssim&
c_{q}(t)2^{-qs_{c}}(\|A_{j}(W_{I})_{x_{j}}\|_{L^\infty}\|W\|_{\dot{B}^{s_{c}}_{2,1}}+\|A_{j}(W_{I})_{x_{j}}\|_{\dot{B}^{s_{c}}_{2,1}}\|\nabla
W\|_{L^\infty})\nonumber\\&&+\|A_{j}(W_{I})_{x_{j}}\|_{L^\infty}\|\dot{\Delta}_{q}W\|_{L^2},\label{R-E13}
\end{eqnarray}
where we used the stationary cases of estimates of commutator in
\cite{BCD}(Lemma 2.100, P.112) and the sequence $\{c_{q}(t)\}$
satisfying $\|c_{q}(t)\|_{\ell^1}\leq1$, for all $t\in[0,T_{0}]$.

Taking a time integration and passing to the limit
$\epsilon\rightarrow0$, we arrive at
\begin{eqnarray}
\|W(t)\|_{\dot{B}^{s_{c}}_{2,1}}&\lesssim&\|W_{0}\|_{\dot{B}^{s_{c}}_{2,1}}
+\int^{T_{0}}_{0}\|\nabla
W(\tau)\|_{L^\infty}\|W(\tau)\|_{\dot{B}^{s_{c}}_{2,1}}d\tau,\label{R-E14}
\end{eqnarray}
since we note that the fact $W(t,x)\in \mathcal{O}_{1}$(a bounded
open convex set in $\mathbb{R}^{4N+2}$) for any $(t,x)\in
[0,T_{0}]\times \mathbb{R}^{N}$, see \cite{XK}.

On the other hand, we take the $L^2$-inner product on (\ref{R-E9})
with $W$. It is not difficult to obtain
\begin{eqnarray}
\|W(t)\|_{L^2}&\lesssim&\|W_{0}\|_{L^2} +\int^{T_{0}}_{0}\|\nabla
W(\tau)\|_{L^\infty}\|W(\tau)\|_{L^2}d\tau.\label{R-E15}
\end{eqnarray}
Adding (\ref{R-E14}) to (\ref{R-E15}),  from (1) in Lemma
\ref{lem2.2}, we have
\begin{eqnarray}
\|W(t)\|_{B^{s_{c}}_{2,1}}&\lesssim&\|W_{0}\|_{B^{s_{c}}_{2,1}}
+\int^{T_{0}}_{0}\|\nabla
W(\tau)\|_{L^\infty}\|W(\tau)\|_{B^{s_{c}}_{2,1}}d\tau.\label{R-E16}
\end{eqnarray}
Gronwall's inequality implies
\begin{eqnarray}
\sup_{t\in[0,T_{0}]}\|W(t)\|_{B^{s_{c}}_{2,1}}\lesssim\|W_{0}\|_{B^{s_{c}}_{2,1}}\exp\Big(\int_{0}^{T_{0}}\|\nabla
W(\tau)\|_{L^\infty}d\tau\Big).\label{R-E17}
\end{eqnarray}
Besides, we have the following obvious inequalities
\begin{eqnarray}
\int^{T_{0}}_{0}\|\nabla
W(\tau)\|_{L^\infty}d\tau\lesssim\int^{T_{0}}_{0}\|W(\tau)\|_{B^{s_{c}}_{2,1}}d\tau
\lesssim
T_{0}\sup_{t\in[0,T_{0}]}\|W(t)\|_{B^{s_{c}}_{2,1}}.\label{R-E18}
\end{eqnarray}
Hence the blow-up criterion follows (\ref{R-E17}) and (\ref{R-E18})
immediately. This completes Proposition \ref{prop3.1}.
\end{proof}

\section{Global well-posedness}\setcounter{equation}{0} \label{sec:4}
In this section, we focus on the global existence of classical
solutions to (\ref{R-E6})-(\ref{R-E7}). For that purpose, we first
derive a crucial \textit{a priori} estimate in the whole space,
which is comprised in the following proposition.
\begin{prop}\label{prop4.1}
There exist some positive constants $\delta_{1}, \mu_{1}$ and
$C_{1}$ such that for any $T>0$, if
\begin{eqnarray}
\|(\varrho_{\pm},\upsilon_{\pm},\tilde{E},\tilde{B})\|_{\widetilde{L}^\infty_{T}(B^{s_{c}}_{2,1})}\leq
\delta_{1},\label{R-E19}
\end{eqnarray}
then
\begin{eqnarray}&&\|(\varrho_{\pm},\upsilon_{\pm},\tilde{E},\tilde{B})\|_{\widetilde{L}^\infty_{T}(B^{s_{c}}_{2,1})}
\nonumber\\&&+\mu_{1}\Big\{\|(\varrho_{+}-\varrho_{-},\upsilon_{\pm})\|_{\widetilde{L}^2_{T}(B^{s_{c}}_{2,1})}
+\|(\nabla\varrho_{\pm},\tilde{E})\|_{\widetilde{L}^2_{T}(B^{s_{c}-1}_{2,1})}
+\|\nabla\tilde{B}\|_{\widetilde{L}^2_{T}(B^{s_{c}-2}_{2,1})}\Big\}
\nonumber\\&\leq&
C_{1}\|(\varrho_{\pm0},\upsilon_{\pm0},\tilde{E}_{0},\tilde{B}_{0})\|_{B^{s_{c}}_{2,1}}.\label{R-E20}
\end{eqnarray}
\end{prop}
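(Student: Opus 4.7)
The plan is to carry out a frequency-localized energy estimate in the \emph{homogeneous} Chemin--Lerner setting, then to transfer to the inhomogeneous one via the bridge established in \cite{XK}. The homogeneous blocks are preferred here because, as stressed in the introduction, they capture the velocity dissipation uniformly in frequency and make the Poisson constraint $\sqrt\gamma\,\nabla\cdot\tilde E=-[\Phi(\varrho_+)+\varrho_+]+[\Phi(\varrho_-)+\varrho_-]$ supply the dissipation of $\varrho_+-\varrho_-$. Applying $\dot\Delta_q$ to the symmetric system (3.5), taking the $L^2$ inner product with $\dot\Delta_q W$, and exploiting the symmetry of $A_j(W_I)$ together with the damping $-\upsilon_\pm/\sqrt\gamma$, yields a basic differential identity that produces $\|\dot\Delta_q\upsilon_\pm\|^2_{L^2}$ on the dissipative side \emph{only}. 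The commutator and nonlinear remainders are then handled immediately by the product law (Proposition 2.1), the commutator estimate (Proposition 2.3), and the composition estimate (Proposition 2.2); the latter applies cleanly because $\Phi(0)=\Phi'(0)=0$, so $\Phi(\varrho_\pm)$ is of quadratic order in $\|\varrho_\pm\|_{B^{s_c}_{2,1}}$ and, under the smallness hypothesis (4.1), every term involving it carries an extra small factor.

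To recover the missing dissipation rates I would introduce three ``cross'' functionals at each dyadic level,
\begin{eqnarray*}
\mathcal{F}^{\varrho}_q&:=&\langle\dot\Delta_q\upsilon_\pm,\nabla\dot\Delta_q\varrho_\pm\rangle,\\
\mathcal{F}^{E}_q&:=&-\frac{1}{\sqrt\gamma}\langle\dot\Delta_q(\upsilon_+-\upsilon_-),\dot\Delta_q\tilde E\rangle,\\
\mathcal{F}^{B}_q&:=&\frac{1}{\sqrt\gamma}\langle\dot\Delta_q\tilde E,\nabla\times\dot\Delta_q\tilde B\rangle,
\end{eqnarray*}
differentiate them in $t$, and substitute the linearized dynamics (momentum, Amp\`ere, Faraday). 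Their principal parts turn out to be $-\|\nabla\dot\Delta_q\varrho_\pm\|^2_{L^2}$, $-\frac{2}{\gamma}\|\dot\Delta_q\tilde E\|^2_{L^2}$, and $-\frac{1}{\gamma}\|\nabla\times\dot\Delta_q\tilde B\|^2_{L^2}$ respectively; an unavoidable $+c\|\nabla\dot\Delta_q\tilde E\|^2_{L^2}$ is generated by $\mathcal{F}^{B}_q$ and is the source of the one-regularity loss on $\tilde B$. Combining them into the Lyapunov functional
$$\mathcal{L}_q^2:=\frac12\|\dot\Delta_q W\|^2_{L^2}+2^{-2q}\bigl(\kappa_1\mathcal{F}^{\varrho}_q+\kappa_2\mathcal{F}^{E}_q+\kappa_3\mathcal{F}^{B}_q\bigr),$$
with $\kappa_i>0$ small enough to preserve the equivalence $\mathcal{L}_q^2\approx\|\dot\Delta_q W\|^2_{L^2}$, dividing by $(\mathcal{L}_q^2+\epsilon)^{1/2}$, letting $\epsilon\to 0$, multiplying by $2^{qs_c}$, integrating in $t$, and summing over $q\in\mathbb Z$ produces the homogeneous version of (4.2). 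The $B^{s_c}_{2,1}$-dissipation of $\varrho_+-\varrho_-$ is then read off the Poisson constraint: at low frequencies one has $\|\dot\Delta_q(\varrho_+-\varrho_-)\|_{L^2}\lesssim 2^q\|\dot\Delta_q\tilde E\|_{L^2}$ plus quadratic remainders, and the extra $2^{2q}$ factor makes the resulting low-frequency series convergent, while high frequencies are controlled by the already-obtained $\nabla\varrho_\pm$-dissipation. Passage to the inhomogeneous Chemin--Lerner norms is performed by the identity in \cite{XK}.

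The main obstacle I foresee is the simultaneous bookkeeping of the three different dissipation scales ($s_c$, $s_c-1$, $s_c-2$) and the choice of the weights $\kappa_i\,2^{-2q}$: every cross term produced in $\frac{d}{dt}\mathcal{L}_q^2$ must be either cancelled or absorbed into the appropriate dissipation with the correct frequency weighting. In particular, the Lorentz coupling $\upsilon_\pm\times\bar B$ ties $\upsilon_\pm$ to $\tilde B$ without an intrinsic dissipative mechanism and must be controlled purely through the velocity dissipation; and the fact that only $\varrho_+-\varrho_-$ (and not $\varrho_+,\varrho_-$ separately) admits dissipation on $\mathbb R^N$ forces the Poisson constraint to be threaded carefully through the whole estimate. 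This last point---together with the modification $\Phi(0)=\Phi'(0)=0$ which enables the composition estimate to deliver a quadratic gain on densities---is precisely the new technical ingredient of the two-fluid case that distinguishes the present analysis from the one-fluid work \cite{X2}.
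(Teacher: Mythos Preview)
Your cross functionals $\mathcal F^\varrho_q,\mathcal F^E_q,\mathcal F^B_q$ are the right objects, but the paper organizes the argument differently and your single-Lyapunov packaging, as written, has a scaling defect.

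The paper does \emph{not} build one Lyapunov functional. It proceeds by a cascade of four separate lemmas, each at its own regularity level: Lemma~\ref{lem4.1} extracts $\|\upsilon_\pm\|_{\widetilde L^2_T(B^{s_c}_{2,1})}$ from the basic symmetric energy (homogeneous blocks for the $\dot B^{s_c}_{2,1}$ part, a straight $L^2$ estimate, then the bridge $L^\theta_T(L^p)\cap\widetilde L^\theta_T(\dot B^{s}_{p,r})=\widetilde L^\theta_T(B^{s}_{p,r})$ from \cite{XK}); Lemma~\ref{lem4.2} uses $\langle\Delta_q\upsilon_\pm,\Delta_q\nabla\varrho_\pm\rangle$ at weight $2^{2q(s_c-1)}$ to get $\nabla\varrho_\pm$ and $\varrho_+-\varrho_-$; Lemma~\ref{lem4.3} uses $\langle\Delta_q(\upsilon_+-\upsilon_-),\Delta_q\tilde E\rangle$ at weight $2^{2q(s_c-1)}$ for $\tilde E$; Lemma~\ref{lem4.4} uses $-\langle\Delta_q(\nabla\times\tilde E),\Delta_q\tilde B\rangle$ at weight $2^{2q(s_c-2)}$ for $\nabla\tilde B$. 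The four estimates are then combined linearly with small constants. Only Lemma~\ref{lem4.1} uses homogeneous blocks; Lemmas~\ref{lem4.2}--\ref{lem4.4} work directly with \emph{inhomogeneous} $\Delta_q$.

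Your uniform weight $\kappa_i\,2^{-2q}$ is where the single-functional route breaks: the term $\|\nabla\dot\Delta_q\tilde E\|^2_{L^2}$ generated by $\frac{d}{dt}\mathcal F^B_q$, after multiplication by $2^{2qs_c}\cdot 2^{-2q}=2^{2q(s_c-1)}$, sits at regularity $s_c$ for $\tilde E$ and cannot be absorbed by the $\tilde E$-dissipation, which is only at $s_c-1$, for large $q$. You need $\mathcal F^B_q$ weighted by $2^{-4q}$ (equivalently, treated at level $s_c-2$), which is exactly why the paper handles it in a separate lemma at that scale rather than folding it into one functional.

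Finally, the paper recovers $\varrho_+-\varrho_-$ not by reading the Poisson constraint off afterward, but by substituting $\nabla\cdot\tilde E$ \emph{into} the $\tilde E$-contribution of $\frac{d}{dt}\langle\Delta_q\upsilon_\pm,\Delta_q\nabla\varrho_\pm\rangle$; summing the $\pm$ equations then produces $\frac1\gamma\|\Delta_q(\varrho_+-\varrho_-)\|^2_{L^2}$ on the dissipative side together with a remainder $\Phi(\varrho_+)-\Phi(\varrho_-)$. That remainder is controlled by the \emph{difference} composition estimate Proposition~\ref{prop5.1} (which exploits $\Phi'(0)=0$), not merely Proposition~\ref{prop2.2}; this is the technical point your proposal glosses over. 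The upgrade of $\varrho_+-\varrho_-$ from $s_c-1$ to $s_c$ is then a separate Bernstein-plus-embedding step (Claim~2 in Lemma~\ref{lem4.2}). Your low/high-frequency reading of the Poisson constraint would also work for this upgrade, but you would still need Proposition~\ref{prop5.1} to close on the $\Phi$-remainder.
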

Actually, the proof of Proposition \ref{prop4.1} is to capture the
dissipation rates from contributions of
$(\varrho_{\pm},\upsilon_{\pm},\tilde{E},\tilde{B})$ in turn by
using the low- and high-frequency decomposition methods. For
clarity, we divide it into several lemmas.

\begin{lem}\label{lem4.1}
If $W\in\widetilde{\mathcal{C}}_{T}(B^{s_{c}}_{2,1})\cap
\widetilde{\mathcal{C}}^1_{T}(B^{s_{c}-1}_{2,1})$ is a solution of
(\ref{R-E6})-(\ref{R-E7}) for any $T>0$, then the following estimate
holds:
\begin{eqnarray}
&&\|W\|_{\widetilde{L}^\infty_{T}(B^{s_{c}}_{2,1})}
+\mu_{2}\|(\upsilon_{+},\upsilon_{-})\|_{\widetilde{L}^2_{T}(B^{s_{c}}_{2,1})}\nonumber
\\&\lesssim&\|W_{0}\|_{B^{s_{c}}_{2,1}}+\sqrt{\|W\|_{\widetilde{L}_{T}^{\infty}(B^{s_{c}}_{2,1})}}
\Big(\|(\nabla\varrho_{+},\nabla\varrho_{-},\widetilde{E})\|_{\widetilde{L}_{T}^2(B^{s_{c}-1}_{2,1})}+\|(\upsilon_{+},\upsilon_{-})\|_{\widetilde{L}_{T}^2(B^{s_{c}}_{2,1})}\Big),
\label{R-E22}
\end{eqnarray}
where $\mu_{2}$ is a positive constant.
\end{lem}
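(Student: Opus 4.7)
My plan is to run a Friedrichs-style localized energy estimate on the symmetric form (\ref{R-E9}), using the structural cancellations built into $L(W)$ to extract dissipation only for $\upsilon_\pm$, and then closing the estimate in the Chemin-Lerner framework.

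First I would apply $\dot\Delta_q$ to (\ref{R-E9}) and take the $L^2$ inner product with $\dot\Delta_q W$. Since each $A_j(W_I)$ is symmetric, integration by parts in the transport term leaves only a contribution controlled by $\|\nabla W_I\|_{L^\infty}\|\dot\Delta_q W\|_{L^2}^2$. In $L(W)$ the linear electric coupling $\mp\tfrac{1}{\sqrt\gamma}\tilde E$ in the momentum equations cancels exactly the linear current source $\pm\tfrac{1}{\sqrt\gamma}\upsilon_\pm$ in the Amp\`ere equation after summing the two inner products over $\pm$; the Lorentz term $\upsilon_\pm\times(\tilde B+\bar B)$ is pointwise orthogonal to $\upsilon_\pm$, leaving only a $\dot\Delta_q$-commutator against $\tilde B$; and the pair $\nabla\times\tilde E,\nabla\times\tilde B$ is antisymmetric under $\langle\cdot,\cdot\rangle$. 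The only linear survivor is the damping $\tfrac{1}{\sqrt\gamma}\|\dot\Delta_q\upsilon_\pm\|_{L^2}^2$, so that
$$\tfrac{1}{2}\tfrac{d}{dt}\|\dot\Delta_q W\|_{L^2}^2+\tfrac{1}{\sqrt\gamma}\|\dot\Delta_q\upsilon_\pm\|_{L^2}^2\le\|\dot\Delta_q W\|_{L^2}\,\|F_q\|_{L^2},$$
where $F_q$ collects the symmetrization remainder, the commutator $\sum_j[\dot\Delta_q,A_j(W_I)]\partial_{x_j}W$, and the cubic source $\tfrac{1}{\sqrt\gamma}[\Phi(\varrho_\pm)+\varrho_\pm]\upsilon_\pm$ tested against $\dot\Delta_q\tilde E$.

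Next I would integrate in $t$, take the supremum, multiply by $2^{qs_c}$, and sum in $q\in\mathbb{Z}$. Proposition \ref{prop2.3} (first inequality at $s=s_c$) controls the commutator in $L^2_T(L^2)$ by $c_q 2^{-qs_c}\|\nabla W_I\|_{\widetilde L^2_T(B^{s_c-1}_{2,1})}\|W\|_{\widetilde L^\infty_T(B^{s_c}_{2,1})}$; the weight $\|\nabla W_I\|_{\widetilde L^2_T(B^{s_c-1}_{2,1})}\lesssim\|\nabla\varrho_\pm\|_{\widetilde L^2_T(B^{s_c-1}_{2,1})}+\|\upsilon_\pm\|_{\widetilde L^2_T(B^{s_c}_{2,1})}$ is already an admissible dissipation norm, and no $\nabla\tilde E$ or $\nabla\tilde B$ enters because the Maxwell blocks of $A_j$ are constant. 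For the cubic source, Propositions \ref{prop2.1} and \ref{prop2.2} (the latter applied to $\Phi$, where $\Phi(0)=\Phi'(0)=0$ provides the required quadratic gain in $\varrho_\pm$) bound $\|[\Phi(\varrho_\pm)+\varrho_\pm]\upsilon_\pm\|_{\widetilde L^2_T(B^{s_c-1}_{2,1})}$ by $\|\varrho_\pm\|_{\widetilde L^\infty_T(B^{s_c}_{2,1})}\|\upsilon_\pm\|_{\widetilde L^2_T(B^{s_c}_{2,1})}$, again producing only admissible norms. A parallel $L^2$-inner-product estimate on the unlocalized system (\ref{R-E6}) delivers the low-frequency $\widetilde L^\infty_T(L^2)$-piece, and Lemma \ref{lem2.2}(1) then assembles $\dot B^{s_c}_{2,1}$ with $L^2$ into the inhomogeneous $B^{s_c}_{2,1}$.

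Combining the displayed differential inequality with Cauchy-Schwarz in $t$ and Young's inequality $2ab\le\epsilon a^2+\epsilon^{-1}b^2$ yields
$$\|\dot\Delta_q W\|_{L^\infty_T(L^2)}^2+\|\dot\Delta_q\upsilon_\pm\|_{L^2_T(L^2)}^2\lesssim\|\dot\Delta_q W_0\|_{L^2}^2+\Big(\textstyle\int_0^T\|F_q\|_{L^2}\,ds\Big)^2;$$
taking square roots, multiplying by $2^{qs_c}$ and summing in $q$ gives the left-hand side of (\ref{R-E22}) up to $\widetilde L^1_T(B^{s_c}_{2,1})$-norms of $F$. The announced factor $\sqrt{\|W\|_{\widetilde L^\infty_T(B^{s_c}_{2,1})}}$ then emerges by splitting each bilinear product $A\cdot B$ (with $A$ an $\widetilde L^\infty_T$ unknown and $B$ a dissipation unknown) as $A^{1/2}\cdot A^{1/2}B$ and pulling the first factor, bounded by $\sqrt{\|W\|_{\widetilde L^\infty_T(B^{s_c}_{2,1})}}$, outside the time integral. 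The main obstacle in the proof is precisely this bookkeeping: one must ensure (a) that no $\nabla\tilde E$ or $\nabla\tilde B$ ever surfaces on the right-hand side (guaranteed by the constancy of the Maxwell blocks of $A_j$) and (b) that the Amp\`ere-side cubic is reduced to a product of admissible dissipation norms (guaranteed by the vanishing of $\Phi'(0)$).
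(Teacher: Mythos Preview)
Your proposal is correct and follows essentially the same three-step architecture as the paper: a homogeneous $\dot\Delta_q$-localized energy estimate exploiting the symmetric structure and the built-in cancellations (electric coupling vs.\ current source, orthogonality of the Lorentz force, antisymmetry of the curl pair), then an unlocalized $L^2$ estimate for the low-frequency part, and finally the passage from $\dot B^{s_c}_{2,1}\cap L^2$ to $B^{s_c}_{2,1}$ via Lemma~\ref{lem2.2}(1) (equivalently the identity (\ref{R-E41})).

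One small bookkeeping point: for the Amp\`ere cubic tested against $\dot\Delta_q\tilde E$, the norm you actually need after your square-root step is $\widetilde L^1_T(\dot B^{s_c}_{2,1})$, not $\widetilde L^2_T(B^{s_c-1}_{2,1})$; the paper puts it there and splits it as $\|\varrho_\pm\|_{\widetilde L^2_T(\dot B^{s_c}_{2,1})}\|\upsilon_\pm\|_{\widetilde L^2_T(\dot B^{s_c}_{2,1})}=\|\nabla\varrho_\pm\|_{\widetilde L^2_T(\dot B^{s_c-1}_{2,1})}\|\upsilon_\pm\|_{\widetilde L^2_T(\dot B^{s_c}_{2,1})}$, so that after pairing with $\|\tilde E\|_{\widetilde L^\infty_T(\dot B^{s_c}_{2,1})}$ and taking the square root one lands exactly on $\sqrt{\|W\|_{\widetilde L^\infty_T}}\,(D_1+D_2)$. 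Your allocation $\|\varrho_\pm\|_{\widetilde L^\infty_T}\|\upsilon_\pm\|_{\widetilde L^2_T}$ would instead produce an extra $\|W\|_{\widetilde L^\infty_T}$ factor that does not match the stated form of (\ref{R-E22}); this is easily repaired and does not affect the strategy.
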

\begin{proof}
The proof is divided into three steps.\\

\noindent\textit{\underline{Step 1. The
$\widetilde{L}^2_{T}(\dot{B}^{s_{c}}_{2,1})$ estimates of
$(\upsilon_{+},\upsilon_{-})$}}.

Indeed, applying the homogeneous localization operator
${\dot{\Delta}}_{q}(q\in\mathbb{Z})$ to (\ref{R-E6}), we infer that
\begin{equation}
\left\{
\begin{array}{l}
\partial_{t} {\dot{\Delta}}_{q}\varrho_{+}+{\dot{\Delta}}_{q}\mathrm{div}\upsilon_{+}\\ \hspace{5mm}=-(\upsilon_{+}\cdot\nabla){\dot{\Delta}}_{q}\varrho_{+}+[\upsilon_{+},{\dot{\Delta}}_{q}]\cdot\nabla\varrho_{+}
-\frac
{\gamma-1}{2}([{\dot{\Delta}}_{q},\varrho_{+}]\mathrm{div}\upsilon_{+}+\varrho_{+}{\dot{\Delta}}_{q}\mathrm{div}\upsilon_{+}),\\
[2mm]
\partial_{t} {\dot{\Delta}}_{q}\varrho_{-}+{\dot{\Delta}}_{q}\mathrm{div}\upsilon_{-}\\ \hspace{5mm}=-(\upsilon_{-}\cdot\nabla){\dot{\Delta}}_{q}\varrho_{-}+[\upsilon_{-},{\dot{\Delta}}_{q}]\cdot\nabla\varrho_{-}
-\frac {\gamma-1}{2}([{\dot{\Delta}}_{q},\varrho_{-}]\mathrm{div}\upsilon_{-}+\varrho_{-}{\dot{\Delta}}_{q}\mathrm{div}\upsilon_{-}),\\[2mm]
\partial_{t} {\dot{\Delta}}_{q}\upsilon_{+}+{\dot{\Delta}}_{q}\nabla\varrho_{+}+\frac {1}{\sqrt{\gamma}}{\dot{\Delta}}_{q}\upsilon_{+}+\frac {1}{\sqrt{\gamma}}{\dot{\Delta}}_{q}\tilde{E}+\dot{\Delta}_{q}\upsilon_{+}\times\bar{B}\\ \hspace{5mm}=
-(\upsilon_{+}\cdot\nabla){\dot{\Delta}}_{q}\upsilon_{+}+[\upsilon_{+},{\dot{\Delta}}_{q}]\cdot\nabla\upsilon_{+}
-\frac
{\gamma-1}{2}([{\dot{\Delta}}_{q},\varrho_{+}]\nabla\varrho_{+}+\varrho_{+}{\dot{\Delta}}_{q}\nabla\varrho_{+})
-{\dot{\Delta}}_{q}(\upsilon_{+}\times \tilde{B}), \\[2mm]
\partial_{t} {\dot{\Delta}}_{q}\upsilon_{-}+{\dot{\Delta}}_{q}\nabla\varrho_{-}+\frac {1}{\sqrt{\gamma}}{\dot{\Delta}}_{q}\upsilon_{-}
-\frac
{1}{\sqrt{\gamma}}{\dot{\Delta}}_{q}\tilde{E}-\dot{\Delta}_{q}\upsilon_{-}\times\bar{B}\\
\hspace{5mm} =
-(\upsilon_{-}\cdot\nabla){\dot{\Delta}}_{q}\upsilon_{-}+[\upsilon_{-},{\dot{\Delta}}_{q}]\cdot\nabla\upsilon_{-}
-\frac
{\gamma-1}{2}([{\dot{\Delta}}_{q},\varrho_{-}]\nabla\varrho_{-}+\varrho_{-}{\dot{\Delta}}_{q}\nabla\varrho_{-})
+{\dot{\Delta}}_{q}(\upsilon_{-}\times \tilde{B}), \\[2mm]
\partial_{t} {\dot{\Delta}}_{q}\tilde{E}-\frac {1}{\sqrt{\gamma}}{\dot{\Delta}}_{q}\nabla\times\tilde{B}\\ \hspace{5mm}=\frac {1}{\sqrt{\gamma}}{\dot{\Delta}}_{q}\upsilon_{+}
+\frac
{1}{\sqrt{\gamma}}{\dot{\Delta}}_{q}[(\Phi(\varrho_{+})+\varrho_{+})\upsilon_{+}]-\frac
{1}{\sqrt{\gamma}}{\dot{\Delta}}_{q}\upsilon_{-}
-\frac {1}{\sqrt{\gamma}}{\dot{\Delta}}_{q}[(\Phi(\varrho_{-})+\varrho_{-})\upsilon_{-}], \\[2mm]
\partial_{t} {\dot{\Delta}}_{q}\tilde{B}+\frac {1}{\sqrt{\gamma}}{\dot{\Delta}}_{q}\nabla\times\tilde{E}=0, \\
\end{array}
\right.\label{R-E23}
\end{equation}
where the commutator $[\cdot,\cdot]$ is defined by $[f,g]=fg-gf$.

Then multiplying the first two equations of (\ref{R-E23}) by
${\dot{\Delta}}_{q}\varrho_{+},\ {\dot{\Delta}}_{q}\varrho_{-}$, the
third one by ${\dot{\Delta}}_{q}\upsilon_{+}$, the fourth one by
${\dot{\Delta}}_{q}\upsilon_{-}$, respectively, and adding the
resulting equations together after integrating them over
$\mathbb{R}^{N}$, we get
\begin{eqnarray}
&&\frac{1}{2}\frac{d}{dt}\|{\dot{\Delta}}_{q}(\varrho_{+},\varrho_{-},\upsilon_{+},\upsilon_{-})\|^2_{L^2}
+\frac{1}{\sqrt{\gamma}}\|{\dot{\Delta}}_{q}(\upsilon_{+},\upsilon_{-})\|^2_{L^2}+\frac{1}{\sqrt{\gamma}}\langle{\dot{\Delta}}_{q}\tilde{E},{\dot{\Delta}}_{q}\upsilon_{+}\rangle
-\frac{1}{\sqrt{\gamma}}\langle{\dot{\Delta}}_{q}\tilde{E},{\dot{\Delta}}_{q}\upsilon_{-}\rangle\nonumber
\\&&\hspace{35mm}={\sum_{i=+,-}}I^{i}_{1}(t)+{\sum_{i=+,-}}I^{i}_{2}(t),\label{R-E24}
\end{eqnarray}
where we have used the facts
$(\dot{\Delta}_{q}\upsilon_{\pm}\times\bar{B})\cdot\dot{\Delta}_{q}\upsilon_{\pm}=0$.
The energy functions in the right-side of (\ref{R-E24}) are defined
by
\begin{eqnarray*}I^{+}_{1}(t)&:=&\frac{1}{2}\langle\mathrm{div}\upsilon_{+},(|{\dot{\Delta}}_{q}\varrho_{+}|^2+|{\dot{\Delta}}_{q}\upsilon_{+}|^2)\rangle
+\frac{\gamma-1}{2}\langle{\dot{\Delta}}_{q}\varrho_{+},\nabla\varrho_{+}\cdot{\dot{\Delta}}_{q}\upsilon_{+}\rangle
\\&&-\langle{\dot{\Delta}}_{q}(\upsilon_{+}\times\tilde{B}),{\dot{\Delta}}_{q}\upsilon_{+}\rangle,\end{eqnarray*}
\begin{eqnarray*}I^{-}_{1}(t)&:=&\frac{1}{2}\langle\mathrm{div}\upsilon_{-},(|{\dot{\Delta}}_{q}\varrho_{-}|^2+|{\dot{\Delta}}_{q}\upsilon_{-}|^2)\rangle
+\frac{\gamma-1}{2}\langle{\dot{\Delta}}_{q}\varrho_{-},\nabla\varrho_{-}\cdot{\dot{\Delta}}_{q}\upsilon_{-}\rangle
\\&&+\langle{\dot{\Delta}}_{q}(\upsilon_{-}\times\tilde{B}),{\dot{\Delta}}_{q}\upsilon_{-}\rangle,\end{eqnarray*}
and
\begin{eqnarray*}
I^{+}_{2}(t)&:=&\langle[\upsilon_{+},{\dot{\Delta}}_{q}]\cdot\nabla\varrho_{+},{\dot{\Delta}}_{q}\varrho_{+}\rangle+\langle[\upsilon_{+},{\dot{\Delta}}_{q}]\cdot\nabla\upsilon_{+},{\dot{\Delta}}_{q}\upsilon_{+}\rangle
\\&&-\frac{\gamma-1}{2}\langle[{\dot{\Delta}}_{q},\varrho_{+}]\mathrm{div}\upsilon_{+},{\dot{\Delta}}_{q}\varrho_{+}\rangle-\frac{\gamma-1}{2}\langle[{\dot{\Delta}}_{q},\varrho_{+}]\nabla\varrho_{+},{\dot{\Delta}}_{q}\upsilon_{+}\rangle,
\end{eqnarray*}
\begin{eqnarray*}
I^{-}_{2}(t)&:=&\langle[\upsilon_{-},{\dot{\Delta}}_{q}]\cdot\nabla\varrho_{-},{\dot{\Delta}}_{q}\varrho_{-}\rangle+\langle[\upsilon_{-},{\dot{\Delta}}_{q}]\cdot\nabla\upsilon_{-},{\dot{\Delta}}_{q}\upsilon_{-}\rangle
\\&&-\frac{\gamma-1}{2}\langle[{\dot{\Delta}}_{q},\varrho_{-}]\mathrm{div}\upsilon_{-},{\dot{\Delta}}_{q}\varrho_{-}\rangle-\frac{\gamma-1}{2}\langle[{\dot{\Delta}}_{q},\varrho_{-}]\nabla\varrho_{-},{\dot{\Delta}}_{q}\upsilon_{-}\rangle.
\end{eqnarray*}

On the other hand, multiplying the fifth equation of (\ref{R-E23})
by ${\dot{\Delta}}_{q}\tilde{E}$ and the sixth one by
${\dot{\Delta}}_{q}\tilde{B}$, and adding the resulting equations
together after integrating them over $\mathbb{R}^{N}$ implies
\begin{eqnarray}&&\frac{1}{2}\frac{d}{dt}\|{\dot{\Delta}}_{q}(\tilde{E},\tilde{B})\|^2_{L^2}
-\frac{1}{\sqrt{\gamma}}\langle{\dot{\Delta}}_{q}\tilde{E},{\dot{\Delta}}_{q}\upsilon_{+}\rangle+\frac{1}{\sqrt{\gamma}}\langle{\dot{\Delta}}_{q}\tilde{E},{\dot{\Delta}}_{q}\upsilon_{-}\rangle\nonumber
\\&&\hspace{20mm}={\sum_{i=+,-}}I^{i}_{3}(t),\label{R-E25}
\end{eqnarray}
where we used the vector formula $\nabla\cdot(\vec{f}\times
\vec{g})=(\nabla\times\vec{f})\cdot\vec{g}-(\nabla\times\vec{g})\cdot\vec{f}$
and $I^{+}_{3}(t),I^{-}_{3}(t)$ are given by
$$
I^{+}_{3}(t):=\frac{1}{\sqrt{\gamma}}\langle{\dot{\Delta}}_{q}[(\Phi(\varrho_{+})+\varrho_{+})\upsilon_{+}],{\dot{\Delta}}_{q}\tilde{E}\rangle,
$$
$$
I^{-}_{3}(t):=-\frac{1}{\sqrt{\gamma}}\langle{\dot{\Delta}}_{q}[(\Phi(\varrho_{-})+\varrho_{-})\upsilon_{-}],{\dot{\Delta}}_{q}\tilde{E}\rangle.
$$

In what follows, we begin to bound these nonlinear terms. Firstly,
with the aid of Cauchy-Schwartz inequality, we have
\begin{eqnarray}
&&\int^T_{0}|I^{+}_{1}(t)|dt\nonumber
\\&\lesssim&\|(\nabla\varrho_{+},\nabla\upsilon_{+})\|_{L_{T}^2(L^{\infty})}\Big(2^{-q}\|{\dot{\Delta}}_{q}\nabla\varrho_{+}\|_{L^2_{T}(L^2)}\|{\dot{\Delta}}_{q}\varrho_{+}\|_{L^\infty_{T}(L^2)}
\nonumber
\\&&+2^{-q}\|{\dot{\Delta}}_{q}\nabla\varrho_{+}\|_{L^2_{T}(L^2)}\|{\dot{\Delta}}_{q}\upsilon_{+}\|_{L^\infty_{T}(L^2)}+\|{\dot{\Delta}}_{q}\upsilon_{+}\|_{L^2_{T}(L^2)}\|{\dot{\Delta}}_{q}\upsilon_{+}\|_{L^\infty_{T}(L^2)}\Big)
\nonumber
\\&&+\|{\dot{\Delta}}_{q}(\upsilon_{+}\times\tilde{B})\|_{L^2_{T}(L^2)}\|{\dot{\Delta}}_{q}\upsilon_{+}\|_{L^2_{T}(L^2)},\label{R-E26}
\end{eqnarray}
furthermore, multiplying the factor $2^{2qs_{c}}$ on both sides of
(\ref{R-E26}) gives
\begin{eqnarray}
&&2^{2qs_{c}}\int^T_{0}|I^{+}_{1}(t)|dt\nonumber
\\&\lesssim&c_{q}^2\|(\varrho_{+},\upsilon_{+})\|_{\widetilde{L}^\infty_{T}(\dot{B}^{s_{c}}_{2,1})}
\Big(\|\nabla\varrho_{+}\|^2_{\widetilde{L}^{2}_{T}(\dot{B}^{s_{c}-1}_{2,1})}+\|\nabla\varrho_{+}\|_{\widetilde{L}^{2}_{T}(\dot{B}^{s_{c}-1}_{2,1})}
\|\upsilon_{+}\|_{\widetilde{L}^{2}_{T}(\dot{B}^{s_{c}}_{2,1})}\nonumber
\\&&+\|\upsilon_{+}\|^2_{\widetilde{L}^{2}_{T}(\dot{B}^{s_{c}}_{2,1})}\Big)
+c_{q}^2\|\upsilon_{+}\times\tilde{B}\|_{\widetilde{L}^{2}_{T}(\dot{B}^{s_{c}}_{2,1})}\|\upsilon_{+}\|_{\widetilde{L}^{2}_{T}(\dot{B}^{s_{c}}_{2,1})},\label{R-E27}
\end{eqnarray}
where we used the embedding properties in Lemma \ref{lem2.2} and
Remark \ref{rem2.1}. Here and below, $\{c_{q}\}$ denotes some
sequence which satisfies $\|(c_{q})\|_{ {l^{1}}}\leq1$ although each
$\{c_{q}\}$ is possibly different in (\ref{R-E27}). Similarly, we
have
\begin{eqnarray}
&&2^{2qs_{c}}\int^T_{0}|I^{-}_{1}(t)|dt\nonumber
\\&\lesssim&c_{q}^2\|(\varrho_{-},\upsilon_{-})\|_{\widetilde{L}^\infty_{T}(\dot{B}^{s_{c}}_{2,1})}
\Big(\|\nabla\varrho_{-}\|^2_{\widetilde{L}^\infty_{T}(\dot{B}^{s_{c}-1}_{2,1})}+\|\nabla\varrho_{-}\|_{\widetilde{L}^\infty_{T}(\dot{B}^{s_{c}-1}_{2,1})}
\|\upsilon_{-}\|_{\widetilde{L}^{2}_{T}(\dot{B}^{s_{c}}_{2,1})}\nonumber
\\&&+\|\upsilon_{-}\|^2_{\widetilde{L}^{2}_{T}(\dot{B}^{s_{c}}_{2,1})}\Big)
+c_{q}^2\|\upsilon_{-}\times\tilde{B}\|_{\widetilde{L}^{2}_{T}(\dot{B}^{s_{c}}_{2,1})}\|\upsilon_{-}\|_{\widetilde{L}^{2}_{T}(\dot{B}^{s_{c}}_{2,1})}
.\label{R-E28}
\end{eqnarray}
Secondly, we turn to estimate the commutators occurring in
$I^{\pm}_{2}(t)$. Indeed, we arrive at
\begin{eqnarray}
&&2^{2qs_{c}}\int^T_{0}|I^{+}_{2}(t)|dt\nonumber
\\&\lesssim&2^{qs_{c}}\Big(\|[\upsilon_{+},{\dot{\Delta}}_{q}]\cdot\nabla\varrho_{+}\|_{L^2_{T}(L^2)}
+\|[\varrho_{+},{\dot{\Delta}}_{q}]\mathrm{div}\upsilon_{+}\|_{L^2_{T}(L^2)}\Big)2^{q(s_{c}-1)}\|{\dot{\Delta}}_{q}\nabla\varrho_{+}\|_{L^2_{T}(L^2)}
\nonumber
\\&&+2^{qs_{c}}\Big(\|[\upsilon_{+},{\dot{\Delta}}_{q}]\cdot\nabla\upsilon_{+}\|_{L^2_{T}(L^2)}
+\|[\varrho_{+},{\dot{\Delta}}_{q}]\nabla\varrho_{+}\|_{L^2_{T}(L^2)}\Big)2^{qs_{c}}\|{\dot{\Delta}}_{q}\upsilon_{+}\|_{L^2_{T}(L^2)}.\label{R-E29}
\end{eqnarray}
Taking advantage of the estimates of commutator in Proposition
\ref{prop2.3}, we obtain
\begin{eqnarray}
&&2^{2qs_{c}}\int^T_{0}|I^{+}_{2}(t)|dt\nonumber
\\&\lesssim& c_{q}^2\Big(\|\upsilon_{+}\|_{\widetilde{L}^\infty_{T}(\dot{B}^{s_{c}}_{2,1})}\|\nabla\varrho_{+}\|_{\widetilde{L}^{2}_{T}(\dot{B}^{s_{c}-1}_{2,1})}
+\|\varrho_{+}\|_{\widetilde{L}^\infty_{T}(\dot{B}^{s_{c}}_{2,1})}\|\nabla\upsilon_{+}\|_{\widetilde{L}^{2}_{T}(\dot{B}^{s_{c}-1}_{2,1})}\Big)
\|\nabla\varrho_{+}\|_{\widetilde{L}^{2}_{T}(\dot{B}^{s_{c}-1}_{2,1})}\nonumber
\\&&+c_{q}^2\Big(\|\upsilon_{+}\|_{\widetilde{L}^\infty_{T}(\dot{B}^{s_{c}}_{2,1})}\|\nabla\upsilon_{+}\|_{\widetilde{L}^{2}_{T}(\dot{B}^{s_{c}-1}_{2,1})}
+\|\varrho_{+}\|_{\widetilde{L}^\infty_{T}(\dot{B}^{s_{c}}_{2,1})}\|\nabla\varrho_{+}\|_{\widetilde{L}^{2}_{T}(\dot{B}^{s_{c}-1}_{2,1})}\Big)
\|\upsilon_{+}\|_{\widetilde{L}^{2}_{T}(\dot{B}^{s_{c}}_{2,1})}
\nonumber
\\&\lesssim&c_{q}^2\|(\varrho_{+},\upsilon_{+})\|_{\widetilde{L}^\infty_{T}(\dot{B}^{s_{c}}_{2,1})}
\nonumber
\\&&\hspace{5mm}\times\Big(\|\nabla\varrho_{+}\|^2_{\widetilde{L}^{2}_{T}(\dot{B}^{s_{c}-1}_{2,1})}+\|\nabla\varrho_{+}\|_{\widetilde{L}^{2}_{T}(\dot{B}^{s_{c}-1}_{2,1})}
\|\upsilon_{+}\|_{\widetilde{L}^{2}_{T}(\dot{B}^{s_{c}}_{2,1})}+\|\upsilon_{+}\|^2_{\widetilde{L}^{2}_{T}(\dot{B}^{s_{c}}_{2,1})}\Big).\label{R-E30}
\end{eqnarray}
Similarly,
\begin{eqnarray}
&&2^{2qs_{c}}\int^T_{0}|I^{-}_{2}(t)|dt\nonumber
\\&\lesssim& c_{q}^2\|(\varrho_{-},\upsilon_{-})\|_{\widetilde{L}^\infty_{T}(\dot{B}^{s_{c}}_{2,1})}
\nonumber
\\&&\hspace{5mm}\times\Big(\|\nabla\varrho_{-}\|^2_{\widetilde{L}^{2}_{T}(\dot{B}^{s_{c}-1}_{2,1})}+\|\nabla\varrho_{-}\|_{\widetilde{L}^{2}_{T}(\dot{B}^{s_{c}-1}_{2,1})}
\|\upsilon_{-}\|_{\widetilde{L}^{2}_{T}(\dot{B}^{s_{c}}_{2,1})}+\|\upsilon_{-}\|^2_{\widetilde{L}^{2}_{T}(\dot{B}^{s_{c}}_{2,1})}\Big).\label{R-E31}
\end{eqnarray}
Thirdly, the composition functions $I^{\pm}_{3}(t)$ can be estimated
as
\begin{eqnarray}
&&2^{2qs_{c}}\int^T_{0}|I^{+}_{3}(t)|dt\nonumber
\\&\lesssim&c_{q}^2\|\tilde{E}\|_{\widetilde{L}^\infty_{T}(\dot{B}^{s_{c}}_{2,1})}\Big(\|\Phi(\varrho_{+})\upsilon_{+}\|_{L^1_{T}(\dot{B}^{s_{c}}_{2,1})}
+\|\varrho_{+}\upsilon_{+}\|_{L^1_{T}(\dot{B}^{s_{c}}_{2,1})}\Big)\nonumber
\\&\lesssim&c_{q}^2\|\tilde{E}\|_{\widetilde{L}^\infty_{T}(\dot{B}^{s_{c}}_{2,1})}\Big(\|\Phi(\varrho_{+})\|_{\widetilde{L}^2_{T}(\dot{B}^{s_{c}}_{2,1})}+\|\varrho_{+}\|_{\widetilde{L}^2_{T}(\dot{B}^{s_{c}}_{2,1})}\Big)
\|\upsilon_{+}\|_{\widetilde{L}^2_{T}(\dot{B}^{s_{c}}_{2,1})}
\nonumber
\\&\lesssim&c_{q}^2\|\tilde{E}\|_{\widetilde{L}^\infty_{T}(\dot{B}^{s_{c}}_{2,1})}\|\nabla\varrho_{+}\|_{\widetilde{L}^2_{T}(\dot{B}^{s_{c}-1}_{2,1})}
\|\upsilon_{+}\|_{\widetilde{L}^2_{T}(\dot{B}^{s_{c}}_{2,1})},\label{R-E32}
\end{eqnarray}
where we used the corresponding homogeneous cases of Propositions
\ref{prop2.1}-\ref{prop2.2}. Similarly,
\begin{eqnarray}
2^{2qs_{c}}\int^T_{0}|I^{-}_{3}(t)|dt\lesssim
c_{q}^2\|\tilde{E}\|_{\widetilde{L}^\infty_{T}(\dot{B}^{s_{c}}_{2,1})}\|\nabla\varrho_{-}\|_{\widetilde{L}^2_{T}(\dot{B}^{s_{c}-1}_{2,1})}
\|\upsilon_{-}\|_{\widetilde{L}^2_{T}(\dot{B}^{s_{c}}_{2,1})}.\label{R-E33}
\end{eqnarray}

Together with the equalities (\ref{R-E24})-(\ref{R-E25}) and
inequalities (\ref{R-E27})-(\ref{R-E28}),
(\ref{R-E30})-(\ref{R-E33}), we conclude that there exists a
constant $\tilde{\mu}_{2}>0$ such that
\begin{eqnarray}
&&2^{2qs_{c}}\|{\dot{\Delta}}_{q}W(t)\|^2_{L^2}+\tilde{\mu}_{2}^22^{2qs_{c}}\|({\dot{\Delta}}_{q}\upsilon_{+},{\dot{\Delta}}_{q}\upsilon_{-})\|^2_{L^{2}_{t}(L^2)}
\nonumber
\\&\lesssim&2^{2qs_{c}}\|{\dot{\Delta}}_{q}W_{0}\|^2_{L^2}+\mbox{the right sides
of}\ \Big\{(\ref{R-E27})-(\ref{R-E28}),
(\ref{R-E30})-(\ref{R-E33})\Big\}.\label{R-E34}
\end{eqnarray}
Then it follows from the classical Young's inequality($\sqrt{fg}\leq
(f+g)/2,\ f,g\geq0$) that
\begin{eqnarray}
&&2^{qs_{c}}\|{\dot{\Delta}}_{q}W\|_{L^{\infty}_{T}(L^2)}+\tilde{\mu}_{2}2^{qs_{c}}\|({\dot{\Delta}}_{q}\upsilon_{+},{\dot{\Delta}}_{q}\upsilon_{-})\|_{L^{2}_{T}(L^2)}
\nonumber
\\&\lesssim&2^{qs_{c}}\|{\dot{\Delta}}_{q}W_{0}\|_{L^2}+c_{q}\sqrt{\|W\|_{\widetilde{L}_{T}^{\infty}(\dot{B}^{s_{c}}_{2,1})}}
\Big(\|\nabla\varrho_{\pm}\|_{\widetilde{L}_{T}^2(\dot{B}^{s_{c}-1}_{2,1})}+\|\upsilon_{\pm}\|_{\widetilde{L}_{T}^2(\dot{B}^{s_{c}}_{2,1})}\Big).
\label{R-E35}
\end{eqnarray}
Hence, summing up (\ref{R-E35}) on $q\in \mathbb{Z}$ gives
immediately
\begin{eqnarray}
&&\|W\|_{\widetilde{L}^{\infty}_{T}(\dot{B}^{s_{c}}_{2,1})}+\tilde{\mu}_{2}\|(\upsilon_{+},\upsilon_{-})\|_{\widetilde{L}^{2}_{T}(\dot{B}^{s_{c}}_{2,1})}
\nonumber
\\&\lesssim&\|W_{0}\|_{\dot{B}^{s_{c}}_{2,1}}+\sqrt{\|W\|_{\widetilde{L}_{T}^{\infty}(\dot{B}^{s_{c}}_{2,1})}}
\Big(\|(\nabla\varrho_{+},\nabla\varrho_{-})\|_{\widetilde{L}_{T}^2(\dot{B}^{s_{c}-1}_{2,1})}+\|(\upsilon_{+},\upsilon_{-})\|_{\widetilde{L}_{T}^2(\dot{B}^{s_{c}}_{2,1})}\Big).
\label{R-E36}
\end{eqnarray}\\

\noindent\textit{\underline{Step 2. The $L^2_{T}(L^2)$ estimates of
$(\upsilon_{+},\upsilon_{-})$}}.

It follows from (\ref{R-E6}) and usual energy methods, we can get
the equalities
\begin{eqnarray}
&&\frac{1}{2}\frac{d}{dt}\|(\varrho_{+},\varrho_{-},\upsilon_{+},\upsilon_{-})\|^2_{L^2}+\frac{1}{\sqrt{\gamma}}\|(\upsilon_{+},\upsilon_{-})\|^2_{L^2}
+\frac{1}{\sqrt{\gamma}}\langle\tilde{E},\upsilon_{+}\rangle-\frac{1}{\sqrt{\gamma}}\langle\tilde{E},\upsilon_{-}\rangle\nonumber
\\&=&\frac{\gamma-1}{2}\langle\varrho_{+},\upsilon_{+}\cdot\nabla\varrho_{+}\rangle+\frac{\gamma-1}{2}\langle\varrho_{-},\upsilon_{-}\cdot\nabla\varrho_{-}\rangle
-\langle\nabla\upsilon_{+},\upsilon_{+}^2\rangle-\langle\nabla\upsilon_{-},\upsilon_{-}^2\rangle\nonumber
\\&&-\langle\upsilon_{+}\times\tilde{B},\upsilon_{+}\rangle+\langle\upsilon_{-}\times\tilde{B},\upsilon_{-}\rangle, \label{R-E37}
\end{eqnarray}
and
\begin{eqnarray}
&&\frac{1}{2}\frac{d}{dt}\|(\tilde{E},\tilde{B})\|^2_{L^2}-\frac{1}{\sqrt{\gamma}}\langle\tilde{E},\upsilon_{+}\rangle+\frac{1}{\sqrt{\gamma}}\langle\tilde{E},\upsilon_{-}\rangle\nonumber
\\&=&\frac{1}{\sqrt{\gamma}}\langle[(\Phi(\varrho_{+})+\varrho_{+})\upsilon_{+}],\tilde{E}\rangle
-\frac{1}{\sqrt{\gamma}}\langle[(\Phi(\varrho_{-})+\varrho_{-})\upsilon_{-}],\tilde{E}\rangle.
\label{R-E38}
\end{eqnarray}
Combine (\ref{R-E37}) and (\ref{R-E38}) to get
\begin{eqnarray}
&&\frac{d}{dt}\|W\|^2_{L^2}+\frac{2}{\sqrt{\gamma}}\|(\upsilon_{+},\upsilon_{-})\|^2_{L^2}\nonumber
\\&\lesssim&\|(\varrho_{+},\varrho_{-})\|_{L^\infty}\Big(\|\upsilon_{+}\|_{L^2}\|\nabla\varrho_{+}\|_{L^2}
+\|\upsilon_{-}\|_{L^2}\|\nabla\varrho_{-}\|_{L^2}\Big)\nonumber
\\&&+\|(\nabla\upsilon_{+},\nabla\upsilon_{-},\tilde{B})\|_{L^\infty}\Big(\|\upsilon_{+}\|^2_{L^2}
+\|\upsilon_{-}\|^2_{L^2}\Big)\nonumber
\\&&+\|(\Phi(\varrho_{+}),\varrho_{+},\Phi(\varrho_{-}),\varrho_{-})\|_{L^\infty}\Big(\|\upsilon_{+}\|_{L^2}
+\|\upsilon_{-}\|_{L^2}\Big)\|\tilde{E}\|_{L^2}.\label{R-E39}
\end{eqnarray}
By integrating (\ref{R-E39}) with respect to $t\in[0,T]$, we obtain
\begin{eqnarray}
&&\|W\|_{L^\infty_{T}(L^2)}+(2\gamma^{-\frac{1}{2}})^{\frac{1}{2}}\|(\upsilon_{+},\upsilon_{-})\|_{L^2_{T}(L^2)}
\nonumber
\\&\lesssim&\|W_{0}\|_{L^2}+\sqrt{\|(\varrho_{+},\varrho_{-},\nabla\upsilon_{+},\nabla\upsilon_{-},\tilde{B},\Phi(\varrho_{+}),\Phi(\varrho_{-}))\|_{L^\infty_{T}(L^\infty)}}
\nonumber
\\&&\hspace{5mm}\times\Big(\|(\nabla\varrho_{+},\nabla\varrho_{-})\|_{L^2_{T}(L^2)}+\|(\upsilon_{+},\upsilon_{-},\tilde{E})\|_{L^2_{T}(L^2)}\Big)
\nonumber
\\&\lesssim&\|W_{0}\|_{L^2}+\sqrt{\|(\varrho_{\pm},\upsilon_{\pm},\tilde{B})\|_{\widetilde{L}_{T}^{\infty}(B^{s_{c}}_{2,1})}}\nonumber
\\&&\hspace{5mm}\times\Big(\|(\nabla\varrho_{+},\nabla\varrho_{-})\|_{L^2_{T}(L^2)}
+\|(\upsilon_{+},\upsilon_{-})\|_{L^2_{T}(L^2)}+\|\tilde{E}\|_{\widetilde{L}_{T}^{2}(B^{s_{c}-1}_{2,1})}\Big).
\label{R-E40}
\end{eqnarray}\\

\noindent\textit{\underline{Step 3. The
$\widetilde{L}^2_{T}(B^{s_{c}}_{2,1})$ estimates of
$(\upsilon_{+},\upsilon_{-})$}}.

Recently, Xu and Kawashima \cite{XK} obtained an elementary fact
that indicates the connection between the homogeneous
Chemin-Lerner's spaces and inhomogeneous Chemin-Lerner's spaces.
Precisely, it reads as follows: let $s>0, 1\leq\theta, p,
r\leq+\infty$. When $\theta\geq r$, it holds that
\begin{eqnarray}L^{\theta}_{T}(L^p)\cap\widetilde{L}^{\theta}_{T}(\dot{B}^{s}_{p,r})=\widetilde{L}^{\theta}_{T}(B^{s}_{p,r})\label{R-E41}\end{eqnarray}
for any $T>0$. Notice this fact and (1) in Lemma \ref{lem2.2}, the
inequality (\ref{R-E22}) directly follows from (\ref{R-E36}) and
(\ref{R-E40}) with
$\mu_{2}=\min(\tilde{\mu}_{2},(2\gamma^{-\frac{1}{2}})^{\frac{1}{2}})$.
\end{proof}
\begin{lem}\label{lem4.2}
If $W\in\widetilde{\mathcal{C}}_{T}(B^{s_{c}}_{2,1})\cap
\widetilde{\mathcal{C}}^1_{T}(B^{s_{c}-1}_{2,1})$ is a solution of
(\ref{R-E6})-(\ref{R-E7}) for any $T>0$, then the following estimate
holds:
\begin{eqnarray}
&&\|\varrho_{+}-\varrho_{-}\|_{\widetilde{L}^2_{T}(B^{s_{c}}_{2,1})}+\|(\nabla\varrho_{+},\nabla\varrho_{-})\|_{\widetilde{L}^2_{T}(B^{s_{c}-1}_{2,1})}\nonumber
\\&\lesssim&(\|(\varrho_{\pm},\upsilon_{\pm})\|_{\widetilde{L}^\infty_{T}(B^{s_{c}}_{2,1})}+\|(\varrho_{\pm0},\upsilon_{\pm0})\|_{B^{s_{c}}_{2,1}})
+\|(\upsilon_{+},\upsilon_{-})\|_{\widetilde{L}^2_{T}(B^{s_{c}}_{2,1})}\nonumber
\\&&+\|(\varrho_{\pm},\upsilon_{\pm},\tilde{B})\|_{\widetilde{L}^\infty_{T}(B^{s_{c}}_{2,1})}
\Big(\|(\nabla\varrho_{\pm},\varrho_{+}-\varrho_{-})\|_{\widetilde{L}^2_{T}(B^{s_{c}-1}_{2,1})}+\|(\upsilon_{+},\upsilon_{-})\|_{\widetilde{L}^2_{T}(B^{s_{c}}_{2,1})}\Big).
\label{R-E42}
\end{eqnarray}
\end{lem}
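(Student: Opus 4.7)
The plan is to follow the three-step architecture of Lemma~\ref{lem4.1}: derive the two new dissipation bounds at the $\dot{\Delta}_{q}$-block level in homogeneous Besov spaces, obtain their $L^{2}$-in-space counterparts, and finally combine them via the identity (\ref{R-E41}) into the inhomogeneous Chemin--Lerner norms appearing in (\ref{R-E42}). Since the symmetric form (\ref{R-E9}) provides no direct damping on either $\varrho_{+}$, $\varrho_{-}$ or their difference, the two dissipation rates on the left of (\ref{R-E42}) will be pulled out through two distinct cross-term (hypocoercivity-type) energy identities; the Poisson constraint in (\ref{R-E6}) is the key ingredient that furnishes the damping of $\varrho_{+}-\varrho_{-}$.

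For the dissipation of $\nabla\varrho_{\pm}$ at regularity $s_{c}-1$, I would differentiate the cross product $\langle\dot{\Delta}_{q}\upsilon_{\pm},\dot{\Delta}_{q}\nabla\varrho_{\pm}\rangle$ in time. Using the localized momentum equations in (\ref{R-E23}), the pressure term produces the coercive contribution $-\|\dot{\Delta}_{q}\nabla\varrho_{\pm}\|_{L^{2}}^{2}$; the piece $\langle\dot{\Delta}_{q}\upsilon_{\pm},\nabla\partial_{t}\dot{\Delta}_{q}\varrho_{\pm}\rangle$, after integration by parts and use of the continuity equations, yields $+\|\dot{\Delta}_{q}\mathrm{div}\,\upsilon_{\pm}\|_{L^{2}}^{2}$, which is already dominated by the velocity dissipation from Lemma~\ref{lem4.1}. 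The remaining linear couplings with $\tilde{E}$, $\upsilon_{\pm}\times\bar{B}$ and the relaxation $\upsilon_{\pm}/\sqrt{\gamma}$ are split off by Young's inequality, while the quadratic sources (convection, the pressure nonlinearity and the Lorentz force $\upsilon_{\pm}\times\tilde{B}$) are controlled by the commutator and product estimates of Propositions~\ref{prop2.1}--\ref{prop2.3}, producing the nonlinear multiplier $\|(\varrho_{\pm},\upsilon_{\pm},\tilde{B})\|_{\widetilde{L}^{\infty}_{T}(B^{s_{c}}_{2,1})}$ on the right of (\ref{R-E42}).

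For the dissipation of $\varrho_{+}-\varrho_{-}$ at regularity $s_{c}$, I would work with the difference of the two localized momentum equations and consider the cross product $\langle\dot{\Delta}_{q}(\upsilon_{+}-\upsilon_{-}),\nabla\dot{\Delta}_{q}(\varrho_{+}-\varrho_{-})\rangle$. Three contributions are decisive: the $\nabla(\varrho_{+}-\varrho_{-})$-term still yields $-\|\dot{\Delta}_{q}\nabla(\varrho_{+}-\varrho_{-})\|_{L^{2}}^{2}$; the coupling $\frac{2}{\sqrt{\gamma}}\tilde{E}$, once integrated by parts against $\nabla\dot{\Delta}_{q}(\varrho_{+}-\varrho_{-})$ and combined with the Poisson relation $\sqrt{\gamma}\,\nabla\cdot\tilde{E}=\varrho_{-}-\varrho_{+}+\Phi(\varrho_{-})-\Phi(\varrho_{+})$, becomes $-\frac{2}{\gamma}\|\dot{\Delta}_{q}(\varrho_{+}-\varrho_{-})\|_{L^{2}}^{2}$ plus a composition remainder; and the time-derivative $\partial_{t}\dot{\Delta}_{q}\nabla(\varrho_{+}-\varrho_{-})$, handled through the difference of continuity equations, contributes a $\|\dot{\Delta}_{q}\mathrm{div}\,(\upsilon_{+}-\upsilon_{-})\|_{L^{2}}^{2}$-term again absorbable by Lemma~\ref{lem4.1}. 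Using $\|\dot{\Delta}_{q}\nabla(\varrho_{+}-\varrho_{-})\|_{L^{2}}\simeq 2^{q}\|\dot{\Delta}_{q}(\varrho_{+}-\varrho_{-})\|_{L^{2}}$, weighting by $2^{q(s_{c}-1)}$ and summing in $q\in\mathbb{Z}$ yields exactly the $\widetilde{L}^{2}_{T}(\dot{B}^{s_{c}}_{2,1})$-control of $\varrho_{+}-\varrho_{-}$ demanded by (\ref{R-E42}).

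The remaining work is standard: integrate both cross-term identities in time, proceed as in the derivation of (\ref{R-E35}) to extract the square root, and sum in $q$ with a Cauchy--Schwarz step on the boundary pieces $|\langle\dot{\Delta}_{q}\cdot,\dot{\Delta}_{q}\cdot\rangle|_{0}^{T}$ so that they collapse to $\|(\varrho_{\pm},\upsilon_{\pm})\|_{\widetilde{L}^{\infty}_{T}(B^{s_{c}}_{2,1})}+\|(\varrho_{\pm0},\upsilon_{\pm0})\|_{B^{s_{c}}_{2,1}}$, and finally pass from homogeneous to inhomogeneous Chemin--Lerner norms via (\ref{R-E41}) together with a parallel $L^{2}$-in-space energy estimate for the low-frequency part (analogous to Step~2 in the proof of Lemma~\ref{lem4.1}). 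The main technical obstacle is the composition remainder $\Phi(\varrho_{-})-\Phi(\varrho_{+})$ coming from the Poisson constraint, which is precisely what forces the term $\|\varrho_{+}-\varrho_{-}\|_{\widetilde{L}^{2}_{T}(B^{s_{c}-1}_{2,1})}$ on the right of (\ref{R-E42}): writing $\Phi(\varrho_{-})-\Phi(\varrho_{+})=-(\varrho_{+}-\varrho_{-})\int_{0}^{1}\Phi'\bigl(\theta\varrho_{+}+(1-\theta)\varrho_{-}\bigr)\,d\theta$ and using $\Phi(0)=\Phi'(0)=0$ in combination with the Chemin--Lerner composition estimate of Proposition~\ref{prop2.2} produces a bound of the form $\|\varrho_{\pm}\|_{\widetilde{L}^{\infty}_{T}(B^{s_{c}}_{2,1})}\|\varrho_{+}-\varrho_{-}\|_{\widetilde{L}^{2}_{T}(B^{s_{c}-1}_{2,1})}$, matching exactly the nonlinear structure required.
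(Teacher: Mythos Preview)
There is a genuine gap in your first cross-term identity. When you differentiate $\langle\dot{\Delta}_{q}\upsilon_{\pm},\dot{\Delta}_{q}\nabla\varrho_{\pm}\rangle$ in time, the electric-field contribution $\mp\frac{1}{\sqrt{\gamma}}\langle\dot{\Delta}_{q}\tilde{E},\dot{\Delta}_{q}\nabla\varrho_{\pm}\rangle$ cannot simply be ``split off by Young's inequality'': doing so produces a term $\|\tilde{E}\|_{\widetilde{L}^{2}_{T}(B^{s_{c}-1}_{2,1})}$ on the right-hand side, and no such term appears in (\ref{R-E42}). The paper's remedy is to integrate this coupling by parts and substitute the Poisson constraint from (\ref{R-E6}), obtaining for each sign
\[
\mp\frac{1}{\sqrt{\gamma}}\langle\Delta_{q}\tilde{E},\Delta_{q}\nabla\varrho_{\pm}\rangle
=\frac{1}{\gamma}\|\Delta_{q}\varrho_{\pm}\|_{L^{2}}^{2}
-\frac{1}{\gamma}\langle\Delta_{q}\varrho_{\mp},\Delta_{q}\varrho_{\pm}\rangle
+\ (\Phi\text{-remainders}),
\]
and then \emph{adding} the $+$ and $-$ identities so that the four density terms combine exactly into $\frac{1}{\gamma}\|\Delta_{q}(\varrho_{+}-\varrho_{-})\|_{L^{2}}^{2}$; see (\ref{R-E48})--(\ref{R-E50}). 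Thus a single summed identity delivers both the damping of $(\nabla\varrho_{+},\nabla\varrho_{-})$ and of $\varrho_{+}-\varrho_{-}$ at regularity $s_{c}-1$, with no $\tilde{E}$-contribution left over.

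This also changes the architecture relative to your plan. Since the summed identity already yields $\|\varrho_{+}-\varrho_{-}\|_{\widetilde{L}^{2}_{T}(B^{s_{c}-1}_{2,1})}$, the paper does not introduce your second cross-term $\langle\dot{\Delta}_{q}(\upsilon_{+}-\upsilon_{-}),\nabla\dot{\Delta}_{q}(\varrho_{+}-\varrho_{-})\rangle$ at all; the upgrade to regularity $s_{c}$ is obtained \emph{a posteriori} by a short embedding argument (Claim~2): $\nabla\varrho_{\pm}\in\widetilde{L}^{2}_{T}(B^{s_{c}-1}_{2,1})$ gives $\varrho_{+}-\varrho_{-}\in\widetilde{L}^{2}_{T}(\dot{B}^{s_{c}}_{2,1})$ via Bernstein, the low frequencies come from the $B^{s_{c}-1}_{2,1}$ bound just proved, and (\ref{R-E41}) closes. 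Moreover the paper works throughout with the \emph{inhomogeneous} blocks $\Delta_{q}$ directly, rather than the three-step homogeneous/$L^{2}$/(\ref{R-E41}) route you borrow from Lemma~\ref{lem4.1}. Your treatment of $\Phi(\varrho_{+})-\Phi(\varrho_{-})$ is correct and is precisely what the paper packages as Proposition~\ref{prop5.1} and applies in (\ref{R-E57}).
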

\begin{proof}
The proof is divided into two claims for clarity.\\

\noindent\textit{\underline{Claim 1.}} Under the assumptions of
Lemma \ref{lem4.2}, it holds that
\begin{eqnarray}
&&\|(\nabla\varrho_{+},\nabla\varrho_{-})\|_{\widetilde{L}^2_{T}(B^{s_{c}-1}_{2,1})}+\|\varrho_{+}-\varrho_{-}\|_{\widetilde{L}^2_{T}(B^{s_{c}-1}_{2,1})}\nonumber
\\&\lesssim&(\|(\varrho_{\pm},\upsilon_{\pm})\|_{\widetilde{L}^\infty_{T}(B^{s_{c}}_{2,1})}+\|(\varrho_{\pm0},\upsilon_{\pm0})\|_{B^{s_{c}}_{2,1}})
+\|(\upsilon_{+},\upsilon_{-})\|_{\widetilde{L}^2_{T}(B^{s_{c}}_{2,1})}\nonumber
\\&&+\|(\varrho_{\pm},\upsilon_{\pm},\tilde{B})\|_{\widetilde{L}^\infty_{T}(B^{s_{c}}_{2,1})}
\Big(\|(\nabla\varrho_{\pm},\varrho_{+}-\varrho_{-})\|_{\widetilde{L}^2_{T}(B^{s_{c}-1}_{2,1})}
+\|(\upsilon_{+},\upsilon_{-})\|_{\widetilde{L}^2_{T}(B^{s_{c}}_{2,1})}\Big).
\label{R-E43}
\end{eqnarray}
To do this, the first four equations of (\ref{R-E6}) can be
rewritten as
\begin{equation}
\partial_{t} \varrho_{+}+\mathrm{div}\upsilon_{+}=f^{+}_{1}\label{R-E44}
\end{equation}
\begin{equation}
\partial_{t} \varrho_{-}+\mathrm{div}\upsilon_{-}=f^{-}_{1}\label{R-E45}
\end{equation}
\begin{equation}
\partial_{t} \upsilon_{+}+\nabla\varrho_{+}+\frac{1}{\sqrt{\gamma}}\upsilon_{+}+\frac{1}{\sqrt{\gamma}}\tilde{E}=f^{+}_{2}\label{R-E46}
\end{equation}
\begin{equation}
\partial_{t} \upsilon_{-}+\nabla\varrho_{-}+\frac{1}{\sqrt{\gamma}}\upsilon_{-}-\frac{1}{\sqrt{\gamma}}\tilde{E}=f^{-}_{2}\label{R-E47}
\end{equation}
where
$$\left\{
\begin{array}{l}
f^{+}_{1}=-\upsilon_{+}\cdot\nabla\varrho_{+}-\frac{\gamma-1}{2}\varrho_{+}\mathrm{div}\upsilon_{+},\\
f^{-}_{1}=-\upsilon_{-}\cdot\nabla\varrho_{-}-\frac{\gamma-1}{2}\varrho_{-}\mathrm{div}\upsilon_{-},\\
f^{+}_{2}=-\upsilon_{+}\cdot\nabla\upsilon_{+}-\frac{\gamma-1}{2}\varrho_{+}\nabla\varrho_{+}-\upsilon_{+}\times(\tilde{B}+\bar{B}),\\
f^{-}_{2}=-\upsilon_{-}\cdot\nabla\upsilon_{-}-\frac{\gamma-1}{2}\varrho_{-}\nabla\varrho_{-}+\upsilon_{-}\times(\tilde{B}+\bar{B}).
\end{array}\right.$$

Applying the \textit{inhomogeneous} operator $\Delta_{q}(q\geq-1)$
to (\ref{R-E46}) and multiplying the resulting equality by
$\Delta_{q}\nabla\varrho_{+}$ implies
\begin{eqnarray}
&&\frac{d}{dt}\langle\Delta_{q}\upsilon_{+},\Delta_{q}\nabla\varrho_{+}\rangle+\|\Delta_{q}\nabla\varrho_{+}\|^2_{L^2}+\frac{1}{\gamma}\|\Delta_{q}\varrho_{+}\|^2_{L^2}
-\frac{1}{\gamma}\langle\Delta_{q}\varrho_{-},\Delta_{q}\varrho_{+}\rangle\nonumber
\\&=&-\frac{1}{\gamma}\langle\Delta_{q}\Phi(\varrho_{+}),\Delta_{q}\varrho_{+}\rangle+\frac{1}{\gamma}\langle\Delta_{q}\Phi(\varrho_{-}),\Delta_{q}\varrho_{+}\rangle
+\langle\Delta_{q}f^{+}_{2},\Delta_{q}\nabla\varrho_{+}\rangle\nonumber
\\&&-\frac{1}{\sqrt{\gamma}}\langle\Delta_{q}\upsilon_{+},\Delta_{q}\nabla\varrho_{+}\rangle+\|\Delta_{q}\mathrm{div}\upsilon_{+}\|^2_{L^2}
-\langle\Delta_{q}f^{+}_{1},\Delta_{q}\mathrm{div}\upsilon_{+}\rangle,\label{R-E48}
\end{eqnarray}
where we used the last equation of (\ref{R-E6}) and (\ref{R-E44}).

In a similar way as above, we have
\begin{eqnarray}
&&\frac{d}{dt}\langle\Delta_{q}\upsilon_{-},\Delta_{q}\nabla\varrho_{-}\rangle+\|\Delta_{q}\nabla\varrho_{-}\|^2_{L^2}+\frac{1}{\gamma}\|\Delta_{q}\varrho_{-}\|^2_{L^2}
-\frac{1}{\gamma}\langle\Delta_{q}\varrho_{+},\Delta_{q}\varrho_{-}\rangle\nonumber
\\&=&-\frac{1}{\gamma}\langle\Delta_{q}\Phi(\varrho_{-}),\Delta_{q}\varrho_{-}\rangle+\frac{1}{\gamma}\langle\Delta_{q}\Phi(\varrho_{+}),\Delta_{q}\varrho_{-}\rangle
+\langle\Delta_{q}f^{-}_{2},\Delta_{q}\nabla\varrho_{-}\rangle\nonumber
\\&&-\frac{1}{\sqrt{\gamma}}\langle\Delta_{q}\upsilon_{-},\Delta_{q}\nabla\varrho_{-}\rangle+\|\Delta_{q}\mathrm{div}\upsilon_{-}\|^2_{L^2}
-\langle\Delta_{q}f^{-}_{1},\Delta_{q}\mathrm{div}\upsilon_{-}\rangle.\label{R-E49}\end{eqnarray}
Furthermore, we add (\ref{R-E48}) to (\ref{R-E49}) to get
\begin{eqnarray}
&&\frac{d}{dt}(\langle\Delta_{q}\upsilon_{+},\Delta_{q}\nabla\varrho_{+}\rangle+\langle\Delta_{q}\upsilon_{-},\Delta_{q}\nabla\varrho_{-}\rangle)
+\|(\Delta_{q}\nabla\varrho_{+},\Delta_{q}\nabla\varrho_{-})\|^2_{L^2}+\frac{1}{\gamma}\|\Delta_{q}\varrho_{+}-\Delta_{q}\varrho_{-}\|^2_{L^2}\nonumber
\\&=&\|\Delta_{q}\mathrm{div}\upsilon_{+}\|^2_{L^2}+\|\Delta_{q}\mathrm{div}\upsilon_{-}\|^2_{L^2}
+\frac{1}{\gamma}\langle\Delta_{q}\Phi(\varrho_{+}),\Delta_{q}\varrho_{-}-\Delta_{q}\varrho_{+}\rangle+\langle\Delta_{q}f^{+}_{2},\Delta_{q}\nabla\varrho_{+}\rangle\nonumber
\\&&+\langle\Delta_{q}f^{-}_{2},\Delta_{q}\nabla\varrho_{-}\rangle+\frac{1}{\gamma}\langle\Delta_{q}\Phi(\varrho_{-}),\Delta_{q}\varrho_{+}-\Delta_{q}\varrho_{-}\rangle
-\frac{1}{\sqrt{\gamma}}\langle\Delta_{q}\upsilon_{+},\Delta_{q}\nabla\varrho_{+}\rangle\nonumber
\\&&-\frac{1}{\sqrt{\gamma}}\langle\Delta_{q}\upsilon_{-},\Delta_{q}\nabla\varrho_{-}\rangle
-\langle\Delta_{q}f^{+}_{1},\Delta_{q}\mathrm{div}\upsilon_{+}\rangle-\langle\Delta_{q}f^{-}_{1},\Delta_{q}\mathrm{div}\upsilon_{-}\rangle.\label{R-E50}
\end{eqnarray}
From Young's inequality, there exists a constant $\mu_{3}>0$ such
that
\begin{eqnarray}
&&\frac{d}{dt}(\langle\Delta_{q}\upsilon_{+},\Delta_{q}\nabla\varrho_{+}\rangle+\langle\Delta_{q}\upsilon_{-},\Delta_{q}\nabla\varrho_{-}\rangle)
+\mu_{3}^2(\|(\Delta_{q}\nabla\varrho_{+},\Delta_{q}\nabla\varrho_{-})\|^2_{L^2}+\|\Delta_{q}\varrho_{+}-\Delta_{q}\varrho_{-}\|^2_{L^2})\nonumber
\\&\lesssim&\|(\Delta_{q}\mathrm{div}\upsilon_{+},\Delta_{q}\mathrm{div}\upsilon_{-})\|^2_{L^2}
+\|(\Delta_{q}\upsilon_{+},\Delta_{q}\upsilon_{-})\|^2_{L^2}+\|\Delta_{q}(\Phi(\varrho_{+})-\Phi(\varrho_{-}))\|^2_{L^2}\nonumber
\\&&+\Big(\|\Delta_{q}f^{+}_{1}\|^2_{L^2}+\|\Delta_{q}f^{+}_{2}\|^2_{L^2}\Big)+\Big(\|\Delta_{q}f^{-}_{1}\|^2_{L^2}+\|\Delta_{q}f^{-}_{2}\|^2_{L^2}\Big).\label{R-E51}
\end{eqnarray}

By integrating (\ref{R-E51}) with respect to $t\in[0,T]$, and
multiplying the factor $2^{2q(s_{c}-1)}$ on both sides of the
resulting inequality, we obtain
\begin{eqnarray}
&&\mu_{3}2^{q(s_{c}-1)}(\|(\Delta_{q}\nabla\varrho_{+},\Delta_{q}\nabla\varrho_{-})\|_{L^2_{T}(L^2)}+\|\Delta_{q}\varrho_{+}-\Delta_{q}\varrho_{-}\|_{L^2_{T}(L^2)})\nonumber
\\&\lesssim&c_{q}(\|(\varrho_{\pm},\upsilon_{\pm})\|_{\widetilde{L}_{T}^{\infty}(L^2)}
+\|(\varrho_{\pm0},\upsilon_{\pm0})\|_{B^{s_{c}}_{2,1}})\nonumber
\\&&+c_{q}\Big(\|(\upsilon_{+},\upsilon_{-})\|_{\widetilde{L}^2_{T}(B^{s_{c}}_{2,1})}
+\|\Phi(\varrho_{+})-\Phi(\varrho_{-})\|_{\widetilde{L}^2_{T}(B^{s_{c}-1}_{2,1})}\nonumber
\\&&+\|f^{+}_{1}\|_{\widetilde{L}^2_{T}(B^{s_{c}-1}_{2,1})}
+\|f^{+}_{2}\|_{\widetilde{L}^2_{T}(B^{s_{c}-1}_{2,1})}+\|f^{-}_{1}\|_{\widetilde{L}^2_{T}(B^{s_{c}-1}_{2,1})}+\|f^{-}_{2}\|_{\widetilde{L}^2_{T}(B^{s_{c}-1}_{2,1})}\Big).
\label{R-E52}
\end{eqnarray}
Now we estimate nonlinear terms in the right-side of (\ref{R-E52})
in turn. Firstly,
\begin{eqnarray}
\|f^{+}_{1}\|_{\widetilde{L}^2_{T}(B^{s_{c}-1}_{2,1})}&\lesssim&\|\upsilon_{+}\cdot\nabla\varrho_{+}\|_{\widetilde{L}^2_{T}(B^{s_{c}-1}_{2,1})}
+\|\varrho_{+}\mathrm{div}\upsilon_{+}\|_{\widetilde{L}^2_{T}(B^{s_{c}-1}_{2,1})}\nonumber\\&\lesssim&
\|\upsilon_{+}\|_{\widetilde{L}^\infty_{T}(B^{s_{c}}_{2,1})}\|\nabla\varrho_{+}\|_{\widetilde{L}^2_{T}(B^{s_{c}-1}_{2,1})}
+\|\varrho_{+}\|_{\widetilde{L}^\infty_{T}(B^{s_{c}}_{2,1})}\|\|\upsilon_{+}\|_{\widetilde{L}^2_{T}(B^{s_{c}}_{2,1})},\label{R-E53}
\end{eqnarray}
where we used Proposition \ref{prop2.1} and Remark \ref{rem2.1}.
Similarly, we have
\begin{eqnarray}
\|f^{-}_{1}\|_{\widetilde{L}^2_{T}(B^{s_{c}-1}_{2,1})}\lesssim\|\upsilon_{-}\|_{\widetilde{L}^\infty_{T}(B^{s_{c}}_{2,1})}\|\nabla\varrho_{-}\|_{\widetilde{L}^2_{T}(B^{s_{c}-1}_{2,1})}
+\|\varrho_{-}\|_{\widetilde{L}^\infty_{T}(B^{s_{c}}_{2,1})}\|\|\upsilon_{-}\|_{\widetilde{L}^2_{T}(B^{s_{c}}_{2,1})};\label{R-E54}
\end{eqnarray}
\begin{eqnarray}
\|f^{+}_{2}\|_{\widetilde{L}^2_{T}(B^{s_{c}-1}_{2,1})}&\lesssim&\|\upsilon_{+}\cdot\nabla\upsilon_{+}\|_{\widetilde{L}^2_{T}(B^{s_{c}-1}_{2,1})}
+\|\varrho_{+}\nabla\varrho_{+}\|_{\widetilde{L}^2_{T}(B^{s_{c}-1}_{2,1})}
+\|\upsilon_{+}\times(\tilde{B}+\bar{B})\|_{\widetilde{L}^2_{T}(B^{s_{c}-1}_{2,1})}\nonumber\\&\lesssim&
\|\upsilon_{+}\|_{\widetilde{L}^\infty_{T}(B^{s_{c}}_{2,1})}\|\upsilon_{+}\|_{\widetilde{L}^2_{T}(B^{s_{c}}_{2,1})}
+\|\varrho_{+}\|_{\widetilde{L}^\infty_{T}(B^{s_{c}}_{2,1})}\|\nabla\varrho_{+}\|_{\widetilde{L}^2_{T}(B^{s_{c}-1}_{2,1})}\nonumber\\&&+
(1+\|\tilde{B}\|_{\widetilde{L}^\infty_{T}(B^{s_{c}}_{2,1})})\|\upsilon_{+}\|_{\widetilde{L}^2_{T}(B^{s_{c}}_{2,1})};\label{R-E55}
\end{eqnarray}
\begin{eqnarray}
\|f^{-}_{2}\|_{\widetilde{L}^2_{T}(B^{s_{c}-1}_{2,1})}&\lesssim&
\|\upsilon_{-}\|_{\widetilde{L}^\infty_{T}(B^{s_{c}}_{2,1})}\|\upsilon_{-}\|_{\widetilde{L}^2_{T}(B^{s_{c}}_{2,1})}
+\|\varrho_{-}\|_{\widetilde{L}^\infty_{T}(B^{s_{c}}_{2,1})}\|\nabla\varrho_{-}\|_{\widetilde{L}^2_{T}(B^{s_{c}-1}_{2,1})}\nonumber\\&&+
(1+\|\tilde{B}\|_{\widetilde{L}^\infty_{T}(B^{s_{c}}_{2,1})})\|\upsilon_{-}\|_{\widetilde{L}^2_{T}(B^{s_{c}}_{2,1})}.\label{R-E56}
\end{eqnarray}

To estimate the continuity of compositions
$\Phi(\varrho_{+})-\Phi(\varrho_{-})$, we need the further estimates
rather than that in Proposition \ref{prop2.2}. Indeed, the
Proposition \ref{prop5.1} in Appendix \ref{sec:5} will be used,
which is a natural generalization about the corresponding stationary
case in \cite{BCD}. In addition, we recall that $\Phi(\rho)$ is a
smooth function on the domain $\{\rho|\frac {\gamma-1}{2}\rho+1>0\}$
satisfying $\Phi'(0)=0$. Hence, take $s=s_{c}-1, \theta=2,
\theta_1=\theta_4=2, \theta_2=\theta_3=\infty, p=2, r=1$ in
(\ref{R-E74}) to get
\begin{eqnarray}
&&\|\Phi(\varrho_{+})-\Phi(\varrho_{-})\|_{\widetilde{L}^2_{T}(B^{s_{c}-1}_{2,1})}\nonumber\\&\lesssim&
\|\varrho_{+}-\varrho_{-}\|_{L^2_{T}(L^\infty)}(\|\varrho_{+}\|_{\widetilde{L}^\infty_{T}(B^{s_{c}-1}_{2,1})}
+\|\varrho_{-}\|_{\widetilde{L}^\infty_{T}(B^{s_{c}-1}_{2,1})})\nonumber\\&&+
\|\varrho_{+}-\varrho_{-}\|_{\widetilde{L}^2_{T}(B^{s_{c}-1}_{2,1})}(\|\varrho_{+}\|_{L^\infty_{T}(L^\infty)}
+\|\varrho_{-}\|_{L^\infty_{T}(L^\infty)})
\nonumber\\&\lesssim&\|\varrho_{+}-\varrho_{-}\|_{\widetilde{L}^2_{T}(B^{s_{c}-1}_{2,1})}(\|\varrho_{+}\|_{\widetilde{L}^\infty_{T}(B^{s_{c}}_{2,1})}
+\|\varrho_{-}\|_{\widetilde{L}^\infty_{T}(B^{s_{c}}_{2,1})}).\label{R-E57}
\end{eqnarray}
Combining (\ref{R-E52})-(\ref{R-E57}), we are led to the estimate
\begin{eqnarray}
&&2^{q(s_{c}-1)}\Big(\|(\Delta_{q}\nabla\varrho_{+},\Delta_{q}\nabla\varrho_{-})\|_{L^2_{T}(L^2)}+\|\Delta_{q}\varrho_{+}-\Delta_{q}\varrho_{-}\|_{L^2_{T}(L^2)}\Big)\nonumber
\\&\lesssim&c_{q}(\|(\varrho_{\pm},\upsilon_{\pm})\|_{\widetilde{L}_{T}^{\infty}(L^2)}
+\|(\varrho_{\pm0},\upsilon_{\pm0})\|_{B^{s_{c}}_{2,1}})+c_{q}\|(\upsilon_{+},\upsilon_{-})\|_{\widetilde{L}^2_{T}(B^{s_{c}}_{2,1})}\nonumber
\\&&+c_{q}\|(\varrho_{\pm},\upsilon_{\pm},\tilde{B})\|_{\widetilde{L}^\infty_{T}(B^{s_{c}}_{2,1})}
\Big(\|(\nabla\varrho_{\pm},\varrho_{+}-\varrho_{-})\|_{\widetilde{L}^2_{T}(B^{s_{c}-1}_{2,1})}
+\|(\upsilon_{+},\upsilon_{-})\|_{\widetilde{L}^2_{T}(B^{s_{c}}_{2,1})}\Big).\label{R-E58}
\end{eqnarray}
Summing up (\ref{R-E58}) on $q\geq-1$, the inequality (\ref{R-E43})
is followed easily.

Next, we give the reason that
$\varrho_{+}-\varrho_{-}\in\widetilde{L}^2_{T}(B^{s_{c}}_{2,1})$.\\

 \noindent\textit{\underline{Claim 2.}}
If
$\varrho_{+}-\varrho_{-}\in\widetilde{L}^2_{T}(B^{s_{c}-1}_{2,1}),(\nabla\varrho_{+},\nabla\varrho_{-})\in\widetilde{L}^2_{T}(B^{s_{c}-1}_{2,1})$,
then
\begin{eqnarray*}
\varrho_{+}-\varrho_{-}\in\widetilde{L}^2_{T}(B^{s_{c}}_{2,1})
\end{eqnarray*}
and
\begin{eqnarray}
\|\varrho_{+}-\varrho_{-}\|_{\widetilde{L}^2_{T}(B^{s_{c}}_{2,1})}\lesssim(\|(\nabla\varrho_{+},\nabla\varrho_{-})\|_{\widetilde{L}^2_{T}(B^{s_{c}-1}_{2,1})}
+\|\varrho_{+}-\varrho_{-}\|_{\widetilde{L}^2_{T}(B^{s_{c}-1}_{2,1})}).\label{R-E59}
\end{eqnarray}

Indeed, by virtue of the triangle inequality, one has
\begin{eqnarray}\|\nabla(\varrho_{+}-\varrho_{-})\|_{\widetilde{L}^2_{T}(B^{s_{c}-1}_{2,1})}\lesssim\|(\nabla\varrho_{+},\nabla\varrho_{-})\|_{\widetilde{L}^2_{T}(B^{s_{c}-1}_{2,1})},\label{R-E60}\end{eqnarray}
which implies
$\nabla(\varrho_{+}-\varrho_{-})\in\widetilde{L}^2_{T}(B^{s_{c}-1}_{2,1})$,
furthermore, it follows from the fact (\ref{R-E41}) that
$\nabla(\varrho_{+}-\varrho_{-})\in\widetilde{L}^2_{T}(\dot{B}^{s_{c}-1}_{2,1})$.
According to Bernstein's inequality (Lemma \ref{lem2.1}), we obtain
\begin{eqnarray}\|\varrho_{+}-\varrho_{-}\|_{\widetilde{L}^2_{T}(\dot{B}^{s_{c}}_{2,1})}\lesssim\|\nabla(\varrho_{+}-\varrho_{-})\|_{\widetilde{L}^2_{T}(\dot{B}^{s_{c}-1}_{2,1})}
\lesssim\|\nabla(\varrho_{+}-\varrho_{-})\|_{\widetilde{L}^2_{T}(B^{s_{c}-1}_{2,1})}.\label{R-E61}\end{eqnarray}
On the other hand, thanks to the embeddings
$$\widetilde{L}^2_{T}(B^{s_{c}-1}_{2,1})\hookrightarrow L^2_{T}(B^{s_{c}-1}_{2,1})\hookrightarrow L^2_{T}(B^{s_{c}-1}_{2,2})\hookrightarrow L^2_{T}(L^2),$$
we deduce that
\begin{eqnarray}\|\varrho_{+}-\varrho_{-}\|_{L^2_{T}(L^2)}\lesssim\|\varrho_{+}-\varrho_{-}\|_{\widetilde{L}^2_{T}(B^{s_{c}-1}_{2,1})}.\label{R-E62}\end{eqnarray}
Then from the basic fact (\ref{R-E41}), the inequality (\ref{R-E59})
is achieved by (\ref{R-E61}) and (\ref{R-E62}) directly.

Finally, (\ref{R-E42}) follows from (\ref{R-E43}) and (\ref{R-E59}),
which completes the proof of Lemma \ref{lem4.2}.
\end{proof}

\begin{lem}\label{lem4.3}
If $W\in\widetilde{\mathcal{C}}_{T}(B^{s_{c}}_{2,1})\cap
\widetilde{\mathcal{C}}^1_{T}(B^{s_{c}-1}_{2,1})$ is a solution of
(\ref{R-E6})-(\ref{R-E7}) for any $T>0$, then the following estimate
holds:
\begin{eqnarray}
&&\|\tilde{E}\|_{\widetilde{L}^2_{T}(B^{s_{c}-1}_{2,1})}\nonumber
\\&\lesssim&\|(\upsilon_{\pm},\tilde{E})\|_{\widetilde{L}^{\infty}_{T}(B^{s_{c}}_{2,1})}
+\|(\upsilon_{\pm0},\tilde{E}_{0})\|_{B^{s_{c}}_{2,1}}+\Big\{\|(\upsilon_{+},\upsilon_{-})\|_{\widetilde{L}^2_{T}(B^{s_{c}}_{2,1})}\nonumber
\\&&+\|\nabla\tilde{B}\|_{\widetilde{L}^2_{T}(B^{s_{c}-2}_{2,1})}\
+\|(\varrho_{+}-\varrho_{-})\|_{\widetilde{L}^2_{T}(B^{s_{c}}_{2,1})}
+\sqrt{\|(\varrho_{\pm},\upsilon_{\pm},\tilde{B})\|_{\widetilde{L}^{\infty}_{T}(B^{s_{c}}_{2,1})}}\nonumber
\\&&\hspace{5mm}\times\Big(\|(\nabla\varrho_{+},\nabla\varrho_{-})\|_{\widetilde{L}^2_{T}(B^{s_{c}-1}_{2,1})}+\|(\upsilon_{+},\upsilon_{-})\|_{\widetilde{L}^2_{T}(B^{s_{c}}_{2,1})}
+\|\tilde{E}\|_{\widetilde{L}^2_{T}(B^{s_{c}-1}_{2,1})}\Big)\Big\},\label{R-E63}
\end{eqnarray}
\end{lem}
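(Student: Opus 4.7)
The dissipation of $\tilde{E}$ is not visible on the diagonal of the energy identity (\ref{R-E24})--(\ref{R-E25}), so the plan is to recover it through a cross-term functional built from the \emph{difference} of the two momentum equations. Subtracting (\ref{R-E47}) from (\ref{R-E46}) gives
\begin{equation}
\partial_{t}(\upsilon_{+}-\upsilon_{-})+\nabla(\varrho_{+}-\varrho_{-})+\frac{1}{\sqrt{\gamma}}(\upsilon_{+}-\upsilon_{-})+\frac{2}{\sqrt{\gamma}}\tilde{E}=f_{2}^{+}-f_{2}^{-},\label{plan-diff}
\end{equation}
which is the identity that contains $\tilde{E}$ as a pure damping term with coefficient $2/\sqrt{\gamma}$.

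First I would apply the inhomogeneous block $\Delta_{q}$ ($q\ge-1$) to (\ref{plan-diff}) and take the $L^{2}$ inner product with $\Delta_{q}\tilde{E}$. After moving the time derivative by the Leibniz rule, this produces
\begin{equation*}
\tfrac{2}{\sqrt{\gamma}}\|\Delta_{q}\tilde{E}\|_{L^{2}}^{2}
=-\tfrac{d}{dt}\langle\Delta_{q}(\upsilon_{+}-\upsilon_{-}),\Delta_{q}\tilde{E}\rangle
+\langle\Delta_{q}(\upsilon_{+}-\upsilon_{-}),\Delta_{q}\partial_{t}\tilde{E}\rangle+\mathcal{R}_{q},
\end{equation*}
where $\mathcal{R}_{q}$ collects the inner products against $\Delta_{q}\nabla(\varrho_{+}-\varrho_{-})$, $\tfrac{1}{\sqrt{\gamma}}\Delta_{q}(\upsilon_{+}-\upsilon_{-})$, and $\Delta_{q}(f_{2}^{+}-f_{2}^{-})$ with $\Delta_{q}\tilde{E}$. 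The crucial step is then to eliminate $\Delta_{q}\partial_{t}\tilde{E}$ via the Amp\`ere--Maxwell equation (the fifth equation of (\ref{R-E6})); this replaces it by $\tfrac{1}{\sqrt{\gamma}}\Delta_{q}\nabla\times\tilde{B}+\tfrac{1}{\sqrt{\gamma}}\Delta_{q}(\upsilon_{+}-\upsilon_{-})+$ (nonlinear transport of $\Phi(\varrho_{\pm})+\varrho_{\pm}$ against $\upsilon_{\pm}$).

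Next I would integrate in $t\in[0,T]$ and use Cauchy--Schwarz plus Young's inequality to absorb the multiplier $\tfrac{2}{\sqrt{\gamma}}\|\Delta_{q}\tilde{E}\|_{L^{2}}^{2}$ on the left. The boundary term $\langle\Delta_{q}(\upsilon_{+}-\upsilon_{-})(\cdot),\Delta_{q}\tilde{E}(\cdot)\rangle\big|_{0}^{T}$ is controlled by $\|(\upsilon_{\pm},\tilde{E})\|_{\widetilde{L}^{\infty}_{T}(B^{s_{c}}_{2,1})}^{2}+\|(\upsilon_{\pm0},\tilde{E}_{0})\|_{B^{s_{c}}_{2,1}}^{2}$. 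Multiplying by $2^{2q(s_{c}-1)}$, taking the square root (handled exactly as in the passage from (\ref{R-E34}) to (\ref{R-E35}) via $\sqrt{fg}\leq(f+g)/2$) and summing in $q\ge-1$ gives the desired $\widetilde{L}^{2}_{T}(B^{s_{c}-1}_{2,1})$ bound on $\tilde{E}$. Two points deserve attention in the summation: the term $\langle\Delta_{q}(\upsilon_{+}-\upsilon_{-}),\Delta_{q}\nabla\times\tilde{B}\rangle$ is estimated by distributing the derivative, so that $2^{q(s_{c}-1)}\|\Delta_{q}\nabla\tilde{B}\|_{L^{2}_{T}(L^{2})}$ appears with one derivative landing on $\tilde{B}$ at the $B^{s_{c}-2}_{2,1}$ level (this explains the $\|\nabla\tilde{B}\|_{\widetilde{L}^{2}_{T}(B^{s_{c}-2}_{2,1})}$ loss on the right-hand side of (\ref{R-E63})); and the term $\langle\Delta_{q}\nabla(\varrho_{+}-\varrho_{-}),\Delta_{q}\tilde{E}\rangle$ is integrated by parts, yielding the contribution $\|\varrho_{+}-\varrho_{-}\|_{\widetilde{L}^{2}_{T}(B^{s_{c}}_{2,1})}$ after Young.

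The remaining work is to bound the nonlinear pieces. The quadratic terms in $f_{2}^{+}-f_{2}^{-}$ and in the transported densities $[\Phi(\varrho_{\pm})+\varrho_{\pm}]\upsilon_{\pm}$ are controlled by Proposition \ref{prop2.1}, with one factor in $\widetilde{L}^{\infty}_{T}(B^{s_{c}}_{2,1})$ and the other in $\widetilde{L}^{2}_{T}(B^{s_{c}}_{2,1})$ or $\widetilde{L}^{2}_{T}(B^{s_{c}-1}_{2,1})$; the composition differences $\Phi(\varrho_{+})-\Phi(\varrho_{-})$ are handled by Proposition \ref{prop5.1} as already used in (\ref{R-E57}). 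I expect the main obstacle to be bookkeeping: each cross term must be placed at exactly the regularity index (either $s_{c}-1$ for $\tilde{E}$ or $s_{c}-2$ for $\nabla\tilde{B}$) needed to match the norms on the right-hand side of (\ref{R-E63}), and one has to ensure via the square-root trick that only $\sqrt{\|(\varrho_{\pm},\upsilon_{\pm},\tilde{B})\|_{\widetilde{L}^{\infty}_{T}(B^{s_{c}}_{2,1})}}$ --- not its full power --- multiplies the dissipation terms, so that the smallness assumption (\ref{R-E19}) will eventually allow Proposition \ref{prop4.1} to close.
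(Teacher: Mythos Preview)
Your proposal is correct and follows the paper's proof essentially step for step: both subtract the two momentum equations to obtain (\ref{R-E64}), test against $\Delta_{q}\tilde{E}$, move the time derivative by Leibniz and replace $\partial_{t}\tilde{E}$ via the Amp\`ere--Maxwell equation, then multiply by $2^{2q(s_{c}-1)}$, integrate in $t$, apply Young, and sum over $q\ge-1$. One minor remark: in the paper the $\Phi(\varrho_{\pm})$ contributions arising from the Amp\`ere law appear separately in $J_{1}(t)$ (not as a difference), so Proposition~\ref{prop2.2} already suffices there and your invocation of Proposition~\ref{prop5.1} is unnecessary for this lemma.
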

\begin{proof}
In fact, from (\ref{R-E46}) and (\ref{R-E47}), we have
\begin{equation}
\partial_{t} (\upsilon_{+}-\upsilon_{-})+(\nabla\varrho_{+}-\nabla\varrho_{-})
+\frac{2}{\sqrt{\gamma}}\tilde{E}=f^{+}_{2}-f^{-}_{2}-\frac{1}{\sqrt{\gamma}}(\upsilon_{+}-\upsilon_{-})\label{R-E64}
\end{equation}
Applying the inhomogeneous localization operator $\Delta_{q}$ to
(\ref{R-E64}), multiplying the resulting inequality by
$\Delta_{q}\tilde{E}$ and integrating it over $\mathbb{R}^{N}$ gives
\begin{eqnarray}
\frac{d}{dt}\langle\Delta_{q}(\upsilon_{+}-\upsilon_{-}),\Delta_{q}\tilde{E}\rangle+\frac{2}{\sqrt{\gamma}}\|\Delta_{q}\tilde{E}\|^2_{L^2}
=\sum_{i=1}^{2}J_{i}(t),\label{R-E65}
\end{eqnarray}
where
\begin{eqnarray*}J_{1}(t):&=&\langle\Delta_{q}(\upsilon_{+}-\upsilon_{-}),\Delta_{q}\partial_{t}\tilde{E}\rangle\nonumber
\\&=&\frac{1}{\sqrt{\gamma}}\|\Delta_{q}(\upsilon_{+}-\upsilon_{-})\|^2_{L^2}
+\frac{1}{\sqrt{\gamma}}\langle\Delta_{q}(\upsilon_{+}-\upsilon_{-}),\Delta_{q}(\nabla\times\tilde{B})\rangle\nonumber
\\&&+\frac{1}{\sqrt{\gamma}}\langle\Delta_{q}(\upsilon_{+}-\upsilon_{-}),\Delta_{q}[\Phi(\varrho_{+})\upsilon_{+}+\varrho_{+}\upsilon_{+}]\rangle
\nonumber
\\&&-\frac{1}{\sqrt{\gamma}}\langle\Delta_{q}(\upsilon_{+}-\upsilon_{-}),\Delta_{q}[\Phi(\varrho_{-})\upsilon_{-}+\varrho_{-}\upsilon_{-}]\rangle
\end{eqnarray*}
and
\begin{eqnarray*}
J_{2}(t):&=&-\frac{1}{\sqrt{\gamma}}\langle\Delta_{q}(\upsilon_{+}-\upsilon_{-}),\Delta_{q}\tilde{E}\rangle
-\langle\Delta_{q}(\nabla\varrho_{+}-\nabla\varrho_{-}),\Delta_{q}\tilde{E}\rangle\nonumber
\\&&+
\langle\Delta_{q}(f^{+}_{2}-f^{-}_{2}),\Delta_{q}\tilde{E}\rangle.
\end{eqnarray*}
Through the straight but a little tedious calculations, with the aid
of Propositions \ref{prop2.1}-\ref{prop2.2}, we can obtain
\begin{eqnarray}
&&2^{2q(s_{c}-1)}\int_{0}^{T}|J_{1}(t)|dt\nonumber
\\&\lesssim&c_{q}^2\|(\upsilon_{+},\upsilon_{-})\|^2_{\widetilde{L}^2_{T}(B^{s_{c}-1}_{2,1})}+c_{q}^2\|(\upsilon_{+},\upsilon_{-})\|_{\widetilde{L}^2_{T}(B^{s_{c}}_{2,1})}
\|\nabla\times\tilde{B}\|_{\widetilde{L}^2_{T}(B^{s_{c}-2}_{2,1})}\nonumber
\\&&+c_{q}^2\|(\varrho_{+},\varrho_{-})\|_{\widetilde{L}^\infty_{T}(B^{s_{c}-1}_{2,1})}\|(\upsilon_{+},\upsilon_{-})\|^2_{\widetilde{L}^2_{T}(B^{s_{c}-1}_{2,1})}\label{R-E66}
\end{eqnarray}
and
\begin{eqnarray}
&&2^{2q(s_{c}-1)}\int_{0}^{T}|J_{2}(t)|dt\nonumber
\\&\lesssim&c_{q}^2\Big(\|(\upsilon_{+},\upsilon_{-})\|_{\widetilde{L}^2_{T}(B^{s_{c}-1}_{2,1})}+\|(\varrho_{+}-\varrho_{-})\|_{\widetilde{L}^2_{T}(B^{s_{c}}_{2,1})}\Big)
\|\tilde{E}\|_{\widetilde{L}^2_{T}(B^{s_{c}-1}_{2,1})}
+c_{q}^2\|(\varrho_{\pm},\upsilon_{\pm},\tilde{B})\|_{\widetilde{L}^{\infty}_{T}(B^{s_{c}-1}_{2,1})}
\nonumber
\\&&\hspace{5mm}\times\Big(\|(\nabla\varrho_{+},\nabla\varrho_{-})\|_{\widetilde{L}^2_{T}(B^{s_{c}-1}_{2,1})}
+\|(\upsilon_{+},\upsilon_{-})\|_{\widetilde{L}^2_{T}(B^{s_{c}}_{2,1})}\Big)\|\tilde{E}\|_{\widetilde{L}^2_{T}(B^{s_{c}-1}_{2,1})}.
\label{R-E67}
\end{eqnarray}
Then, combining with (\ref{R-E65})-(\ref{R-E67}), we arrive at
\begin{eqnarray}
&&2^{2q(s_{c}-1)}\|\Delta_{q}\tilde{E}\|^2_{L^2_{T}(L^2)}\nonumber
\\&\lesssim&c_{q}^2\Big(\|(\upsilon_{+},\upsilon_{-},\tilde{E})\|^2_{\widetilde{L}^{\infty}_{T}(B^{s_{c}}_{2,1})}
+\|(\upsilon_{+0},\upsilon_{-0},\tilde{E}_{0})\|^2_{B^{s_{c}}_{2,1})}\Big)
+c_{q}^2\|(\upsilon_{+},\upsilon_{-})\|^2_{\widetilde{L}^2_{T}(B^{s_{c}-1}_{2,1})}\nonumber
\\&&+c_{q}^2\|(\upsilon_{+},\upsilon_{-})\|_{\widetilde{L}^2_{T}(B^{s_{c}}_{2,1})}
\|\nabla\times\tilde{B}\|_{\widetilde{L}^2_{T}(B^{s_{c}-2}_{2,1})}+c_{q}^2\Big(\|(\upsilon_{+},\upsilon_{-})\|_{\widetilde{L}^2_{T}(B^{s_{c}-1}_{2,1})}
\nonumber
\\&&+\|(\varrho_{+}-\varrho_{-})\|_{\widetilde{L}^2_{T}(B^{s_{c}}_{2,1})}\Big)
\|\tilde{E}\|_{\widetilde{L}^2_{T}(B^{s_{c}-1}_{2,1})}+c_{q}^2\|(\varrho_{+},\varrho_{-})\|_{\widetilde{L}^\infty_{T}(B^{s_{c}}_{2,1})}\|(\upsilon_{+},\upsilon_{-})\|^2_{\widetilde{L}^2_{T}(B^{s_{c}}_{2,1})}
\nonumber
\\&&+c_{q}^2\|(\varrho_{\pm},\upsilon_{\pm},\tilde{B})\|_{\widetilde{L}^{\infty}_{T}(B^{s_{c}}_{2,1})}
\Big(\|\nabla\varrho_{\pm}\|_{\widetilde{L}^2_{T}(B^{s_{c}-1}_{2,1})}
+\|\upsilon_{\pm}\|_{\widetilde{L}^2_{T}(B^{s_{c}}_{2,1})}\Big)\|\tilde{E}\|_{\widetilde{L}^2_{T}(B^{s_{c}-1}_{2,1})}.
\label{R-E68}
\end{eqnarray}
Then employing Young's inequality and summing up the resulting
inequality on $q\geq-1$ concludes the inequality (\ref{R-E40}).
\end{proof}

\begin{lem}\label{lem4.4}
If $W\in\widetilde{\mathcal{C}}_{T}(B^{s_{c}}_{2,1})\cap
\widetilde{\mathcal{C}}^1_{T}(B^{s_{c}-1}_{2,1})$ is a solution of
(\ref{R-E6})-(\ref{R-E7}) for any $T>0$ and, then the following
estimate holds:
\begin{eqnarray}
&&\|\nabla\tilde{B}\|_{\widetilde{L}^2_{T}(B^{s_{c}-2}_{2,1})}
\nonumber\\&\lesssim&\|(\tilde{E},\tilde{B})\|_{\widetilde{L}^\infty_{T}(B^{s_{c}}_{2,1})}
+\|(\tilde{E}_{0},\tilde{B}_{0})\|_{B^{s_{c}}_{2,1}}
+\Big\{\|(\upsilon_{+},\upsilon_{-})\|_{\widetilde{L}^2_{T}(B^{s_{c}}_{2,1})}\nonumber
\\&&+\|\tilde{E}\|_{\widetilde{L}^2_{T}(B^{s_{c}-1}_{2,1})}+\sqrt{\|\varrho_{\pm}\|_{\widetilde{L}^\infty_{T}(B^{s_{c}}_{2,1})}}
\Big(\|(\upsilon_{+},\upsilon_{-})\|_{\widetilde{L}^2_{T}(B^{s_{c}}_{2,1})}+\|\nabla\tilde{B}\|_{\widetilde{L}^2_{T}(B^{s_{c}-2}_{2,1})}\Big)\Big\},\label{R-E69}
\end{eqnarray}
\end{lem}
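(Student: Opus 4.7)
The plan is to extract the dissipation of $\nabla\tilde{B}$ via an Amp\`ere-type cross-term, mirroring the strategy of Lemma~\ref{lem4.3} but with the roles of $\tilde{E}$ and $\nabla\times\tilde{B}$ interchanged. The third equation of \eqref{R-E6} rearranges to $\tfrac{1}{\sqrt{\gamma}}\nabla\times\tilde{B}=\partial_t\tilde{E}-G$, where
$$G:=\tfrac{1}{\sqrt{\gamma}}(\upsilon_+-\upsilon_-)+\tfrac{1}{\sqrt{\gamma}}[\Phi(\varrho_+)+\varrho_+]\upsilon_+-\tfrac{1}{\sqrt{\gamma}}[\Phi(\varrho_-)+\varrho_-]\upsilon_-,$$
so I will pair $\Delta_q\tilde{E}$ against $\Delta_q(\nabla\times\tilde{B})$ in $L^2_x$ and invoke the fourth equation of \eqref{R-E6} to handle the time derivative that falls on $\tilde{B}$. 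This should isolate $\tfrac{1}{\sqrt{\gamma}}\|\Delta_q(\nabla\times\tilde{B})\|_{L^2}^2$ as the dissipation, while producing $\tfrac{1}{\sqrt{\gamma}}\|\Delta_q(\nabla\times\tilde{E})\|_{L^2}^2$ on the other side, which is exactly at the level already controlled by the $\widetilde{L}^2_T(B^{s_c-1}_{2,1})$-bound on $\tilde{E}$ from Lemma~\ref{lem4.3}.

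Concretely, applying $\Delta_q$ ($q\ge-1$) and writing
$$\tfrac{d}{dt}\bigl\langle\Delta_q\tilde{E},\Delta_q(\nabla\times\tilde{B})\bigr\rangle=\tfrac{1}{\sqrt{\gamma}}\|\Delta_q(\nabla\times\tilde{B})\|_{L^2}^2+\bigl\langle\Delta_qG,\Delta_q(\nabla\times\tilde{B})\bigr\rangle+\bigl\langle\Delta_q\tilde{E},\partial_t\Delta_q(\nabla\times\tilde{B})\bigr\rangle,$$
I substitute $\partial_t\tilde{B}=-\tfrac{1}{\sqrt{\gamma}}\nabla\times\tilde{E}$ and use $\nabla\times(\nabla\times f)=\nabla\,\mathrm{div}\,f-\Delta f$ together with Helmholtz's identity $\|\nabla f\|_{L^2}^2=\|\mathrm{div}\,f\|_{L^2}^2+\|\nabla\times f\|_{L^2}^2$ to collapse the last term to $-\tfrac{1}{\sqrt{\gamma}}\|\Delta_q(\nabla\times\tilde{E})\|_{L^2}^2$. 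Integrating over $[0,T]$, multiplying by $2^{2q(s_c-2)}$ and summing over $q\ge-1$ then delivers the desired bound. The boundary contribution at the endpoints is absorbed into $\|(\tilde{E},\tilde{B})\|_{\widetilde{L}^\infty_T(B^{s_c}_{2,1})}^2$ and the initial-data term via $2\sqrt{ab}\le a+b$, after using Bernstein to trade $\|\Delta_q\nabla\tilde{B}\|_{L^2}$ for $2^q\|\Delta_q\tilde{B}\|_{L^2}$ on positive frequencies (and the trivial embedding on the low block $q=-1$). The linear piece $\tfrac{1}{\sqrt{\gamma}}(\upsilon_+-\upsilon_-)$ in $G$ gives $\|(\upsilon_+,\upsilon_-)\|_{\widetilde{L}^2_T(B^{s_c}_{2,1})}$ after Cauchy--Schwarz and Young; the nonlinear pieces are treated by Propositions~\ref{prop2.1}--\ref{prop2.2}, exploiting $\Phi(0)=\Phi'(0)=0$ to pull out a factor of $\varrho_\pm$ and produce the $\sqrt{\|\varrho_\pm\|_{\widetilde{L}^\infty_T(B^{s_c}_{2,1})}}$ prefactor. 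The divergence constraint $\nabla\cdot\tilde{B}=0$ finally identifies $\|\Delta_q(\nabla\times\tilde{B})\|_{L^2}$ with $\|\Delta_q\nabla\tilde{B}\|_{L^2}$ frequency-wise, so the left-hand side is exactly $\|\nabla\tilde{B}\|_{\widetilde{L}^2_T(B^{s_c-2}_{2,1})}$.

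The main obstacle is the \emph{one-derivative loss}: the trade $\nabla\times\tilde{B}\leftrightarrow\partial_t\tilde{E}$ brings the second-order operator $\nabla\times(\nabla\times\tilde{E})$ into play, which costs one derivative of $\tilde{E}$ compared with the regularity at which it is controlled in Lemma~\ref{lem4.3}; this is precisely why $\nabla\tilde{B}$ can only be recovered at $B^{s_c-2}_{2,1}$, in agreement with the ``1-regularity-loss'' phenomenon discussed in the remarks following Theorem~\ref{thm1.2}. A secondary, more technical care-point is the composition $\Phi(\varrho_\pm)\upsilon_\pm$: its quadratic vanishing at the equilibrium, guaranteed by the reformulation \eqref{R-E5}--\eqref{R-E6} enforcing $\Phi(0)=\Phi'(0)=0$, is essential in order to keep the nonlinearity compatible with the smallness ansatz \eqref{R-E19} and to produce a genuinely small prefactor rather than a loss term.
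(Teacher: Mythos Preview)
Your proposal is correct and follows essentially the same approach as the paper: the cross-term you differentiate, $\langle\Delta_q\tilde{E},\Delta_q(\nabla\times\tilde{B})\rangle$, coincides (after one integration by parts) with the paper's $-\langle\Delta_q(\nabla\times\tilde{E}),\Delta_q\tilde{B}\rangle$, and the subsequent use of the Faraday law, the divergence-free condition on $\tilde{B}$, Cauchy--Schwarz/Young, and Propositions~\ref{prop2.1}--\ref{prop2.2} for the nonlinear current mirrors the paper exactly. The only cosmetic difference is that the paper places the derivative on $\tilde{E}$ in the boundary term while you place it on $\tilde{B}$, but both are absorbed by $\|(\tilde{E},\tilde{B})\|_{\widetilde{L}^\infty_T(B^{s_c}_{2,1})}$ in the same way.
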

\begin{proof}
Indeed, multiplying both sides of the fifth equation of (\ref{R-E6})
by $-\Delta_{q}(\nabla\times\tilde{B})$, taking integrations in
$x\in\mathbb{R}^{N}$ and using integration by parts and replacing
$\partial_{t} \Delta_{q}\tilde{B}$ from the fourth equation of
(\ref{R-E6}), we arrive at
\begin{eqnarray}
&&-\frac{d}{dt}\langle\Delta_{q}(\nabla\times\tilde{E}),\Delta_{q}\tilde{B}\rangle+\frac{1}{\sqrt{\gamma}}\|\Delta_{q}(\nabla\times\tilde{B})\|^2_{L^2}
\nonumber\\&=&\frac{1}{\sqrt{\gamma}}\|\Delta_{q}(\nabla\times\tilde{E})\|^2_{L^2}
-\frac{1}{\sqrt{\gamma}}\langle\Delta_{q}\upsilon_{+},\Delta_{q}(\nabla\times\tilde{B})\rangle
+\frac{1}{\sqrt{\gamma}}\langle\Delta_{q}\upsilon_{-},\Delta_{q}(\nabla\times\tilde{B})\rangle\nonumber
\\&&-\frac{1}{\sqrt{\gamma}}\langle\Delta_{q}[\Phi(\varrho_{+})\upsilon_{+}+\varrho_{+}\upsilon_{+}]
-\Delta_{q}[\Phi(\varrho_{-})\upsilon_{-}+\varrho_{-}\upsilon_{-}],\Delta_{q}(\nabla\times\tilde{B})\rangle,\label{R-E70}
\end{eqnarray}
where we used the vector formula $\nabla\cdot(\vec{f}\times
\vec{g})=(\nabla\times\vec{f})\cdot\vec{g}-(\nabla\times\vec{g})\cdot\vec{f}$.

With the help of Cauchy-Schwartz inequality, we obtain
\begin{eqnarray}
&&-\frac{d}{dt}\langle\Delta_{q}(\nabla\times\tilde{E}),\Delta_{q}\tilde{B}\rangle+\frac{1}{\sqrt{\gamma}}\|\Delta_{q}(\nabla\times\tilde{B})\|^2_{L^2}\nonumber
\\&\leq&\frac{1}{\sqrt{\gamma}}\|\Delta_{q}(\nabla\times\tilde{E})\|^2_{L^2}
+\frac{1}{\sqrt{\gamma}}\|\Delta_{q}(\upsilon_{+},\upsilon_{-})\|_{L^2}\|\Delta_{q}(\nabla\times\tilde{B})\|_{L^2}\nonumber
\\&&+\frac{1}{\sqrt{\gamma}}\Big(\|\Delta_{q}(\Phi(\varrho_{+})\upsilon_{+})\|_{L^2}+\|\Delta_{q}(\varrho_{+}\upsilon_{+})\|_{L^2}\Big)\|\Delta_{q}(\nabla\times\tilde{B})\|_{L^2}\nonumber
\\&&+\frac{1}{\sqrt{\gamma}}\Big(\|\Delta_{q}(\Phi(\varrho_{-})\upsilon_{-})\|_{L^2}+\|\Delta_{q}(\varrho_{-}\upsilon_{-})\|_{L^2}\Big)\|\Delta_{q}(\nabla\times\tilde{B})\|_{L^2}.
\label{R-E71}
\end{eqnarray}
Note that the regularity of $\tilde{E}$ in Lemma \ref{lem4.4}, we
multiply (\ref{R-E71}) by the factor $2^{2q(s_{c}-2)}$ after
integrating (\ref{R-E71}) with respect to $t\in[0,T]$ to get
\begin{eqnarray}
&&2^{2q(s_{c}-2)}\|\Delta_{q}\nabla\tilde{B}\|^2_{L^2_{T}(L^2)}\nonumber
\\&\lesssim& c^2_{q}(\|\tilde{E}\|_{\widetilde{L}^\infty_{T}(B^{s_{c}-1}_{2,1})}
\|\tilde{B}\|_{\widetilde{L}^\infty_{T}(B^{s_{c}-2}_{2,1})}+\|\tilde{E}_{0}\|_{B^{s_{c}-1}_{2,1}}
\|\tilde{B}_{0}\|_{B^{s_{c}-2}_{2,1}})\nonumber
\\&&+c^2_{q}\Big\{\|\tilde{E}\|^2_{\widetilde{L}^2_{T}(B^{s_{c}-1}_{2,1})}+\|(\upsilon_{+},\upsilon_{-})\|_{\widetilde{L}^2_{T}(B^{s_{c}}_{2,1})}
\|\nabla\tilde{B}\|_{\widetilde{L}^2_{T}(B^{s_{c}-2}_{2,1})}\nonumber
\\&&+\|(\varrho_{+},\varrho_{-})\|_{\widetilde{L}^\infty_{T}(B^{s_{c}}_{2,1})}\|(\upsilon_{+},\upsilon_{-})\|_{\widetilde{L}^2_{T}(B^{s_{c}}_{2,1})}
\|\nabla\tilde{B}\|_{\widetilde{L}^2_{T}(B^{s_{c}-2}_{2,1})})\Big\},
\label{R-E72}
\end{eqnarray}
where we notice the incompressible property of $\tilde{B}$ and the
elementary relation $\|\nabla \vec{f}\|_{L^2}\approx\|\nabla\cdot
\vec{f}\|_{L^2}+\|\nabla\times \vec{f}\|_{L^2}.$

Furthermore, we apply Young's inequality to (\ref{R-E72}) and obtain
\begin{eqnarray}
&&2^{q(s_{c}-2)}\|\Delta_{q}\nabla\tilde{B}\|_{L^2_{T}(L^2)}\nonumber
\\&\lesssim&c_{q}(\|(\tilde{E},\tilde{B})\|_{\widetilde{L}^\infty_{T}(B^{s_{c}}_{2,1})}
+\|(\tilde{E}_{0},\tilde{B}_{0})\|_{B^{s_{c}}_{2,1}})
+c_{q}\Big\{\varepsilon\|\nabla\tilde{B}\|_{\widetilde{L}_{T}(B^{s_{c}-2}_{2,1})}+
\|(\upsilon_{+},\upsilon_{-})\|_{\widetilde{L}^2_{T}(B^{s_{c}}_{2,1})}\nonumber
\\&&+\|\tilde{E}\|_{\widetilde{L}^2_{T}(B^{s_{c}-1}_{2,1})}+\sqrt{\|(\varrho_{+},\varrho_{-})\|_{\widetilde{L}^\infty_{T}(B^{s_{c}}_{2,1})}}\Big(\|(\upsilon_{+},\upsilon_{-})\|_{\widetilde{L}^2_{T}(B^{s_{c}}_{2,1})}
+\|\nabla\tilde{B}\|_{\widetilde{L}_{T}(B^{s_{c}-2}_{2,1})}\Big)\Big\},\label{R-E73}
\end{eqnarray}
where we take $\varepsilon\leq1/2$.

Finally, after summing up (\ref{R-E45}) on $q\geq-1$, the  desired
inequality (\ref{R-E69}) is followed.
\end{proof}

With the help of Lemmas \ref{lem4.1}-\ref{lem4.4}, the inequality
(\ref{R-E20}) in Proposition \ref{prop4.1} follows, since we may
introduce some positive constants to eliminate the terms
$\|W\|_{\widetilde{L}^\infty_{T}(B^{s_{c}}_{2,1})}$,
$\|(\nabla\varrho_{+},\nabla\varrho_{+})\|_{\widetilde{L}^2_{T}(B^{s_{c}-1}_{2,1})}$,\
$\|(\upsilon_{+},\upsilon_{-})\|\|_{\widetilde{L}^2_{T}(B^{s_{c}}_{2,1})}$
and $\|\tilde{E}\|_{\widetilde{L}^2_{T}(B^{s_{c}-1}_{2,1})}$,
$\|\nabla\tilde{B}\|_{\widetilde{L}^2_{T}(B^{s_{c}-2}_{2,1})}$
arising in the right-hand sides of (\ref{R-E22}),(\ref{R-E42}),
(\ref{R-E63}) and (\ref{R-E69}). See \cite{X2} for similar details,
here, we omit them for brevity.

Having the Proposition \ref{prop3.1} and Proposition \ref{prop4.1},
Theorem \ref{thm1.2} (Global well-posedness) can be achieved by the
standard boot-strap
argument as in \cite{MN}. We give the outline of proof.\\

\noindent\textit{\underline{Proof of Theorem \ref{thm1.2}.}}

If the initial data satisfy
$\|(\varrho_{\pm0},\upsilon_{\pm0},\tilde{E}_{0},\tilde{B}_{0})\|_{B^{s_{c}}_{2,1}}\leq\frac{\delta_1}{2}$,
by Proposition \ref{prop3.1}, then we can determine a time
$T_{1}>0(T_1\leq T_{0})$ such that the local solutions of
(\ref{R-E6})-(\ref{R-E7}) exist in
$\widetilde{\mathcal{C}}_{T_{1}}(B^{s_{c}}_{2,1})$ satisfying
$\|(\varrho_{\pm},\upsilon_{\pm},\tilde{E},\tilde{B})\|_{\widetilde{L}^\infty_{T_{1}}(B^{s_{c}}_{2,1})}\leq
\delta_{1}$. Therefore from Proposition \ref{prop4.1}, the solutions
satisfy the \textit{a priori} estimate
$\|(\varrho_{\pm},\upsilon_{\pm},\tilde{E},\tilde{B})\|_{\widetilde{L}^\infty_{T_{1}}(B^{s_{c}}_{2,1})}\leq
C_{1}\|(\varrho_{\pm0},\upsilon_{\pm0},\tilde{E}_{0},\tilde{B}_{0})\|_{B^{s_{c}}_{2,1}}
\leq\frac{\delta_1}{2}$, provided
$\|(\varrho_{\pm0},\upsilon_{\pm0},\tilde{E}_{0},\tilde{B}_{0})\|_{B^{s_{c}}_{2,1}}\leq\frac{\delta_{1}}{2C_{1}}$.
So by Proposition \ref{prop3.1} again, the system
(\ref{R-E6})-(\ref{R-E7}) for $t\geq T_{1}$ with the initial data
$(\varrho_{\pm},\upsilon_{\pm},\tilde{E},\tilde{B})(T_{1})$ has a
unique solution $(\varrho_{\pm},\upsilon_{\pm},\tilde{E},\tilde{B})$
satisfying
$\|(\varrho_{\pm},\upsilon_{\pm},\tilde{E},\tilde{B})\|_{\widetilde{L}^\infty_{(T_{1},2T_{1})}(B^{s_{c}}_{2,1})}\leq\delta_{1}$,
furthermore,
$\|(\varrho_{\pm},\upsilon_{\pm},\tilde{E},\tilde{B})\|_{\widetilde{L}^\infty_{2T_{1}}(B^{s_{c}}_{2,1})}\leq\delta_{1}$.
Then by Proposition \ref{prop4.1} we have
$\|(\varrho_{\pm},\upsilon_{\pm},\tilde{E},\tilde{B})\|_{\widetilde{L}^\infty_{2T_{1}}(B^{s_{c}}_{2,1})}
\leq
C_{1}\|(\varrho_{\pm0},\upsilon_{\pm0},\tilde{E}_{0},\tilde{B}_{0})\|_{B^{s_{c}}_{2,1}}\leq\frac{\delta_1}{2}$.
Thus we can continuous the same process for $0\leq t\leq
nT_{1},n=3,4,\cdot\cdot\cdot,$ and finally get a global solution
$(\varrho_{\pm},\upsilon_{\pm},\tilde{E},\tilde{B})$ satisfies
\begin{eqnarray}&&\|(\varrho_{\pm},\upsilon_{\pm},\tilde{E},\tilde{B})\|_{\widetilde{L}^\infty(B^{s_{c}}_{2,1})}
\nonumber\\&&+\mu_{1}\Big\{\|(\varrho_{+}-\varrho_{-},\upsilon_{\pm})\|_{\widetilde{L}^2(B^{s_{c}}_{2,1})}
+\|(\nabla\varrho_{+},\nabla\varrho_{-},\tilde{E})\|_{\widetilde{L}^2(B^{s_{c}-1}_{2,1})}
+\|\nabla\tilde{B}\|_{\widetilde{L}^2(B^{s_{c}-2}_{2,1})}\Big\}
\nonumber\\&\leq&
C_{1}\|(\varrho_{\pm0},\upsilon_{\pm0},\tilde{E}_{0},\tilde{B}_{0})\|_{B^{s_{c}}_{2,1}}\leq\frac{\delta_{1}}{2}.\label{R-E21}
\end{eqnarray}
The choice of $\delta_{1}$ is sufficient to ensure that
$\frac{\gamma-1}{2}\varrho_{\pm}+1>0$. Taking
$\delta_{0}=\min(\frac{\delta_{1}}{2},\frac{\delta_{1}}{2C_{1}})$,
then it follows from Remark \ref{rem3.1} and the embedding
properties (Lemma \ref{lem2.2}) that $(n_{\pm},u_{\pm}, E,B)\in
\mathcal{C}^{1}([0,\infty)\times\mathbb{R}^{N})$ is a unique
classical solution of (\ref{R-E1})-(\ref{R-E2}) in the whole space.

For the periodic case, it suffice to prove the inequality
(\ref{R-E444}). Recall that the definition of mean value $\bar{f}$
in Theorem \ref{thm1.2}, we set $\bar{n}_{\pm0}=1$. Using the
density equations in (\ref{R-E1}), we see that $\bar{n}_{\pm}$ are
conservative quantities for all time $t>0$, so $\bar{n}_{\pm}(t)=1$.
From Poinc\'{a}re inequality (see, \textit{e.g.}, \cite{E}), we have
\begin{eqnarray}
\|n_{\pm}-1\|_{L^2_{T}(L^2(\mathbb{T}^N))}\lesssim\|\nabla
n_{\pm}\|_{L^2_{T}(L^2(\mathbb{T}^N))}\lesssim\|\nabla
n_{\pm}\|_{\widetilde{L}^2(B^{s_{c}-1}_{2,1}(\mathbb{T}^N))}.
\label{R-E74}
\end{eqnarray}
On the other hand, the Bernstein inequality (Lemma \ref{lem2.1})
implies
\begin{eqnarray}
\|n_{\pm}-1\|_{\widetilde{L}^2(\dot{B}^{s_{c}}_{2,1}(\mathbb{T}^N))}\lesssim\|\nabla
n_{\pm}\|_{\widetilde{L}^2(\dot{B}^{s_{c}-1}_{2,1}(\mathbb{T}^N))}.
\label{R-E75}
\end{eqnarray}
Apply the basic fact (\ref{R-E41}) again and get
\begin{eqnarray}
\|n_{\pm}-1\|_{\widetilde{L}^2(B^{s_{c}}_{2,1}(\mathbb{T}^N))}\lesssim\|\nabla
n_{\pm}\|_{\widetilde{L}^2(B^{s_{c}-1}_{2,1}(\mathbb{T}^N))}.\label{R-E76}
\end{eqnarray}
Hence, (\ref{R-E444}) follows from (\ref{R-E76}) and (\ref{R-E4})
readily. This completes Theorem \ref{thm1.2} eventually. $\square$

\section{Appendix}\setcounter{equation}{0} \label{sec:5}
In the last section, we present a remark on the continuity for
compositions in Chemin-Lerner spaces. The corresponding stationary
cases have been shown in \cite{BCD} (see Corollary 2.66, P.97 and
Corollary 2.91, P.105). Precisely, we have
\begin{prop}\label{prop5.1}
Let $s>0$, $1\leq p, r, \theta, \theta_1, \theta_2, \theta_3,
\theta_4\leq \infty$, $F''\in W^{[s]+1,\infty}_{loc}(I;\mathbb{R})$
with $F'(0)=0$ and $T\in (0,\infty]$.  Then
\begin{eqnarray}&&\|F(f)-F(g)\|_{\widetilde{L}^{\theta}_{T}(B^{s}_{p,r})}\nonumber\\&\lesssim&
(1+\|f\|_{L^{\infty}_{T}(L^{\infty})}+\|g\|_{L^{\infty}_{T}(L^{\infty})})^{[s]+1}\|F''\|_{W^{[s]+1,\infty}}\Big(\|f-g\|_{L^{\theta_{1}}_{T}(L^\infty)}
\nonumber\\&&\times\sup_{\kappa\in[0,1]}\|g+\kappa(f-g)\|_{\widetilde{L}^{\theta_{2}}_{T}(B^{s}_{p,r})}+\|f-g\|_{\widetilde{L}^{\theta_{4}}_{T}(B^{s}_{p,r})}
\sup_{\kappa\in[0,1]}\|g+\kappa(f-g)\|_{L^{\theta_{3}}_{T}(L^\infty)}\Big),
\label{R-E77}\end{eqnarray} where
$$\frac{1}{\theta}=\frac{1}{\theta_1}+\frac{1}{\theta_2}=\frac{1}{\theta_3}+\frac{1}{\theta_4}.$$
\end{prop}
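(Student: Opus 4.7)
\textbf{Proof proposal for Proposition \ref{prop5.1}.} The key observation, taken from the stationary argument in \cite{BCD}, is that $F(f)-F(g)$ can be written as a product of $f-g$ with a composition of type $F'(\cdot)$, and since $F'(0)=0$ the composition result of Proposition \ref{prop2.2} applies to that factor. Precisely, I would start from the pointwise identity
\[
F(f)-F(g)=(f-g)\int_{0}^{1}F'\!\bigl(g+\kappa(f-g)\bigr)\,d\kappa=: (f-g)\,H(f,g),
\]
and then use the product estimate of Proposition \ref{prop2.1} in the form
\[
\|(f-g)H(f,g)\|_{\widetilde{L}^{\theta}_T(B^{s}_{p,r})}\lesssim\|f-g\|_{L^{\theta_1}_T(L^\infty)}\|H\|_{\widetilde{L}^{\theta_2}_T(B^{s}_{p,r})}+\|H\|_{L^{\theta_3}_T(L^\infty)}\|f-g\|_{\widetilde{L}^{\theta_4}_T(B^{s}_{p,r})}.
\]
This is already the shape of the right-hand side in \eqref{R-E77}; what remains is to bound $H(f,g)$ in the two scalar/Besov norms by the corresponding $\sup_\kappa$ expressions.

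For the $\widetilde{L}^{\theta_2}_T(B^{s}_{p,r})$ norm I would pull the $\kappa$-integration outside by Minkowski's inequality (Definition \ref{defn2.3}/\ref{defn2.4} being built from $\ell^r$ of $L^\theta_T(L^p)$ norms, each of which is itself a norm), obtaining
\[
\|H(f,g)\|_{\widetilde{L}^{\theta_2}_T(B^{s}_{p,r})}\leq\int_{0}^{1}\bigl\|F'(g+\kappa(f-g))\bigr\|_{\widetilde{L}^{\theta_2}_T(B^{s}_{p,r})}d\kappa\leq\sup_{\kappa\in[0,1]}\bigl\|F'(g+\kappa(f-g))\bigr\|_{\widetilde{L}^{\theta_2}_T(B^{s}_{p,r})},
\]
and then apply Proposition \ref{prop2.2} to $F'$ (legitimate because $F'(0)=0$ and $(F')'=F''\in W^{[s]+1,\infty}_{loc}$), which yields the factor $(1+\|g+\kappa(f-g)\|_{L^\infty_T(L^\infty)})^{[s]+1}\|F''\|_{W^{[s]+1,\infty}}\|g+\kappa(f-g)\|_{\widetilde{L}^{\theta_2}_T(B^{s}_{p,r})}$. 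Using the trivial bound $\|g+\kappa(f-g)\|_{L^\infty_T L^\infty}\leq\|f\|_{L^\infty_T L^\infty}+\|g\|_{L^\infty_T L^\infty}$ produces the prefactor displayed in \eqref{R-E77}. For the $L^{\theta_3}_T(L^\infty)$ norm of $H$ the argument is much cheaper: from $F'(0)=0$ and the mean value theorem, $|F'(h)|\leq\|F''\|_{L^\infty}|h|$, so $\|H\|_{L^{\theta_3}_T(L^\infty)}\leq\|F''\|_{L^\infty}\sup_\kappa\|g+\kappa(f-g)\|_{L^{\theta_3}_T(L^\infty)}$.

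Assembling the two bounds in the product inequality gives exactly \eqref{R-E77}. The step I expect to require the most care is the Minkowski exchange of the $\kappa$-integral with the Chemin–Lerner norm when $r<\infty$ and $\theta_2<\infty$: one needs that $\kappa\mapsto F'(g+\kappa(f-g))$ is measurable into $B^s_{p,r}$ and that the triangle inequality iterates correctly through the dyadic $\ell^r$ sum followed by $L^{\theta_2}_T(L^p)$. This is routine in the stationary setting treated in \cite{BCD} but must be verified in the mixed space–time framework; once this is in place, the remaining composition and product estimates are direct applications of Propositions \ref{prop2.1}–\ref{prop2.2}.
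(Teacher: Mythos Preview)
Your proposal is correct and follows essentially the same route as the paper's own proof: the identity $F(f)-F(g)=(f-g)\int_0^1 F'(g+\kappa(f-g))\,d\kappa$, the product estimate of Proposition~\ref{prop2.1}, a Minkowski/sup-in-$\kappa$ step, and then Proposition~\ref{prop2.2} applied to $F'$ (using $F'(0)=0$) together with the trivial $L^\infty$ bound via $F''$. The only point you flag as delicate --- the exchange of the $\kappa$-integral with the Chemin--Lerner norm --- is handled in the paper exactly as you suggest, by bounding the integral by the supremum over $\kappa$.
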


\begin{proof}
Following from their suggestions in \cite{BCD}, we give the natural
generalization. Note that the classical equality
\begin{eqnarray}F(f)-F(g)=(f-g)\int_{0}^{1}F'(g+\kappa(f-g))d\kappa,\label{R-E78}\end{eqnarray} it follows
from Proposition \ref{prop2.1} and  \ref{prop2.2} that
\begin{eqnarray}
&&\|F(f)-F(g)\|_{\widetilde{L}^{\theta}_{T}(B^{s}_{p,r})}\nonumber\\&\lesssim&
\|f-g\|_{L^{\theta_{1}}_{T}(L^\infty)}\Big\|\int_{0}^{1}F'(g+\kappa(f-g))d\kappa\Big\|_{\widetilde{L}^{\theta_{2}}_{T}(B^{s}_{p,r})}
\nonumber\\&&+\Big\|\int_{0}^{1}F'(g+\kappa(f-g))d\kappa\Big\|_{L^{\theta_{3}}_{T}(L^\infty)}\|f-g\|_{\widetilde{L}^{\theta_{4}}_{T}(B^{s}_{p,r})},\label{R-E79}
\end{eqnarray}
where
\begin{eqnarray}&&\Big\|\int_{0}^{1}F'(g+\kappa(f-g))d\kappa\Big\|_{\widetilde{L}^{\theta_{2}}_{T}(B^{s}_{p,r})}\nonumber\\&\leq&\sup_{\kappa\in[0,1]}
\|F'(g+\kappa(f-g))\|_{\widetilde{L}^{\theta_{2}}_{T}(B^{s}_{p,r})}\nonumber\\&\lesssim&
\sup_{\kappa\in[0,1]}\Big((1+\|g+\kappa(f-g)\|_{L^{\infty}_{T}(L^\infty)})^{[s]+1}\|F''\|_{W^{[s]+1,\infty}}\|g+\kappa(f-g)\|_{\widetilde{L}^{\theta_{2}}_{T}(B^{s}_{p,r})}\Big)
\nonumber\\&\lesssim&(1+\|f\|_{L^{\infty}_{T}(L^\infty)}+\|g\|_{L^{\infty}_{T}(L^\infty)})^{[s]+1}\|F''\|_{W^{[s]+1,\infty}}
\sup_{\kappa\in[0,1]}\|g+\kappa(f-g)\|_{\widetilde{L}^{\theta_{2}}_{T}(B^{s}_{p,r})},\label{R-E80}
\end{eqnarray}
and
\begin{eqnarray}
&&\Big\|\int_{0}^{1}F'(g+\kappa(f-g))d\kappa\Big\|_{L^{\theta_{3}}_{T}(L^\infty)}\nonumber\\&\leq&
\sup_{\kappa\in[0,1]}
\|F'(g+\kappa(f-g))\|_{L^{\theta_{3}}_{T}(L^\infty)}\nonumber\\&\lesssim&\|F''\|_{L^\infty}\sup_{\kappa\in[0,1]}\|g+\kappa(f-g)\|_{L^{\theta_{3}}_{T}(L^\infty)}.\label{R-E81}
\end{eqnarray}
Therefore, (\ref{R-E77}) follows from (\ref{R-E79}), (\ref{R-E80})
and (\ref{R-E81}) readily.
\end{proof}

Similarly, let us also mention the case in homogeneous Chemin-Lerner
spaces.

\begin{prop}\label{prop5.2}
Let $s>0$, $1\leq p, r, \theta, \theta_1, \theta_2, \theta_3,
\theta_4\leq \infty$, $F''\in W^{[s]+1,\infty}_{loc}(I;\mathbb{R})$
with $F'(0)=0$ and $T\in (0,\infty]$. Besides, let $s<N/p$ or
$s=N/p$ and $r=1$. Then
\begin{eqnarray}&&\|F(f)-F(g)\|_{\widetilde{L}^{\theta}_{T}(\dot{B}^{s}_{p,r})}\nonumber\\&\lesssim&
(1+\|f\|_{L^{\infty}_{T}(L^{\infty})}+\|g\|_{L^{\infty}_{T}(L^{\infty})})^{[s]+1}\|F''\|_{W^{[s]+1,\infty}}\Big(\|f-g\|_{L^{\theta_{1}}_{T}(L^\infty)}
\nonumber\\&&\times\sup_{\kappa\in[0,1]}\|g+\kappa(f-g)\|_{\widetilde{L}^{\theta_{2}}_{T}(\dot{B}^{s}_{p,r})}+\|f-g\|_{\widetilde{L}^{\theta_{4}}_{T}(\dot{B}^{s}_{p,r})}
\sup_{\kappa\in[0,1]}\|g+\kappa(f-g)\|_{L^{\theta_{3}}_{T}(L^\infty)}\Big),
\label{R-E82}\end{eqnarray} where
$$\frac{1}{\theta}=\frac{1}{\theta_1}+\frac{1}{\theta_2}=\frac{1}{\theta_3}+\frac{1}{\theta_4}.$$
\end{prop}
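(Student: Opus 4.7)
The plan is to mimic the proof of Proposition \ref{prop5.1} verbatim, but replace every invocation of the inhomogeneous product/composition estimates by their homogeneous counterparts; the extra hypothesis ``$s<N/p$ or $s=N/p$ and $r=1$'' is precisely what is needed to make the composition step go through in homogeneous Besov spaces.

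First I would start from the standard identity
\[
F(f)-F(g)=(f-g)\int_{0}^{1}F'(g+\kappa(f-g))\,d\kappa,
\]
which is valid pointwise because $F\in\mathcal{C}^{1}$. Next I would apply the homogeneous analogue of Proposition \ref{prop2.1} (the product estimate in $\widetilde{L}^{\theta}_{T}(\dot B^{s}_{p,r})$, which is also standard under $s>0$) to obtain the splitting
\[
\|F(f)-F(g)\|_{\widetilde{L}^{\theta}_{T}(\dot B^{s}_{p,r})}
\lesssim \|f-g\|_{L^{\theta_1}_{T}(L^\infty)}\Big\|{\textstyle\int_{0}^{1}}F'(\cdots)\,d\kappa\Big\|_{\widetilde{L}^{\theta_2}_{T}(\dot B^{s}_{p,r})}
+\Big\|{\textstyle\int_{0}^{1}}F'(\cdots)\,d\kappa\Big\|_{L^{\theta_3}_{T}(L^\infty)}\|f-g\|_{\widetilde{L}^{\theta_4}_{T}(\dot B^{s}_{p,r})}.
\]
Then, using Minkowski's inequality in $\kappa$, both integrals are bounded by suprema over $\kappa\in[0,1]$ of $\|F'(g+\kappa(f-g))\|$ in the respective norms.

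The main (and really only) obstacle is handling $\|F'(g+\kappa(f-g))\|_{\widetilde L^{\theta_2}_T(\dot B^s_{p,r})}$. For this I would invoke the \emph{homogeneous} analogue of Proposition \ref{prop2.2}, i.e.\ the composition estimate
\[
\|G(v)\|_{\widetilde L^{\theta}_T(\dot B^s_{p,r})}
\lesssim (1+\|v\|_{L^\infty_T(L^\infty)})^{[s]+1}\|G'\|_{W^{[s]+1,\infty}}\|v\|_{\widetilde L^{\theta}_T(\dot B^s_{p,r})}
\]
applied to $G=F'$ (so that $G(0)=F'(0)=0$, which is exactly the role played by the hypothesis $F'(0)=0$ and the reason $F(0)=0$ was required in Proposition \ref{prop2.2}), together with $G'=F''\in W^{[s]+1,\infty}_{loc}$. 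This composition estimate in \emph{homogeneous} Chemin-Lerner spaces requires the structural restriction $s<N/p$, or $s=N/p$ with $r=1$, which is exactly our hypothesis; it is what ensures $\dot B^s_{p,r}\hookrightarrow L^\infty$-type bounds on the low-frequency part of $v$ so that Bony's paraproduct decomposition closes without loss. Taking suprema in $\kappa$ and bounding $\|g+\kappa(f-g)\|_{L^\infty_T(L^\infty)}\le\|f\|_{L^\infty_T(L^\infty)}+\|g\|_{L^\infty_T(L^\infty)}$ yields
\[
\sup_{\kappa\in[0,1]}\|F'(g+\kappa(f-g))\|_{\widetilde L^{\theta_2}_T(\dot B^s_{p,r})}
\lesssim (1+\|f\|_{L^\infty_T(L^\infty)}+\|g\|_{L^\infty_T(L^\infty)})^{[s]+1}\|F''\|_{W^{[s]+1,\infty}}\sup_{\kappa\in[0,1]}\|g+\kappa(f-g)\|_{\widetilde L^{\theta_2}_T(\dot B^s_{p,r})}.
\]

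For the $L^{\theta_3}_T(L^\infty)$ term the estimate is trivial: since $F'(0)=0$, a pointwise mean value bound gives $|F'(w)|\le\|F''\|_{L^\infty}|w|$, hence
\[
\Big\|{\textstyle\int_0^1}F'(g+\kappa(f-g))\,d\kappa\Big\|_{L^{\theta_3}_T(L^\infty)}
\lesssim \|F''\|_{L^\infty}\sup_{\kappa\in[0,1]}\|g+\kappa(f-g)\|_{L^{\theta_3}_T(L^\infty)}.
\]
Combining these two bounds with the product splitting, and absorbing $\|F''\|_{L^\infty}\le \|F''\|_{W^{[s]+1,\infty}}$, yields exactly the claimed inequality \eqref{R-E82}, completing the proof.
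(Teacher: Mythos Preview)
Your proposal is correct and matches exactly what the paper intends: the paper does not give a separate proof of Proposition~\ref{prop5.2} but merely writes ``Similarly, let us also mention the case in homogeneous Chemin-Lerner spaces,'' indicating that one repeats the proof of Proposition~\ref{prop5.1} with the homogeneous versions of Propositions~\ref{prop2.1}--\ref{prop2.2}, the latter being the step that requires the extra hypothesis $s<N/p$ or $s=N/p,\ r=1$. One small remark: your informal explanation of why this hypothesis is needed (``$\dot B^{s}_{p,r}\hookrightarrow L^\infty$-type bounds on the low-frequency part'') is not quite the right heuristic---the condition is rather what guarantees that $\dot B^{s}_{p,r}$ is realized in $\mathcal{S}'$ (not just $\mathcal{S}'_0$) so that the homogeneous paralinearization/composition estimate (cf.\ \cite{BCD}, Corollary~2.91) is valid---but this does not affect the logic of your proof.
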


\section*{Acknowledgments}
J.Xu is partially supported by the NSFC (11001127), China
Postdoctoral Science Foundation (20110490134) and Postdoctoral
Science Foundation of Jiangsu Province (1102057C). S. Kawashima is
partially supported by Grant-in-Aid for Scientific Research (A)
22244009. The first author (J.Xu) would like to thank Professor
R.J.Duan for his communication on the dissipative structure of
regularity-loss of Euler-Maxwell equations.

\end{document}